\def\input@path{{../}{./}}
\newtheorem{example}{Example}[section]
\newtheorem{thm}{Theorem}[section]
\newtheorem{theorem}[thm]{Theorem}
\newtheorem{lemma}[thm]{Lemma}
\newtheorem{corollary}[thm]{Corollary}
\newtheorem{definition}[thm]{Definition}
\theoremstyle{remark}
\newtheorem{remark}[thm]{Remark}
\numberwithin{equation}{section}
\newcommand{\tnorm}{\@ifstar\@tnorms\@tnorm}
\newcommand{\@tnorms}[1]{%
\left|\mkern-2.5mu\left|\mkern-2.5mu\left|
#1
\right|\mkern-2.5mu\right|\mkern-2.5mu\right|
}
\newcommand{\@tnorm}[2][]{%
\mathopen{#1|\mkern-2.5mu#1|\mkern-2.5mu#1|}
#2
\mathclose{#1|\mkern-2.5mu#1|\mkern-2.5mu#1|}
}
\newcommand{\jump}[1]{\lbrack\hspace{-1.5pt}\lbrack {#1} 
\rbrack\hspace{-1.5pt}\rbrack }
\newcommand{\jumpat}[2]{\lbrack\hspace{-1.5pt}\lbrack {#1} 
\rbrack\hspace{-1.5pt}\rbrack_{\raisebox{-2pt}{\scriptsize$#2$}} }
\newcommand{\dd}{\,{\rm d}}
\newcommand{\uH}{u_{\mathcal{T}}}
\newcommand{\vH}{v_{\mathcal{T}}}
\newcommand{\RV}[1]{\textcolor{black}{#1}}
\title[Convergence and Optimality of an AmWG]
{Convergence and optimality  of an adaptive modified weak Galerkin finite element method}
\author{Yingying Xie}
\address{School of Mathematics and Information Science, Guangzhou University, Guangzhou, China, 510006.}
\email{xieyy@gzhu.edu.cn}
\author{Shuhao Cao}
\address{Division of Computing, Analytics, and Mathematics, School of Science and Engineering, University of Missouri-Kansas City, Kansas City, MO, 64110}
\email{scao@umkc.edu}
\author{Long Chen}
\address{Department of Mathematics, University of California, 
    Irvine, Irvine, CA 92697.}
\email{chenlong@math.uci.edu}
\author{Liuqiang Zhong}
\address{School of Mathematics Sciences, South China Normal University, Guangzhou, China, 510631.}
\email[Corresponding author]{zhong@scnu.edu.cn}
\thanks{The first and fourth authors were supported in part by the National Natural Science Foundation of China (No.~12071160) and the Natural Science Foundation of Guangdong Province, China(No.~2019A1515010724). The first author was also supported in part by the National Natural Science Foundation of China (No.~12101147).  The second author was supported in part by the National Science Foundation under grants DMS-1913080 and DMS-2136075. The third author was supported in part by the National Science Foundation under grants DMS-2012465, and DMS-2136075.}
\keywords{modified weak Galerkin, adaptive methods, \emph{a posteriori} error estimation, convergence, optimality }
\subjclass{65N15, 65N30, 65N50}
\begin{document}

\begin{abstract}
An adaptive modified weak Galerkin method (AmWG) for an elliptic problem is studied in this paper, in addition to its convergence and optimality. The modified weak Galerkin bilinear form is simplified without the need of the skeletal variable, and the approximation space is chosen as the discontinuous polynomial space as in the discontinuous Galerkin method. Upon a reliable residual-based \emph{a posteriori} error estimator, an adaptive algorithm is proposed together with its convergence and quasi-optimality proved for the lowest order case. The primary tool is to bridge the connection between the modified weak Galerkin method and the Crouzeix-Raviart nonconforming finite element. Unlike the traditional convergence analysis for methods with a discontinuous polynomial approximation space, the convergence of AmWG is penalty parameter free. Numerical results are presented to support the theoretical results.
\end{abstract}
\maketitle

\section{Introduction}

Consider the following model second-order elliptic problem 
\begin{equation}
\label{eq:pb-model}
\begin{aligned}
-\nabla\cdot(A\nabla u) &= f  \quad  \mbox{in} \ \Omega,
\\
 u &= 0  \quad   \mbox{on} \ \partial\Omega,
\end{aligned}
\end{equation}
where $\Omega$ is a bounded polygonal or polyhedral domain in $\mathbb{R}^d ,d = 2,3$. Assume that an initial conforming partition $\mathcal{T}_0$ of $\Omega$ exists and for all $\tau\in\mathcal{T}_0$, the coefficient $A$ is assumed to be a piecewise constant with respect to this partition. 

Weak Galerkin (WG) is a novel numerical method for solving partial differential 
equations in which classical differential operators (such as gradient, divergence, curl) 
are approximated in a weak sense. WG method was initially introduced 
in~\cite{Wang;Ye:2013, Wang;Ye:2014Galerkin, Mu;Wang;Ye:2015} for the 
second-order elliptic problem.
Since then, the WG method has successfully found its way to many applications, for example, elliptic interface
problems~\cite{Mu;Wang;Wei;Zhao:2013}, Helmholtz equations~\cite{Mu;Wang;Ye;Zhao:2014, Mu;Wang;Ye:2015Helmholtz, Du;Zhang:2017}, biharmonic equations~\cite{Mu;Wang;Ye:2014biharmonic, Mu;Wang;Ye;Zhang:2014biharmonic}, Navier-Stokes equations~\cite{Liu;Li;Chen:2018, Hu;Mu;Ye:2019}, electromagnetic problems \cite{2015MuWangYeEtAlweak,2017ShieldsLiMachorroWeak,2022CaoWangWangnew}, and its solvers \cite{2015ChenWangWangEtAlauxiliary,2016LiXieBPX}, etc. 
In particular, Wang et al.~\cite{2014WangMalluwawaduGaoEtAlmodified} introduced a modified weak Galerkin (mWG) method for the Poisson equation. The mWG method has been successfully applied to, such as parabolic problem \cite{Gao;Wang;2014}, Signorini and obstacle problem~\cite{Zeng;Chen;Wang:2017}, and Stokes equations~\cite{Tian;Zhai;Zhang:2018}. More recently, Cui et al. generalized the mWG to biharmonic problems~\cite{2021CuiYeZhangmodified}; Li et al. showed the mWG is robust for singularly perturbed reaction-diffusion problems~\cite{2022LiChenHuangrobust}; Wang et al. presented an mWG method in a mixed form in \cite{2023WangMengZhangEtAlmodified}. For other contributions in mWG variants, we also refer the readers to \cite{2018LiChenHuangnew,2019BogrekWangSuperconvergence,2022GuoShengWangEtAlmodified,2022HussainWangAlTaweelstudy}.

The solution to \eqref{eq:pb-model} may contain singularities. To approximate problem \eqref{eq:pb-model} efficiently, the general practice is to adopt adaptivity by designing an adaptive finite element cycle through the help of the \emph{a posteriori} error estimators, a bulk marking strategy, and certain local refinement techniques.
As examples non-convergent adaptive algorithms~\cite{Chen;Dai:2002efficiency} may fail to produce the desired approximation even with additional iterations, the convergence analysis of an adaptive algorithm is of fundamental importance for ensuring that the correct approximation is obtained. It theoretically guarantees that the correct approximation will be obtained, especially if one wants to avoid the situation when more computational resources may go wasted after iterative refinements. 

The convergence theory of adaptive finite element methods is relatively mature, see~\cite{NochettoSiebert09:409} and the references therein. Nevertheless, few research results exist for the \emph{a posterior} error estimates for WG methods. Chen et al.~\cite{Chen;Wang;Ye:2014posteriori} presented the \emph{a posteriori} error estimates for second-order elliptic problems; Zhang and Chen~\cite{Zhang;Chen:2018} proposed a residual-type error estimator and proved
global upper and lower bounds of the WG method for second-order elliptic problems in a discrete $H^1$-norm; 
Li et al.~\cite{Li;Mu;Ye:2019} introduced a simple \emph{a posteriori} error estimator which can be applied to general meshes such as hybrid, polytopal and those with hanging nodes for second-order elliptic problems; Mu~\cite{Mu:2019} presented an \emph{a posteriori} error estimate for the second-order elliptic interface problems; 
Zheng and Xie~\cite{Zheng;Xie2017} discussed a residual-based \emph{a posteriori} error estimator for the Stokes problem. 
There are only a few research results for \emph{a posteriori} error estimates for mWG methods. Zhang and Lin~\cite{Zhang;Lin:2017posteriori} proposed an \emph{a posteriori} error estimator for the second-order elliptic problems. Tang et al. proposed an adaptive mWG for $\bm{H}(\mathbf{curl})$-elliptic problems in \cite{2022TangEtAlmodified}.

This paper aims to prove the optimal convergence of an AmWG algorithm for the second-order elliptic problem \eqref{eq:pb-model}. 
Different from the mWG originally introduced in~\cite{2014WangMalluwawaduGaoEtAlmodified}, we simplify the mWG as follows: for a weak function $v=\{v_0, v_b\}$, the edge/face term is not independent anymore as we choose $v_b = Q_b\{v_0\}$, i.e., $v_b$ is obtained through averaging the interior discontinuous variable $v_0$ and then projected through $Q_b$ to a one-degree-lower polynomial space. 
Compared with interior-penalty discontinuous Galerkin(IPDG) (see e.g.\cite{Arnold:1982}), the mWG is stable without choosing a sufficiently large penalty parameter. This simplification ($Q_b\{v_0\}$ opposed to $\{v_0\}$ in~\cite{2014WangMalluwawaduGaoEtAlmodified}) brings extra difficulty to the analysis of convergence. The reason is that a simple port of the workflow presented in~\cite{Bonito;Nochetto:2010Quasi-optimal}, by decomposing the discontinuous approximation space into a continuous subspace and its orthogonal complement, will introduce a penalty parameter that is not originally in the mWG discretization (c.f. \cite{2022XieEtAlConvergence}). To our best knowledge, there is no literature on the convergence of adaptive mWG methods with the skeletal variable being one degree lower than the internal variable.

To conquer this difficulty, by introducing an interpolation operator $I_\mathcal{T}$ onto the Crouzeix-Raviart 
type nonconforming finite element 
space $V^{\rm nc}(\mathcal{T})$, we bound the stabilization term and prove an \emph{a posteriori} error estimate in the discrete $H^1$-norm. One main ingredient in the convergence analysis of a standard adaptive procedure is the
orthogonality of the error to the finite element space. However, such an 
orthogonality does not hold for mWG approximations. Instead, a quasi-orthogonality result is established. Hu and Xu~\cite{Hu;Xu;2013} defined a canonical interpolation operator for the lowest Crouzeix-Raviart type 
nonconforming finite element space and established the quasi-orthogonality 
property for both the velocity and the pressure in the Stokes
problem. The main observation is that the modified weak gradient for a  
function $v_{\mathcal{T}}=\{v_0, Q_b\{v_0\}\}$ is equal to the elementwise gradient of the interpolant $I_\mathcal{T}^{^{\rm CR}} v_{0}$, namely 
$\nabla_{w} \vH=\nabla_h I_{\mathcal{T}}^{^{\rm CR}} v_0$ and we can derive the desired 
quasi-orthogonality property for the lowest order ($P_1$-$P_0$) mWG.  

Another key component to establish the optimality of the adaptive algorithm is the
localized discrete upper bound for the \emph{a posteriori} error estimator. By using a prolongation operator
introduced in~\cite{Hu;Xu;2013}, we are able to derive the discrete reliability and use it to prove the optimality of the convergence.

For the \emph{a posteriori} error analysis of mWG approximations, we mainly follow Bonito and Nochetto~\cite{Bonito;Nochetto:2010Quasi-optimal} and Chen, Wang and Ye~\cite{Chen;Wang;Ye:2014posteriori}. 
For the analysis of the convergence and the optimality of adaptive procedure, we mainly use
the Hu and Xu~\cite{Hu;Xu;2013} and Huang and Xu~\cite{Huang;Xu;2013}. We do not claim any originality on the proof of convergence and optimality.  
Instead, the main contribution of this paper is to bound the stabilization term by the element-wise residual and flux jump, as well as to establish a quasi-orthogonality and a discrete upper bound which are important ingredients on the convergence theory of adaptive finite element methods.

The rest of this paper is organized as follows. In Section \ref{se:preliminary}, the definitions of weak gradient and discrete weak gradient are introduced, as well as the modified weak Galerkin finite element spaces and the corresponding bilinear form $a_{\mathcal{T}}(\cdot, \cdot)$. 
In Section \ref{sec:error}, a residual-type error estimator is constructed, and its reliability and efficiency are shown. 
In Section \ref{sec:convergence}, we introduce an adaptive modified weak Galerkin method (AmWG) and prove its convergence and optimality. Some numerical examples are presented in Section \ref{sec:numerics} to verify the theoretical results.

\section{Notation and Preliminary}
\label{se:preliminary}
The goal of this section is to present the modified weak Galerkin (mWG) formulation for 
\eqref{eq:pb-model}. 
First, the standard weak Galerkin method is reviewed, then an mWG finite element space and the discretization thereof are introduced.

\subsection{Weak Galerkin Methods}
Given a polygonal/polyhedral element $K$ with boundary $\partial K$, the notation $v = 
\{v_0, v_b\}$ defines a weak function on $K$ such that $v_0\in L^2(K)$ and $v_b\in 
L^2(\partial K)$. Subsequently, the weak function space on $K$ is defined as
\begin{eqnarray*}
W(K) = \{v=\{v_0, v_b\}: v_0\in L^2(K), v_b\in L^2(\partial K)\}.
\end{eqnarray*}

Let $P_\ell(K)$ be the set of polynomials on $K$ with degree no more than $\ell$ ($\ell\geqslant 
1$). The discrete weak gradient operator $\nabla_{w,K,\ell} (\cdot)$ is 
defined for polynomial functions. With slight abuse of notation, when no ambiguity 
arises, we shall denote $\nabla_{w, K} (\cdot)$  as $\nabla_{w} (\cdot)$, which 
should be clear from the context. 

\begin{definition}[{\cite[Definition 2.1]{Mu;Wang;Ye:2015Galerkin}}]
\label{Def:WG}
The discrete weak gradient operator, denoted by $\nabla_{w, K, \ell}$, is defined as the 
unique polynomial $\nabla_{w, K, \ell} v \in  (P_{\ell-1} (K))^d$ satisfying the following equation for $v\in W(K)$
\begin{equation}
\big( \nabla_{w,K, \ell} v, \bm{q} \big)_K = -(v_0, \nabla\cdot \bm{q})_K + \langle v_b, \bm{q}\cdot 
\boldsymbol{n} \rangle_{\partial K},  \quad \forall q\in (P_{\ell-1} (K))^d.
\end{equation}
\end{definition}
In the definition above, $\boldsymbol{n}$ is the outward normal direction 
to $\partial K$, $(v_0, 
\nabla\cdot \bm{q})_K =\int_K v_0(\nabla\cdot \bm{q}) \, \mathrm{d} K$ is the $L^2(K)$-inner
product of $v_0$ and $\nabla\cdot \bm{q}$, and $\langle v_b, \bm{q}\cdot 
\boldsymbol{n} \rangle_{\partial K}=\int_{\partial K} v_b(\bm{q}\cdot 
\boldsymbol{n}) \, \mathrm{d} s$ is the $L^2(\partial K)$-inner produce of $v_b$ and $\bm{q}\cdot 
\boldsymbol{n}$. The differential operators involved are well-defined when restricted 
to one element. In the context of the gradient operator defined across multiple elements in $\mathcal{T}$, the elementwise gradient $\nabla_h$ is introduced, i.e., $(\nabla_h v)|_{K} := \nabla (v|_{K})$ in element $K\in \mathcal{T}$. 

In the rest of the paper, we restrict ourselves to a shape-regular triangulation $\mathcal{T}$ of $\Omega$. $\mathcal{E}$ denotes the set of all the edges or faces in $\mathcal{T}$, and $\mathcal{E}^{\mathrm{int}}$ is the set of all the interior edges or faces. Denote $|\cdot|$ the $d$-dimensional Lebesgue measure.  For $\tau\in\mathcal{T}$, its associated patch as $\omega(\tau) = \bigcup\limits_{\bar{\tau}'\cap\bar{\tau}\not=\varnothing}\tau^{\prime}$. 
For a set $\mathcal{R}\subseteq\mathcal{T}$,  its associated patch element patch is $\omega(\mathcal{R}) = \bigcup\limits_{\tau\in\mathcal{R}}\omega(\tau)$.

Given a positive integer $\ell\geqslant 1$, the $\ell$-th order weak Galerkin finite element space on $\mathcal{T}$ is defined as follows:
\begin{equation}
V^{_{\rm WG}}(\mathcal{T}):=\{v =\{v_0, v_b\}: v_0|_\tau\in P_\ell(\tau), v_b|_e\in 
P_{\ell-1}(e), e\in \mathcal E, \tau\in\mathcal{T}\},
\end{equation}
and the one with zero boundary condition:
\begin{equation}
V^{_{\rm WG}}_0(\mathcal{T})=\{v: v\in V^{_{\rm WG}}(\mathcal{T}), \ v_b=0\ 
\mbox{on}\ 
\partial\Omega\}.
\end{equation}
Note that $v_b$ is single-valued on each $e$. 

For $v=\{v_0, v_b\}\in V^{_{\rm WG}}(\mathcal{T}), w=\{w_0, w_b\}\in 
V^{_{\rm WG}}(\mathcal{T})$, the discrete bilinear form for the variational form of problem \eqref{eq:pb-model} is defined as
\begin{equation}
\label{eq:bilinear-wg}
a^{_{\rm WG}}(v, w) := \sum_{\tau\in \mathcal{T}}(A\nabla_w v, \nabla_ww)_\tau + 
\sum_{\tau\in \mathcal{T}} h_\tau^{-1}\langle Q_bv_0-v_b, 
Q_bw_0-w_b\rangle_{\partial\tau},
\end{equation}
where the weak gradient $\nabla_w = \nabla_{w,\tau, \ell-1}$, and $Q_b$ is the $L^2$ projection from $L^2(e)$ to $P_{\ell-1}(e)$ on an $e\in \mathcal{E}$. Again when it is clear from the context, we shall omit the degree of the polynomial involved in the projection operator. 

The weak Galerkin discretization is: to find a $u_h=\{u_0, u_b\}\in 
V^{_{\rm WG}}_0(\mathcal{T})$ satisfying 
\begin{equation}\label{eq:wg}
a^{_{\rm WG}}(u_h, v) =(f, v_0), \quad \forall \, v=\{v_0, v_b\}\in V^{_{\rm WG}}_0(\mathcal{T}).
\end{equation}
The WG discretization \eqref{eq:wg} is well-posed as $a^{_{\rm WG}}(\cdot, 
\cdot)$ defines an inner product on the space $V^{_{\rm WG}}_0(\mathcal{T})$. 

For completion and the convenience of our readers, we include a short argument here to show that 
\begin{equation}
\label{eq:norm-wg}
\tnorm{v}_1= \big(a^{_{\rm WG}}(v, v) \big)^{1/2}
\end{equation}
defines a norm in 
$V^{_{\rm WG}}_0(\mathcal{T})$. Assume that $\tnorm{v}_1=0$,  then $\nabla_w 
v|_{\tau}=0$ on every element $\tau\in \mathcal{T}$ and $Q_bv_0=v_b$ on 
every $e\subset\partial\tau$, then by definition \eqref{Def:WG}
\begin{eqnarray*}
0=( \nabla_w v, \nabla v_0)_\tau
&=&-(v_0,  \Delta v_0)_\tau
 +\langle v_b,  \nabla v_0\cdot \bm{n}\rangle _{\partial\tau} 
 \\
&=&-(v_0, \Delta v_0)_\tau
 +\langle Q_b v_0, \nabla v_0\cdot \bm{n}\rangle _{\partial\tau} 
 \\
&=&-(v_0, \Delta v_0)_\tau
 +\langle v_0, \nabla v_0\cdot \bm{n}\rangle _{\partial\tau} 
 \\
&=&(\nabla v_0, \nabla v_0)_{\tau}, 
\end{eqnarray*}
which implies $\nabla v_0=0$ on $\tau$. As a result, $v$ is constant on $\tau$. Moreover, $Q_b v_0=v_b$ on each $\partial\tau$. Knowing $v_b=0$ on $\partial\Omega$. By an argument of continuation, $v_0=v_b=0$. Therefore, $\tnorm{\cdot}_1$ defines a norm and $a^{_{\rm WG}}(\cdot, \cdot)$ 
defines an inner product on the space $V^{_{\rm WG}}_0(\mathcal{T})$.

The proof above reveals that the boundary part $v_b$ can be set as a polynomial with degree one less than that of the interior one $v_0$ since its presence is only in 
$\langle v_b,  \nabla v_0\cdot \bm{n}\rangle _{\partial\tau}$.

\subsection{Modified weak Galerkin finite element}
Let $e\in \mathcal{E}^{\rm int}$ be the common edge/face shared by two elements $\tau_1, 
\tau_2\in \mathcal{T}$, and denote by $\omega({e})=\tau_{1} 
\cup \tau_{2}$.
We assume that globally each $e$ is associated with a fixed unit 
normal vector $\bm{n}_e$. When $d=2$, we can get $e$'s tangential vector by 
$\bm{t}_e = \langle n_{e,2}, -n_{e,1} \rangle$, which is obtained by rotating 
$\bm{n}_e$ clockwise by $\pi/2$. Without loss of 
generality, the $\bm{n}_e$ is assumed pointing from $\tau_1$ to $\tau_2$. Denote by 
$\boldsymbol{n}_1$ and 
$\boldsymbol{n}_2$ the 
outer unit normal vectors with respect to $\tau_1$ and $\tau_2$, respectively.  For 
a smooth enough scalar function $w$, we define its average and jump on $e$ by
\begin{eqnarray*}
&&\{w\}_e = (w|_{\tau_1} + w|_{\tau_2})/{2}, \quad \mbox{for}\ \ e\in 
\mathcal{E}^{\mathrm{int}};\\
&&\jumpat{w}{e}=w|_{\tau_1}  - w|_{\tau_2} , \quad \mbox{for}\ \ 
e\in \mathcal{E}^{\mathrm{int}};\\
&&\jumpat{w}{e}= \{w\}_e=w, \quad \mbox{for}\ \ e\subset 
\partial\Omega.
\end{eqnarray*}
Similarly, for an admissible vector function $\boldsymbol{w}$, we have
\begin{eqnarray*}
&&\{\boldsymbol{w}\}_e = (\boldsymbol{w}|_{\tau_1} + \boldsymbol{w}|_{\tau_2})/{2}, 
\ \mbox{for}\ \ e\in \mathcal{E}^{\mathrm{int}};\\
&&\jump{\boldsymbol{w} \cdot\bm{n}_e}_{e}=\boldsymbol{w}|_{\tau_1} 
\cdot\boldsymbol{n}_1 + 
\boldsymbol{w}|_{\tau_2}\cdot \boldsymbol{n}_2, \ \mbox{for}\ \ e\in 
\mathcal{E}^{\mathrm{int}};\\
&& \{\boldsymbol{w}\}_e=\boldsymbol{w}, \quad 
\jumpat{\boldsymbol{w}\cdot \bm{n}_e}{e}=\boldsymbol{w}\cdot\boldsymbol{n}_e, \ \mbox{for}\ \ 
e\subset 
\partial\Omega.
\end{eqnarray*}
In the definitions above, $w|_{\tau}$ and $\boldsymbol{w}|_{\tau}$'s values on $e$ 
are defined for those spaces which yield a well-defined trace, respectively.

Specifically, in discretization \eqref{eq:wg}, $v_0$ is chosen to be in the 
discontinuous polynomial space 
\begin{eqnarray*}  
V^{\scriptscriptstyle{\rm DG}}(\mathcal{T}) := \{ v: v|_\tau \in P_\ell(\tau), 
\tau\in\mathcal{T} \},
\end{eqnarray*}
and the edge/face term $v_b = Q_b\{v_0\}$. Now a weak function is 
$v=\{v_0, Q_b\{v_0\}\}$, which offers a continuous embedding of  $V^{\scriptscriptstyle{\rm DG}}(\mathcal{T})\hookrightarrow 
V^{_{\rm WG}}(\mathcal{T})$ with respect to norm \eqref{eq:norm-wg}, which will be shown equivalent to the modified IP norm associated with \eqref{eq:bilinear-mwg}. The modified weak Galerkin finite element space for $\mathcal{T}$ is then defined as 
\begin{equation}
\label{eq:space-mwg}
V(\mathcal{T}):=\{v_\mathcal{T}=\{v_0, v_b \} :  v_0 \in V^{\scriptscriptstyle{\rm DG}}(\mathcal{T}); v_b=Q_b\{v_0\} , e\in \mathcal{E} \},
\end{equation}
and 
\begin{equation}
V_0(\mathcal{T})=\{v_\mathcal{T}: v_\mathcal{T}\in V(\mathcal{T}), \quad Q_b\{v_0\}=0\ \mbox{on}\ 
\partial\Omega\}.
\end{equation}
\begin{remark}
The original modified weak Galerkin function space was introduced in 
\cite{2014WangMalluwawaduGaoEtAlmodified}. 
The edge/face term $v_b$ is chosen as $\{v_0\}|_e\in P_{\ell}(e)$ in 
\cite{2014WangMalluwawaduGaoEtAlmodified}, while $v_b \equiv Q_b\{v_0\} \in P_{\ell-1}(e)$ here to match the reduced-order weak Galerkin scheme in~\cite{Mu;Wang;Ye:2015Galerkin}. 
\end{remark}

The definition of the weak gradient is modified accordingly as follows.
\begin{definition}\label{Def:MWG}
The modified weak gradient operator acting on any $v_\mathcal{T}\in V(\mathcal{T})$, denoted by 
$\nabla_{w,\tau}$ on $\tau \in \mathcal T$, is defined as the unique polynomial in $(P_{\ell-1}(\tau))^d, d=2, 3$ such that its inner product with any $\bm{q}\in (P_{\ell-1}(\tau))^d$ satisfies the following equation:
\begin{equation}
\label{eq:mwg}
( \nabla_{w,\tau} v_\mathcal{T}, \bm{q})_\tau = -(v_0, \nabla\cdot \bm{q})_\tau + \langle Q_b\{v_0\}, 
\bm{q}\cdot 
\boldsymbol{n} \rangle_{\partial \tau}.
\end{equation}
When the triangulation is clear from the context, we shall abbreviate the notation $\nabla_{w,\tau}$ as $\nabla_w$. 
\end{definition}

Let $e\in \mathcal{E}^{\rm int}$ be the common edge/face shared by $\tau_1, \tau_2\in 
\mathcal{T}$. By using the mWG space \eqref{eq:space-mwg} in the WG bilinear 
form \eqref{eq:bilinear-wg}, one can simplify the stabilization term on $e$ for $v_\mathcal{T}=\{v_0,  Q_b\{v_0\}\}\in V(\mathcal{T})$ and $w_\mathcal{T}=\{w_0,  Q_b\{w_0\}\}\in V(\mathcal{T})$, henceforth the same argument applies to other edges/faces.
 \begin{eqnarray*} 
\lefteqn{  \langle Q_b(v_0-\{v_0\}), Q_b(w_0-\{w_0\})\rangle_{e}  }
\\
&&=\left\langle Q_b\left(v_0|_{\tau_1}-\frac{v_0|_{\tau_1}+v_0|_{\tau_2}}{2}\right), 
Q_b\left(w_0|_{\tau_1}-\frac{w_0|_{\tau_1}+w_0|_{\tau_2}}{2}\right)\right\rangle_{e}
\\
&&= \left\langle Q_b\left(\frac{v_0|_{\tau_1}-v_0|_{\tau_2}}{2}\right), 
Q_b\left(\frac{w_0|_{\tau_1}-w_0|_{\tau_2}}{2}\right)\right\rangle_{e}
\\
&&= \frac{1}{4} \langle Q_b\jump{v_0}{}, Q_b\jump{w_0}{}\rangle_{e}.
 \end{eqnarray*}
Consequently, for $v_\mathcal{T}, w_\mathcal{T}\in V(\mathcal{T})$, the associated bilinear form is 
defined as
\begin{eqnarray}\nonumber
a_{\mathcal{T}}(v_\mathcal{T}, w_\mathcal{T}) &:= &
\sum_{\tau\in \mathcal{T}}(A\nabla_w v_\mathcal{T}, \nabla_ww_\mathcal{T})_\tau + 
4 \sum_{\tau\in \mathcal{T}}  h_\tau^{-1}\langle Q_b(v_0-\{v_0\}), 
Q_b(w_0-\{w_0\})\rangle_{\partial\tau}
\\  \label{eq:bilinear-mwg}
&=& \sum_{\tau\in \mathcal{T}}(A\nabla_w v_\mathcal{T}, \nabla_ww_\mathcal{T})_\tau + \sum_{\tau\in 
\mathcal{T}}  h_\tau^{-1}\langle Q_b\jump{v_0}{}, Q_b\jump{w_0}{}\rangle_{\partial\tau}.
\end{eqnarray}

A modified weak Galerkin (mWG) approximation is then to seek $\uH \in 
V_0(\mathcal{T})$ satisfying 
\begin{equation}
\label{eq:pb-mwg}
a_{\mathcal{T}}(\uH, v_\mathcal{T}) =(f, v_0), \quad \forall v_\mathcal{T}  \in V_0(\mathcal{T}).
\end{equation}
It is clear that the modified weak Galerkin finite element scheme \eqref{eq:pb-mwg} is also well-posed since the problem is solved in a subspace (the embedding of the DG space) of the original WG space. If nonhomogeneous Dirichlet boundary condition $u=g$ on $\partial \Omega$ is presented, $\uH$ can be decomposed to the sum of two parts, the first part satisfies the approximation problem above. For the second part, the boundary contribution is $\{0, Q_b g\}$ which can be moved to the right-hand side in addition to the source term. For other treatments of the boundary data such as interpolations, see e.g.,~\cite{2013MuWangWangEtAlcomputational}.
\begin{remark}
\label{remark:ipdg}
In the classic IPDG formulation (see e.g.,~\cite{Bonito;Nochetto:2010Quasi-optimal}), for $ v_{0}, w_{0}\in V^{\scriptscriptstyle{\rm DG}}(\mathcal{T})$, the bilinear form is 
\begin{eqnarray}\nonumber
a_{\mathcal{T}}^{_{\rm IPDG}}(v_{0}, w_{0}) 
&=& \sum_{\tau\in \mathcal{T}}(A\nabla_h v_{0}, \nabla_h w_{0})_{\tau} - \sum_{e\in \mathcal{E}} \langle\jump{v_{0}}{}, \{A\nabla_h w_{0} \cdot \boldsymbol{n}\}\rangle_{e}
\\ \label{eq:IPDG}
&&\hspace{-0.5cm}-
\sum_{e\in \mathcal{E}} \langle\jump{w_{0}}{},\{A\nabla_h v_{0} \cdot \boldsymbol{n}\}\rangle_{e} 
+ \sum_{\tau\in 
\mathcal{T}}\mu h_\tau^{-1}\langle \jump{v_{0}}{}, \jump{w_{0}}{}\rangle_{\partial\tau}.
\ \ \ \ \ 
\end{eqnarray}

In comparison, in \eqref{eq:bilinear-mwg}, using the definition of the weak gradient, several times of the integration by parts, and the assumption that $A$ is a piecewise constant, for $v_\mathcal{T}=\{v_0,  Q_b\{v_0\}\}, 
w_\mathcal{T}=\{w_0,  Q_b\{w_0\}\}\in V(\mathcal{T})$, an equivalent bilinear form of \eqref{eq:bilinear-mwg} reads
\begin{eqnarray}\nonumber
a_{\mathcal{T}}(v_{\mathcal{T}}, w_{\mathcal{T}}) 
&=& \sum_{\tau\in \mathcal{T}}(A\nabla_h v_{0}, \nabla_h w_{0})_{\tau} 
- \sum_{e\in \mathcal{E}} \langle\jump{v_{0}},\{A\nabla_h w_{0} \cdot \boldsymbol{n}\}\rangle_{e}
\\  \label{eq:pb-mwg2}
&&\hspace{-0.5cm}- \sum_{e\in \mathcal{E}} \langle\jump{w_{0}}{}, \{A\nabla_w v_{\mathcal{T}} \cdot \boldsymbol{n}\}\rangle_{e} 
+\sum_{\tau\in 
\mathcal{T}}  h_\tau^{-1}\langle Q_b\jump{v_{0}}{}, Q_b\jump{w_{0}} \rangle_{\partial\tau}.
\ \ \ \ \ \ \
 \end{eqnarray}

The only difference is that one $\nabla_h v_{0}$ in \eqref{eq:IPDG} is replaced by $\nabla_w v_{\mathcal{T}}$ in \eqref{eq:pb-mwg2}. This minor change leads to a major improvement that the mWG \eqref{eq:pb-mwg} is automatically coercive and continuous as the bilinear form $a_{\mathcal{T}}(\cdot, \cdot)$ induces a norm. While in IPDG \eqref{eq:IPDG}, the penalty parameter $\mu$ being sufficiently large is a necessary condition to achieve the coercivity both theoretically and numerically~\cite{Shahbazi:2005explicit}.
\end{remark}

The following quantity is well-defined for $H^1_0(\Omega)+V_0(\mathcal{T})$ and 
defines a mesh dependent norm on $V_0(\mathcal{T})$ (for similar results see e.g.,
\cite{Mu;Wang;Ye:2015Galerkin})
\begin{equation}
\label{A2}
\tnorm{\vH }_{\mathcal{T}}^2:= a_{\mathcal{T}}\left(\vH, 
\vH\right)
= \bigl\|A^{1/2}\nabla_{w} \vH \bigr\|_{\mathcal{T}}^2
+  \sum_{\tau\in \mathcal{T}}   h_\tau^{-1} \big\|Q_b\jump{v_0}{}
\big\|^2_{\partial\tau}.
\end{equation}
We note that \eqref{A2} defines a norm on $V_0(\mathcal{T})$ naturally, since $V_0(\mathcal{T})$ is a subspace of the bigger space $V_0^{_{\rm WG}}(\mathcal{T})$ and $\tnorm{\vH }_{\mathcal{T}}^2$ is equivalent to \eqref{eq:norm-wg} {\RV{restricted}} to this subspace; see \eqref{eq:norm-wg} and the proof afterwards.

With slight abuse of notation, we are interested in estimating the following error 
using computable quantities and designing a convergent adaptive algorithm to reduce its magnitude successively:
\begin{equation}
\label{eq:error}
\tnorm{u - \uH}^2_{\mathcal{T}} := 
\bigl\|A^{1/2}(\nabla u - \nabla_{w} \uH) \bigr \|^2
+ \sum_{\tau\in \mathcal{T}}   h_\tau^{-1} \big\|Q_b\jump{u-u_0}{}
\big\|^2_{\partial\tau}.
\end{equation}

\section{\emph{A Posteriori} Error Analysis}
\label{sec:error}

In this section, we shall prove the reliability and efficiency of a residual-type error estimator. For $\tau\in\mathcal{T}$, we define the element-wise 
error estimator as
\begin{eqnarray}\nonumber
 \lefteqn{
\eta^2(\nabla_w \vH, \tau)  := \;
h_{\tau}^{2}A^{-1}_{\tau}\left\|R(\nabla_w \vH)\right\|_{0, \tau}^{2} 
    }  
\\ \label{eq:eta}
 &&\quad \ \   +\sum_{e\subset\partial\tau}h_{\tau}\int_{e}
 \left(\big(A_e^{\max}\big)^{-1} J^2_{n,e}(A\nabla_w \vH) 
+ A_e^{\min} J^2_{t,e}(\nabla_w \vH)\right)\,\mathrm{d}s,\ \ \
\end{eqnarray}
where $A_e^{\max} := \max\{A_{\tau_1}, A_{\tau_2} \}$  
and $A_e^{\min} := \min\{A_{\tau_1}, A_{\tau_2} \}$  for $\tau_1,\tau_2\in \omega(e)$.
The element residual is defined as
\begin{eqnarray*}
R(\nabla_w \vH) = f+\nabla\cdot\left( A\nabla_{w} 
\vH\right), 
\end{eqnarray*}
and the normal jump of the weak flux is defined as
\begin{eqnarray*}
J_{n,e}(A\nabla_w \vH) := 
\left\{ \begin{array}{l}
 \jump{A\nabla_w \vH\cdot \bm{n}_e}_{e}, \quad \mbox{if} \ \ e\in 
 \mathcal{E}^{\mathrm{int}}\\
 0,  \quad\quad\quad \quad\quad\quad \mbox{otherwise}.
     \end{array} 
     \right.
\end{eqnarray*}
For the tangential jumps, when $d=2$:
\[
J_{t,e}(\nabla_w \vH) := 
\left\{ \begin{array}{l}
 \jumpat{\nabla_w \vH\cdot \bm{t}_e}{e}, \quad \mbox{if} \ \ e\in 
 \mathcal{E}^{\mathrm{int}}\\
 0,  \quad\quad\quad \quad\quad\quad \mbox{otherwise},
     \end{array} 
     \right.
\]
and when $d=3$
\[
J_{t,e}(\nabla_w \vH) := 
\left\{ \begin{array}{l}
 \jumpat{\nabla_w \vH\times \bm{n}_e}{e}, \quad \mbox{if} \ \ e\in 
 \mathcal{E}^{\mathrm{int}}\\
 0,  \quad\quad\quad \quad\quad\quad \mbox{otherwise}.
     \end{array} 
     \right.
\]
Then the error estimator for the set $\mathcal{M} \subseteq\mathcal{T}$ is defined as
\begin{equation}
\label{eta2}
\eta^2(\nabla_w \vH, \mathcal{M}) = 
\sum_{\tau\in\mathcal{M}}\eta^2(\nabla_w \vH, \tau).
\end{equation}

In computing the error estimator, only the information of $\nabla_w \vH$ is used. Thus, we opt for a notation of $\eta(\nabla_w \vH, \mathcal{M})$ instead of $\eta(\vH, \mathcal{M})$. In Remark \ref{remark:eta-gradw}, some further explanation is given with regard to this choice of the notation for the error estimator in the context of the convergence analysis.

\subsection{A space decomposition}
In this section, a space decomposition is first introduced 
to bridge the past result of the full degree jump $\jump{v_0}{}$ to $Q_b\jump{v_0}{}$ with one less degree. Then, similar to~\cite{Bonito;Nochetto:2010Quasi-optimal}, a 
partial orthogonality for the mWG approximation $\uH$ is introduced to enable the insertion of continuous interpolants to prove the reliability.

For $V^{\scriptscriptstyle{\rm DG}}(\mathcal{T})$ consisting of discontinuous polynomial of degree $\ell$ :
\begin{equation}
\label{eq:space}
V^{\rm c}(\mathcal{T}) \subset V^{\rm nc}(\mathcal{T}) \subset V^{\scriptscriptstyle{\rm DG}}(\mathcal{T}),
\end{equation}
where $V^{\rm c}(\mathcal{T}):=V^{\scriptscriptstyle{\rm DG}}(\mathcal{T})\cap H_0^1(\Omega)$ is 
 the continuous Lagrange finite element space. $V^{\rm nc}(\mathcal{T})$ is a subspace of $V^{\scriptscriptstyle{\rm DG}}(\mathcal{T})$ so that their 
quotient 
space is endowed with the induced topology of the seminorm $\sum_{e\in \mathcal{E}} 
h_\tau^{-1} \big\|Q_b\jump{\cdot}{}
\big\|^2_{e}$.  When $\ell=1$, $V^{\rm nc}(\mathcal{T})$ is simply the well-known 
Crouzeix-Raviart finite 
element space~\cite{Crouzeix;Raviart:1973}. When $\ell\geq 1$, $V^{\rm nc}(\mathcal{T})$ is a generalization of the 
Crouzeix-Raviart type nonconforming finite element space 
\cite{Ciarlet;Dunkl;Sauter:2018family}, 
which can be viewed a special case of nonconforming virtual element 
\cite{Ayuso-de-Dios;Lipnikov;Manzini:2016nonconforming} restricted on triangulations:
\begin{equation}
\label{eq:nc}
V^{\rm nc}(\mathcal{T}):= V^{\scriptscriptstyle{\rm DG}}(\mathcal{T}) \cap H^{1, {\rm nc}}\big(\mathcal{T} \big),
\end{equation}
where
\begin{equation}
\label{eq:nc-continuity}
H^{1, \rm nc}\big(\mathcal{T} \big)=\left\{v \in 
\prod_{\tau \in \mathcal{T}} H^1(\tau): \int_{e} \jump{v}{}  q \, \mathrm{d} 
s=0 
\quad 
\forall q \in P_{\ell-1}(e), \forall e \in \mathcal{E}\right\}.
\end{equation}

Unlike the virtual element space, where the shape function may not be polynomials, it is shown in 
\cite{Ciarlet;Ciarlet;Sauter;Simian:2016Intrinsic,Ciarlet;Dunkl;Sauter:2018family}
 that the space $V^{\rm nc}(\mathcal{T})$ can be obtained from $V^{\rm 
c}(\mathcal{T})$ by adding locally supported polynomial bases. 
More specifically, in~\cite{Ciarlet;Dunkl;Sauter:2018family}, the authors constructed a set of nodal 
basis for a strict subspace of $V^{\rm nc}(\mathcal{T})$ while satisfying the 
continuity 
constraint \eqref{eq:nc-continuity}. However, to serve the purpose of this paper, 
$V^{\rm nc}(\mathcal{T})$ is to bridge the proofs, it only suffices to know the 
existence of a set of unisolvent degrees of freedom, while not explicitly construct 
the dual basis of it. We refer the reader to~\cite[Section 
3.2]{Ayuso-de-Dios;Lipnikov;Manzini:2016nonconforming} for the degrees of freedom 
for general polytopes which includes the case of triangulations ($d=2,3$) in this 
paper.

When restricted a smaller subspace 
$V^{\rm c}(\mathcal{T}) \subset V^{\scriptscriptstyle{\mathrm{DG}}}(\mathcal{T})$, the weak gradient coincides with the piecewise gradient, and the stabilization is vanished which leads to the following partial orthogonality. 
\begin{lemma}
\label{lem:PartialOrthogonality}
 Let $u$ and $\uH \in V_0(\mathcal{T})$ be the solutions of 
 \eqref{eq:pb-model} and \eqref{eq:pb-mwg}, respectively, then
 \begin{eqnarray}
 \label{eq:orthogonality-partial}
 \big(A\nabla u - A\nabla_w \uH, \nabla v^{\rm c}\big)_{\mathcal{T}} = 0, \quad \forall 
 v^{\rm c}\in V^{\rm c}(\mathcal{T}).
 \end{eqnarray} 
\end{lemma}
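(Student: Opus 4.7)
The plan is to exploit the chain of inclusions $V^{\rm c}(\mathcal{T}) \subset V^{\rm nc}(\mathcal{T}) \subset V^{\scriptscriptstyle{\rm DG}}(\mathcal{T})$ and to test both the continuous and discrete problems against the same continuous function $v^{\rm c}$, once viewed as an element of $H^1_0(\Omega)$ and once lifted into the mWG space $V_0(\mathcal{T})$ via the canonical embedding $v^{\rm c} \mapsto \{v^{\rm c}, Q_b\{v^{\rm c}\}\}$. Because $v^{\rm c}$ is continuous and vanishes on $\partial\Omega$, this lift indeed sits in $V_0(\mathcal{T})$.

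The first key step is the identity $\nabla_w v^{\rm c}|_\tau = \nabla v^{\rm c}|_\tau$ on every $\tau \in \mathcal{T}$. To establish it I would plug $v^{\rm c}$ into Definition \ref{Def:MWG}: for any $\bm{q} \in (P_{\ell-1}(\tau))^d$, integration by parts on the classical gradient gives $(\nabla v^{\rm c}, \bm{q})_\tau = -(v^{\rm c}, \nabla \cdot \bm{q})_\tau + \langle v^{\rm c}, \bm{q}\cdot\bm{n}\rangle_{\partial \tau}$. Since $v^{\rm c}$ is single-valued across each edge $e$, $\{v^{\rm c}\}_e = v^{\rm c}|_e$, and since $\bm{q}\cdot\bm{n}|_e \in P_{\ell-1}(e)$, the $L^2$-projection $Q_b$ is transparent: $\langle v^{\rm c}, \bm{q}\cdot\bm{n}\rangle_{\partial\tau} = \langle Q_b\{v^{\rm c}\}, \bm{q}\cdot\bm{n}\rangle_{\partial\tau}$. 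Comparing with \eqref{eq:mwg} and invoking uniqueness yields the claim.

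The second observation, essentially for free, is that $Q_b\jump{v^{\rm c}}{} = 0$ on every $e \in \mathcal{E}$: interior jumps of a continuous function vanish, and the boundary contribution vanishes by $v^{\rm c}|_{\partial \Omega}=0$. Consequently, when the bilinear form \eqref{eq:bilinear-mwg} is tested against the lifted $v^{\rm c}$, the stabilization term drops out entirely and
\begin{equation*}
a_{\mathcal{T}}(\uH, \{v^{\rm c}, Q_b\{v^{\rm c}\}\}) = \sum_{\tau\in\mathcal{T}}(A\nabla_w \uH, \nabla v^{\rm c})_\tau = (A\nabla_w \uH, \nabla v^{\rm c})_{\mathcal{T}}.
\end{equation*}

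To finish, I would write the exact equation $(A\nabla u, \nabla v^{\rm c}) = (f, v^{\rm c})$ obtained from \eqref{eq:pb-model} with test function $v^{\rm c} \in H^1_0(\Omega)$, and the discrete equation $a_{\mathcal{T}}(\uH, \{v^{\rm c}, Q_b\{v^{\rm c}\}\}) = (f, v^{\rm c})$ from \eqref{eq:pb-mwg}; subtracting them and using the two observations above produces \eqref{eq:orthogonality-partial}. There is no serious obstacle here — the entire argument rests on the compatibility between the $L^2$-projection $Q_b$ and polynomial traces of $\bm{q}$, so that the modified weak gradient restricted to $V^{\rm c}(\mathcal{T})$ reduces to the classical (elementwise) gradient; if anything deserves care, it is simply verifying that $Q_b$ acts as the identity on polynomials of degree $\ell-1$ so that the boundary term in the definition of $\nabla_w$ matches the one arising from integration by parts.
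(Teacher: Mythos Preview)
Your argument is correct and follows essentially the same route as the paper's own proof: both test the continuous and discrete problems against $v^{\rm c}$, use that continuity kills the jump (hence the stabilization term), and reduce $\nabla_w v^{\rm c}$ to $\nabla v^{\rm c}$. Your write-up is in fact a bit more careful than the paper's in spelling out why $\nabla_w v^{\rm c} = \nabla v^{\rm c}$ via the transparency of $Q_b$ against $\bm{q}\cdot\bm{n}\in P_{\ell-1}(e)$, but the underlying idea is identical.
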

\begin{proof}
It follows from $v^{\rm c} \in H_0^1(\Omega)$ that $(A\nabla u, \nabla v^{\rm c}) = (f, v^{\rm c})$. In addition, $\jump{v^c}{}=0$ for functions when $v^{\rm c}\in V^{\rm c}(\mathcal{T})\subset 
H_0^1(\Omega)$, thus $Q_b\jump{v^{\rm c}}{}=0$ in $a_{\mathcal{T}}(\cdot,\cdot)$, which further implies 
$\nabla_w v^c= \nabla v^c$. 
Lastly since $V^{\rm c}(\mathcal{T})\cap H_0^1(\Omega)\subset 
V_0(\mathcal{T})$ implies 
that $a_{\mathcal{T}}(\uH, v^{\rm c})=(f, v^{\rm c})$ for any $v^{\rm c}\in 
V^{\rm c}(\mathcal{T})$, as a result, $(A\nabla_w u, 
\nabla v^{\rm c})_{\mathcal{T}} = (f, v^{\rm c})$ and the lemma follows.
\end{proof}

The following interpolation operator to the nonconforming space will play an important role in the analysis.
\begin{lemma}
\label{lem:est-nc-interp}
There exist an interpolation operator $I_{\mathcal{T}}: V^{\scriptscriptstyle{\rm DG}}(\mathcal{T})\rightarrow 
V^{\rm nc}(\mathcal{T})$, which is locally defined and a projection, 
as well as a constant depending only on the shape regularity of $\tau$ 
such that for all $\tau\in \mathcal{T}$  the following inequality hold:
for $|a| = 0, 1$,
\begin{equation}
\label{eq:est-interp}
\big\|D^a(\vH - I_{\mathcal{T}} \vH)\big\|_{\tau}^2\lesssim 
\sum_{e\in 
\mathcal{E}^{\mathrm{int}}(\omega(\tau))}h_\tau^{1-2|a|}
\bigl\|Q_b\jump{v_\mathcal{T}}{}\bigr\|^2_e, 
\quad\forall \vH\in V^{\scriptscriptstyle{\rm DG}}(\mathcal{T}).
\end{equation}
\end{lemma}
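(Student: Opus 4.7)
The plan is to construct $I_{\mathcal{T}}$ explicitly via the unisolvent set of degrees of freedom (DOFs) of $V^{\rm nc}(\mathcal{T})$ guaranteed in \cite{Ayuso-de-Dios;Lipnikov;Manzini:2016nonconforming}. These DOFs split into interior moments on each $\tau \in \mathcal{T}$ and, on each edge $e \in \mathcal{E}$, moments against $P_{\ell-1}(e)$. For a given $\vH \in V^{\scriptscriptstyle{\rm DG}}(\mathcal{T})$, I would define $I_{\mathcal{T}} \vH$ by setting its interior DOFs on $\tau$ equal to the matching moments of $\vH|_\tau$ and its edge DOFs on $e$ equal to $\int_e \{\vH\}\, q \, \mathrm{d}s$ for $q \in P_{\ell-1}(e)$. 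Because $\{\vH\}$ is single-valued on each edge, the continuity condition \eqref{eq:nc-continuity} is automatic and $I_{\mathcal{T}} \vH \in V^{\rm nc}(\mathcal{T})$; the construction depends only on data in $\omega(\tau)$, so $I_{\mathcal{T}}$ is local. The projection property follows at once: if $\vH \in V^{\rm nc}(\mathcal{T})$ already, then $\jump{\vH}{}$ is $L^2(e)$-orthogonal to $P_{\ell-1}(e)$, so $\{\vH\}$ and $\vH|_\tau$ share the same edge moments, and unisolvence yields $I_{\mathcal{T}}\vH = \vH$.

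For the quantitative estimate, fix $\tau$ and set $w := (\vH - I_{\mathcal{T}} \vH)|_\tau \in P_\ell(\tau)$. Every interior DOF of $w$ vanishes by construction, and on each interior edge $e = \tau\cap\tau'$ the boundary moments of $w$ against $q \in P_{\ell-1}(e)$ compute to
\begin{equation*}
\int_e \bigl(\vH|_\tau - \{\vH\}\bigr)\, q \, \mathrm{d}s \;=\; \tfrac{1}{2}\int_e \jump{\vH}{}\, q \, \mathrm{d}s \;=\; \tfrac{1}{2}\int_e (Q_b\jump{\vH}{})\, q \, \mathrm{d}s,
\end{equation*}
while on any boundary edge the analogous quantity vanishes because $\{\vH\} = \vH$ there. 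Since every DOF of $w$ is thus controlled by $\|Q_b\jump{\vH}{}\|_e$ on some $e \subset \partial\tau$, finite-dimensional norm equivalence on the reference simplex, combined with an affine pull-back and the shape regularity of $\tau$, produces
\begin{equation*}
\|w\|_\tau^2 \;\lesssim\; h_\tau \sum_{e \subset \partial\tau} \|Q_b\jump{\vH}{}\|_e^2,
\end{equation*}
where boundary edges contribute zero and the remaining interior edges of $\partial\tau$ lie in $\mathcal{E}^{\rm int}(\omega(\tau))$; this is the case $|a|=0$ of \eqref{eq:est-interp}. For $|a|=1$, the standard inverse inequality $\|\nabla w\|_\tau \lesssim h_\tau^{-1}\|w\|_\tau$, valid because $w$ is a polynomial of bounded degree on a shape-regular simplex, supplies the extra weight $h_\tau^{-1}$ and completes the proof.

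The main obstacle is not the scaling itself, which is routine, but rather justifying the DOF structure of $V^{\rm nc}(\mathcal{T})$ for general $\ell$ and ensuring that every implied constant depends only on the shape regularity of $\tau$. For the lowest-order case $\ell=1$ that drives the convergence and optimality analysis of this paper, $V^{\rm nc}(\mathcal{T})$ reduces to the classical Crouzeix--Raviart space with edge-midpoint values as DOFs and $Q_b$ acting as an edge average, so every step above becomes elementary and all constants are fully transparent.
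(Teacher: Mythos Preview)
Your argument is correct, and it takes a genuinely different route from the paper's. The paper follows Bonito--Nochetto \cite{Bonito;Nochetto:2010Quasi-optimal} and builds $I_{\mathcal{T}}$ via auxiliary local $L^2$-projections $\Pi_i$ onto the patch spaces $V^{\rm nc}(\omega_i)$, then sets $I_{\mathcal{T}}\vH=\sum_i \gamma_i(\Pi_i\vH)\,\phi_i$; the estimate is obtained by bounding $\vH-\Pi_i\vH$ on each patch and recombining, which is why the right-hand side of \eqref{eq:est-interp} involves all interior edges of the patch $\omega(\tau)$. You instead define $I_{\mathcal{T}}$ by direct averaging: interior DOFs copied from $\vH|_\tau$, edge DOFs set to the moments of $\{\vH\}$. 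This is simpler and more explicit, and it yields the sharper localization $e\subset\partial\tau\cap\mathcal{E}^{\rm int}$ rather than $e\in\mathcal{E}^{\rm int}(\omega(\tau))$. For $\ell=1$ your operator is exactly the Crouzeix--Raviart interpolation $I_{\mathcal{T}}^{^{\rm CR}}$ that the paper uses later in \eqref{eq:I-interpolation2}, so your construction also makes transparent why that operator enjoys the estimate \eqref{eq:est-interp}. The paper's patch-projection approach, on the other hand, is more robust in settings where an explicit DOF-averaging is unavailable. Your closing caveat about the DOF structure for general $\ell$ is well taken: the edge-plus-interior moments you and the paper both invoke are unisolvent for $P_\ell(\tau)$ on a simplex only when $\ell=1$ (a dimension count shows a mismatch for $\ell\ge 2$), so for higher order one must appeal to a different local basis; this subtlety affects the paper's proof as well and is irrelevant for the $\ell=1$ convergence analysis.
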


\begin{proof}
The proof follows a similar argument as the one in~\cite[Lemma 
6.6]{Bonito;Nochetto:2010Quasi-optimal}, and is presented here for completion. 
Denote the set of the degrees of freedom 
functionals in $V^{\rm nc}(\mathcal{T})$ as $\mathcal{N} = \{\gamma_i(\cdot)\}_{i\in 
\Lambda}$, and the nodal bases set is $\{\phi_i(\bm{x})\}$ corresponding to 
$\mathcal{N}$. Let $\omega_i = \operatorname{supp} 
\phi_i$, where $\omega_i$ is either a single element $\tau$ or $\omega(e)$, then we 
consider the following projection $\Pi_i \vH$ obtained through
\begin{equation}
\label{eq:nc-proj}
\big(\vH- \Pi_i \vH, w\big)_{\omega_i} = 0, \quad \forall w\in V^{\rm 
nc}(\omega_i):=  
\Pi_{\tau\in \omega_i}P_\ell(\tau)
\cap H^{1, \rm nc}\big(\mathcal{T} \big),
\end{equation}
and then the interpolation is defined as:
\begin{equation}
\label{eq:nc-interp}
I_{\mathcal{T}} \vH(\bm{x}) := 
\sum_{i\in \Lambda} \gamma_i\big(\Pi_i \vH \big) \phi_i(\bm{x}).
\end{equation}
If $\vH\in V^{\rm nc}(\omega_i)$, we have locally $\Pi_i \vH=\vH$ on $\omega_i$, 
thus 
$\gamma_i\big(\Pi_i \vH \big) = \gamma_i\big(\vH \big)$, and $I_{\mathcal{T}}$ is a 
projection. 

Moreover, $\vH\in V^{\rm nc}(\omega_i)$ implies that $\sum_{e\in 
\mathcal{E}^{\mathrm{int}}(\omega_i)}h_\tau^{-1}\big\|Q_b\jump{v_\mathcal{T}}{}\big\|^2_e = 
0$, as well as $\vH- \Pi_i \vH = 0$ by \eqref{eq:nc-proj}, hence by a scaling 
argument and the equivalence of norms on a finite-dimensional space, we have
\begin{equation}
\label{eq:nc-proj-est}
\sum_{\tau\in \omega_i} \|D^a(\vH - \Pi_i \vH)\|_{\tau}^2\lesssim 
\sum_{e\in \mathcal{E}^{\mathrm{int}}(\omega_i)}
h_\tau^{1-2|a|}\|Q_b\jump{\vH}{}\|^2_e,
\end{equation}
where we note that if a nodal basis $\phi_i$ has support $\omega_i = \tau$, both 
sides of the inequality above will be 0, as the projection \eqref{eq:nc-proj} modulo 
out the element bubbles. 

Next, consider on a $\tau\in \mathcal{T}$, and all local degrees of freedom 
$\{\gamma_i(\cdot)\}_{i=1}^{N_\tau}$ of which the nodal basis having support 
$\omega_i$ overlaps with $\tau\cup \partial \tau$, we have on $\tau$
\begin{equation}
\label{eq:nc-interp-split}
\vH - I_{\mathcal{T}}
= (\vH- \Pi_1 \vH) - \sum_{i=1}^{N_\tau} \gamma_i(\Pi_1 \vH - \Pi_i \vH) \phi_i.
\end{equation}
On $\tau\cup \partial \tau$, $\gamma_i(\vH)$ 
is either defined on $e\subset \partial \tau$ (see~\cite[Section 
3.2]{Ayuso-de-Dios;Lipnikov;Manzini:2016nonconforming})
\[
\gamma_i(\vH) = \frac{1}{|e|}\int_e \vH p \,\dd s, \forall p\in P_{\ell-1}(e),
\]
or defined on $\tau$ ($\ell\geq 2$)
\[
\gamma_i(\vH) = \frac{1}{|\tau|}\int_\tau \vH p \,\dd s, \forall p\in P_{\ell-2}(\tau),
\]
and in both cases, we have by a simple scaling argument and by \eqref{eq:nc-proj-est}
\[
\big\|\gamma_i(\Pi_1 \vH - \Pi_i \vH) \big\|_{\tau}
\lesssim \big\|\Pi_1 \vH - \Pi_i \vH \big\|_{\tau}
\lesssim \sum_{e\in \mathcal{E}^{\mathrm{int}}(\omega_i)}
h_\tau\|Q_b\jump{v_0}{}\|^2_e.
\]
Lastly, combining the estimate above with \eqref{eq:nc-interp-split}, using the 
fact that $\|\phi_i\|_{\infty,\tau} \lesssim 1$, and estimate \eqref{eq:nc-proj-est} 
yields the desired estimate.
\end{proof}

\subsection{Reliability}
In this section, the global reliability of the estimator in \eqref{eta2} is to be shown. The difference between the weak gradient and classical gradient can be controlled by the jump term which becomes a handy tool in our analysis.

\begin{lemma}
\label{lem:est-diff-weak}
For $\vH =\{v_0, v_b\}\in V(\mathcal{T})$, it holds for any $\mathcal{M}\subseteq 
\mathcal{T}$
\begin{eqnarray*}
\bigl\|A_\tau^{1/2}(\nabla_{w} \vH-\nabla_h v_0)\bigr\|_{\mathcal{M}}^{2}
\lesssim 
\sum_{e \in \mathcal{E}(\mathcal{M})} 
  A_e^{\max} h_{\tau}^{-1}\|Q_b\jump{v_0}{}\|^2_{e}.
\end{eqnarray*}
\end{lemma}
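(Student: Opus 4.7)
The plan is to derive a per-element bound of the form
$\|\nabla_w \vH - \nabla_h \vH\|_\tau^2 \lesssim h_\tau^{-1} \sum_{e \subset \partial\tau} \|Q_b\jump{\vH}{}\|_e^2$
and then sum over $\tau \in \mathcal{M}$, absorbing the piecewise-constant weight $A$ into the edge-based constant $A_e^{\max}$.

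First, I would fix $\tau\in\mathcal{T}$ and test the defining identity \eqref{eq:mwg} of $\nabla_w \vH$ against any $\bm{q}\in (P_{\ell-1}(\tau))^d$, while also integrating by parts on $(\nabla_h \vH, \bm{q})_\tau$. Subtracting the two identities, the volume terms cancel and one obtains
\begin{equation*}
(\nabla_w \vH - \nabla_h \vH,\bm{q})_\tau
= \langle Q_b\{\vH\} - \vH,\,\bm{q}\cdot \bm{n}\rangle_{\partial\tau}.
\end{equation*}
Since $\bm{q}\cdot\bm{n}|_e\in P_{\ell-1}(e)$, I can insert the $L^2$-projection on the left entry and rewrite the right-hand side as $\langle Q_b(\{\vH\}-\vH),\bm{q}\cdot\bm{n}\rangle_{\partial\tau}$.

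Next I would take $\bm{q} = \nabla_w \vH - \nabla_h \vH$, which lies in $(P_{\ell-1}(\tau))^d$, apply Cauchy--Schwarz on each face, and use the polynomial trace/inverse inequality $\|\bm{q}\cdot\bm{n}\|_{\partial\tau}\lesssim h_\tau^{-1/2}\|\bm{q}\|_\tau$ to obtain
\begin{equation*}
\|\nabla_w \vH - \nabla_h \vH\|_\tau
\;\lesssim\; h_\tau^{-1/2}\,\|Q_b(\{\vH\}-\vH)\|_{\partial\tau}.
\end{equation*}
On an interior edge $e\subset\partial\tau$ with $\tau=\tau_1$, the elementary identity $\{\vH\}-\vH|_{\tau_1} = -\tfrac{1}{2}\jump{\vH}{e}$ and the analogous vanishing identity on boundary edges then yield
\begin{equation*}
\|Q_b(\{\vH\}-\vH)\|_{\partial\tau}^2
\;=\; \tfrac14\sum_{e\subset\partial\tau}\|Q_b\jump{\vH}{}\|_e^2.
\end{equation*}

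Finally, multiplying by $A_\tau$ (piecewise constant) and summing over $\tau\in\mathcal{M}$, each interior edge $e\in\mathcal{E}(\mathcal{M})$ appears from at most two adjacent elements, each contributing $A_\tau h_\tau^{-1}\|Q_b\jump{\vH}{}\|_e^2$. Shape regularity gives $h_{\tau_1}\sim h_{\tau_2}$ for $\tau_1,\tau_2\in\omega(e)$, and by definition $A_{\tau_i}\le A_e^{\max}$, so the double sum is bounded by $\sum_{e\in\mathcal{E}(\mathcal{M})} A_e^{\max} h_\tau^{-1}\|Q_b\jump{\vH}{}\|_e^2$, as claimed.

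There is no real obstacle here; this is a small but sharp calculation. The only delicate points are keeping track of signs in $\{\vH\}-\vH|_\tau = -\tfrac12\jump{\vH}{}$ so that one recovers the jump on the right, and remembering to insert $Q_b$ (legal because $\bm{q}\cdot\bm{n}\in P_{\ell-1}(e)$) so that the resulting boundary term is a $Q_b$-jump rather than a full jump.
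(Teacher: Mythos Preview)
Your proof is correct and follows essentially the same route as the paper: test the weak-gradient identity against $\bm{q}=\nabla_w\vH-\nabla_h\vH$, integrate by parts to isolate the boundary term $Q_b(\{\vH\}-\vH)=\mp\tfrac12 Q_b\jump{\vH}{}$, apply a trace/inverse inequality, and sum over $\tau\in\mathcal{M}$. The only cosmetic difference is that the paper weaves the coefficient $A_\tau$ into the test function from the outset (using that $A$ is piecewise constant so $A_\tau\bm{q}\in(P_{\ell-1}(\tau))^d$), whereas you multiply by $A_\tau$ after obtaining the coefficient-free elementwise bound.
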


\begin{proof}
The proof follows from~\cite[Lemma 2.1]{Zhang;Lin:2017posteriori} with coefficient 
added and a more localized version is presented. On $\tau\in\mathcal{T}$, let 
$\bm{q} = \nabla_{w} \vH - \nabla_h v_0 \in 
\bm{P}_{l-1}(\tau)$, applying 
integration by parts on $\nabla_h v_0$ and the weak gradient definition  
\eqref{eq:mwg} on $\nabla_{w} \vH$, we have 
\begin{eqnarray*} 
\int_{\tau} A_\tau(\nabla_{w} \vH - \nabla_h v_0)\cdot \bm{q} 
\,\mathrm{d} x
&=&\int_{\partial 
\tau}\big(Q_b\{v_0\}-v_0\big) A_{\tau}\bm{q}  
\cdot \boldsymbol{n} \,\mathrm{d}s
\\
&=& \int_{\partial \tau} 
Q_b\big(\{v_0\}-v_0\big) 
 A_{\tau}\bm{q}  \cdot \boldsymbol{n} \,\mathrm{d}s
\\
&=& 
\sum_{e\subset\partial\tau}\pm\frac{1}{2}\int_{e}Q_b\jump{v_0}{}
 A_{\tau}\bm{q}  \cdot \boldsymbol{n} \,\mathrm{d}s,
\end{eqnarray*}
where the plus or minus sign depends on whether the outward normal for 
$e\subset\partial \tau$ coincides with the globally defined normal for this 
edge/face. Now by a standard trace inequality and an inverse inequality, we have
\begin{eqnarray*}
\lefteqn{\bigl\|A^{1/2}_\tau(\nabla_{w} \vH - \nabla_h 
v_0)\bigr\|_{\tau}^2}\\
&& \lesssim \sum_{e\subset\partial\tau}
A_{\tau}^{1/2} h_{\tau}^{-1/2}\left\|Q_b\jump{v_0}{}\right\|_{e}
A_{\tau}^{-1/2} h_{\tau}^{1/2}
\left\|A_{\tau}\boldsymbol{q}\cdot \bm{n}\right\|_{e}
\\
&&\lesssim \left(\sum_{e\subset\partial\tau} 
  A_e^{\max} h_{\tau}^{-1}\left\|Q_b\jump{v_0}{}\right\|^2_{e}\right)^{1/2} 
\bigl\|A^{1/2}_\tau(\nabla_{w} \vH - \nabla_h 
v_0)\bigr\|_{\tau},
\end{eqnarray*}
then the desired result follows by canceling a $\bigl\|A^{1/2}_\tau(\nabla_{w} 
\vH - \nabla_h v_0)\bigr\|_{\tau}$ and summing up the element-wise estimate for every 
$\tau\in \mathcal{M}$.
\end{proof}

Next we bound the stabilization term by the element-wise residual and the normal jump of the weak flux. 
We note that in~\cite{Zheng;Xie2017}, though focusing on a different 
model problem, the $h^{-1}$-weighted solution jump can be used as the sole 
error estimator to guarantee reliability and efficiency up to data oscillation. 
Nevertheless, the motivation here is to change the dependence of the error 
indicators on the local mesh size $h$ from $h^{-1}$ to $h^s, s>0$, so that a contraction 
property of the estimator can be proved in Section \ref{sec:est-eta} without any 
saturation assumptions, which is one of the keys to show the convergence of an 
adaptive algorithm.

\begin{lemma}
\label{lem:est-jump}
Let $u$ be the weak solution of \eqref{eq:pb-model} and $\uH=\{u_0, u_b\} \in 
V_0(\mathcal{T})$ be the solution to \eqref{eq:pb-mwg}, we have
\begin{eqnarray}\nonumber
\sum_{e\in\mathcal{E}} 
   A_e^{\max} h_\tau^{-1}\|Q_b\jump{u_0}{}\|_e^2
&\lesssim & \Bigg(\sum_{\tau \in 
\mathcal{T}}h_{\tau}^{2}A^{-1}_{\tau}\left\|R(\nabla_w \uH)\right\|_{0, 
\tau}^{2} 
\\ \label{eq:est-jump-eta} 
&& +\sum_{e\in\mathcal{E}}h_{\tau}(A_e^{\max})^{-1} \|J_{n,e}(A\nabla_w 
\uH) \|_{e}^2\Bigg),\ \ 
\end{eqnarray}
where the constant depends on the shape regularity of $\mathcal{T}$.
\end{lemma}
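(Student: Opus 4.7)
The plan is to derive the estimate by testing the discrete equation \eqref{eq:pb-mwg} with a carefully chosen function built from the nonconforming interpolant $I_\mathcal{T}$ of Lemma~\ref{lem:est-nc-interp}, exploiting the fact that functions in $V^{\rm nc}(\mathcal{T})$ have vanishing $P_{\ell-1}$-moments of the jump across every interface.

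First I would take $v_0 := u_0 - I_\mathcal{T} u_0 \in V^{\scriptscriptstyle{\rm DG}}(\mathcal{T})$, where $u_0$ denotes the interior component of $\uH$, and form the weak function $v_\mathcal{T} = \{v_0, Q_b\{v_0\}\}\in V_0(\mathcal{T})$. Because $I_\mathcal{T} u_0 \in V^{\rm nc}(\mathcal{T})$, the defining condition \eqref{eq:nc-continuity} gives $Q_b\jump{I_\mathcal{T} u_0}{}=0$ on every $e\in\mathcal{E}$, so $Q_b\jump{v_0}{} = Q_b\jump{\uH}{}$ on every $e\in\mathcal{E}$. Inserting $v_\mathcal{T}$ into \eqref{eq:pb-mwg} and isolating the stabilization portion of $a_\mathcal{T}$ from \eqref{eq:bilinear-mwg} yields the identity
\[
\sum_{\tau\in\mathcal{T}} h_\tau^{-1}\|Q_b\jump{\uH}{}\|^2_{\partial\tau}
= (f, v_0) - \sum_{\tau\in\mathcal{T}}(A\nabla_w \uH, \nabla_w v_\mathcal{T})_\tau,
\]
which serves as the starting point of the estimate.

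Next, I would rewrite the right-hand side using the equivalent bilinear form \eqref{eq:pb-mwg2} and apply elementwise integration by parts. This produces the element residual $R(\nabla_w \uH)=f+\nabla\cdot(A\nabla_w \uH)$ tested against $v_0$, face contributions involving $\{A\nabla_w \uH\cdot\boldsymbol{n}\}$ paired with $\jump{v_0}{}$, and boundary flux-jump terms $J_{n,e}(A\nabla_w \uH)$ paired with $\{v_0\}$. Any discrepancy between $\nabla_w\uH$ and $\nabla_h\uH$ that appears along the way is controlled by Lemma~\ref{lem:est-diff-weak} and absorbed back into the left-hand side. Applying Cauchy--Schwarz elementwise together with Lemma~\ref{lem:est-nc-interp} (with $|a|=0$) and a scaled trace inequality gives, for each $\tau$,
\[
\|v_0\|_\tau + h_\tau^{1/2}\|v_0\|_{\partial\tau}
\lesssim h_\tau\Bigl(\sum_{e\in\mathcal{E}^{\mathrm{int}}(\omega(\tau))} h_\tau^{-1}\|Q_b\jump{\uH}{}\|_e^2\Bigr)^{1/2}.
\]
Distributing the weights $A_\tau^{-1/2}$ with the residual term and $(A_e^{\max})^{-1/2}$ with the normal-jump term, the two indicators on the right of \eqref{eq:est-jump-eta} appear naturally, while the complementary factor of $A_e^{\max}h_\tau^{-1}\|Q_b\jump{\uH}{}\|$ produced by Cauchy--Schwarz is absorbed back into the left via Young's inequality with a sufficiently small parameter.

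The main obstacle will lie in the integration-by-parts step: the weak-function setting means $v_b = Q_b\{v_0\}\neq v_0|_{\partial\tau}$, so the weak-gradient identity \eqref{eq:mwg} produces extra edge contributions that must be grouped with the $Q_b$-projected jumps and the normal-flux jumps so that only the indicators in \eqref{eq:eta} remain on the right. A second technical point is that the estimate in Lemma~\ref{lem:est-nc-interp} is patch-based, so each edge may appear in several neighboring patches; shape-regularity of $\mathcal{T}$ ensures this multiplicity is bounded uniformly, and the final summation over $\tau\in\mathcal{T}$ produces the stated inequality with a constant depending only on the shape-regularity.
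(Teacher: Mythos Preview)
Your proposal is correct and follows essentially the same path as the paper: test the discrete equation with $v_0=\uH-I_{\mathcal T}\uH$, exploit $Q_b\jump{I_{\mathcal T}\uH}=0$ so the stabilization collapses to $\sum_\tau h_\tau^{-1}\|Q_b\jump{\uH}\|_{\partial\tau}^2$, integrate by parts to extract the element residual and the normal flux jump, and close with Cauchy--Schwarz together with Lemma~\ref{lem:est-nc-interp}. The only difference is that the paper bypasses \eqref{eq:pb-mwg2} entirely: it applies the weak-gradient definition~\eqref{eq:mwg} directly to the test function, which converts $(A\nabla_w\uH,\nabla_w v_{\mathcal T})_\tau$ in one step into $-(v_0,\nabla\!\cdot(A\nabla_w\uH))_\tau+\langle Q_b\{v_0\},A\nabla_w\uH\!\cdot\!\bm n\rangle_{\partial\tau}$ and, after summing over $\tau$, yields only the residual and the normal jump $J_{n,e}$ paired with $\{v_0\}$; no $\{A\nabla_w\uH\!\cdot\!\bm n\}$--$\jump{v_0}$ term or $\nabla_w\!-\!\nabla_h$ correction ever appears, so Lemma~\ref{lem:est-diff-weak} and the absorption argument you anticipate are not needed.
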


\begin{proof} 
For simplicity, we denote $A_e := A_e^{\max}$ in the proof. 
Let that $I_{\mathcal{T}} \uH\in V^{\rm nc}(\mathcal{T})$ be the 
nonconforming interpolation defined in Lemma \ref{lem:est-nc-interp}, using the 
definition of the modified weak derivative \eqref{eq:mwg} element-wisely, we have
\begin{eqnarray*} 
 \sum_{e\in\mathcal{E}}   A_e h_\tau^{-1}\|Q_b\jump{u_0}{}\|_e^2
&=& \sum_{e\in\mathcal{E}}   A_e h_\tau^{-1}\langle Q_b\jump{u_0}{}, 
Q_b\jump{\uH-I_{\mathcal{T}} \uH}{}\rangle_e
\\
&=&  (f, \uH-I_{\mathcal{T}} \uH)_{\mathcal{T}} - 
(A\nabla_w \uH, \nabla_w(\uH-I_{\mathcal{T}} 
\uH))_{\mathcal{T}}
\\
&=& (f, \uH-I_{\mathcal{T}} \uH)_{\mathcal{T}} + 
(\nabla_h\cdot (A\nabla_w \uH), \uH-I_{\mathcal{T}} 
\uH)_{\mathcal{T}} 
\\
&&- 
\sum_{\tau\in\mathcal{T}_h}\langle(A\nabla_w \uH) \cdot\bm{n}, \{\uH-I_{\mathcal{T}} 
\uH\}\rangle_{\partial 
\tau}
\\
&=& (f +\nabla_h\cdot (A\nabla_w \uH), \uH-I_{\mathcal{T}} 
\uH)_{\mathcal{T}} 
\\
&&
- 
\sum_{e\in\mathcal{E}}
\left\langle\jump{(A\nabla_w \uH) \cdot\bm{n}}{}, \{\uH-I_{\mathcal{T}} \uH\}
\right\rangle_e. 
\end{eqnarray*}
By the Cauchy-Schwarz inequality, we have
\begin{eqnarray*}
\begin{aligned}
&\sum_{e\in\mathcal{E}}  A_e h_\tau^{-1}\|Q_b\jump{u_0}{}\|_e^2
\\
\leqslant&\left(\sum_{\tau\in\mathcal{T}}
h_{\tau}^2 A_{\tau}^{-1}\|f +\nabla\cdot(A_{\tau}\nabla_w \uH)\|^2_{\tau}\right)^{1/2}
\left(\sum_{\tau\in\mathcal{T}}h_{\tau}^{-2}
\|A^{1/2}_\tau(\uH-I_{\mathcal{T}} 
\uH)\|^2_{\tau}\right)^{1/2}
  \\
 &\;+ 
\left(\sum_{e\in\mathcal{E}}h_{\tau}A_e^{-1}\bigl\|\jump{A\nabla_w  
\uH\cdot \bm{n}}{}\bigr\|^2_e\right)^{1/2}
\left(\sum_{e\in\mathcal{E}} h_{\tau}^{-1} A_e\|\uH-I_{\mathcal{T}} 
\uH\|^2_e\right)^{1/2}
\\
\lesssim&\;\eta(\nabla_w \uH, \mathcal{T}) 
\cdot\left(\sum_{e\in\mathcal{E}}  h_\tau^{-1} A_e 
\|Q_b\jump{u_0}{}\|_e^2\right)^{1/2},
\end{aligned}
\end{eqnarray*}
lastly applying Lemma \ref{lem:est-nc-interp} yields the result.
\end{proof}

The proof of the upper bound mainly follows the paradigm in 
\cite{Chen;Wang;Ye:2014posteriori, Chen;Holst;Xu:2009, Alonso96:385}. Without loss of generality, 
the presentation is for $d=3$. We shall use the Helmholtz decomposition of  $\nabla u - \nabla_w \uH$.
 
\begin{theorem}[Upper Bound]
\label{thm:est-reliability}
Let $u$ be the solution of \eqref{eq:pb-model} and $\uH \in 
V_0(\mathcal{T})$ be the solution of \eqref{eq:pb-mwg}, then 
\begin{eqnarray}
\label{eq:reliability}
\|A^{1/2}(\nabla u-\nabla_{w} u_{\mathcal{T}})\|_{\mathcal{T}}^2\leqslant 
C_{U}\eta^2(\nabla_w \uH, \mathcal{T}),
\end{eqnarray} 
where the constant $C_{U}$ depends on the shape regularity of $\mathcal{T}$ and the 
ratio of the coefficient $A$ across elements.
\end{theorem}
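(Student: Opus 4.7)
The strategy is the Helmholtz-decomposition paradigm of Alonso, Chen--Holst--Xu, and Chen--Wang--Ye, adapted to the weak gradient setting, combined with the partial orthogonality of Lemma \ref{lem:PartialOrthogonality} and the transfer bounds of Lemmas \ref{lem:est-diff-weak} and \ref{lem:est-jump}. First I would apply a Helmholtz decomposition to the error flux
\[
A(\nabla u - \nabla_w\uH) = A\nabla\psi + \nabla\times\bm{\phi}
\]
with $\psi\in H_0^1(\Omega)$ and $\bm{\phi}$ in a suitable space with appropriate boundary conditions, chosen so that the two components are orthogonal in the $A^{-1}$-inner product and each is stably bounded by $\|A^{1/2}(\nabla u - \nabla_w\uH)\|$ with constants depending only on element-wise ratios of $A$. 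Testing against $\nabla u - \nabla_w\uH$ reduces the reliability estimate to bounding a curl-free contribution and a solenoidal contribution separately.

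For the curl-free contribution $(A\nabla\psi,\nabla u - \nabla_w\uH)_{\mathcal{T}}$, I would insert a Scott--Zhang interpolant $\psi_I \in V^{\rm c}(\mathcal{T})\subset H_0^1(\Omega)$. By the partial orthogonality of Lemma \ref{lem:PartialOrthogonality}, $\psi_I$ drops out, reducing the expression to $(A\nabla u - A\nabla_w\uH, \nabla(\psi - \psi_I))_{\mathcal{T}}$. Using the weak form $(A\nabla u,\nabla(\psi - \psi_I))=(f,\psi - \psi_I)$ and elementwise integration by parts on the $A\nabla_w\uH$ term---legitimate because $\psi-\psi_I \in H_0^1(\Omega)$ is continuous across edges---produces precisely the element residual $R(\nabla_w\uH)$ and the normal flux jump $J_{n,e}(A\nabla_w\uH)$ of \eqref{eq:eta}. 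Standard $h$-weighted Scott--Zhang estimates combined with Cauchy--Schwarz then yield a bound of the form $\eta(\nabla_w\uH,\mathcal{T})\,\|A^{1/2}\nabla\psi\|$.

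For the solenoidal contribution $(\nabla\times\bm{\phi},\nabla u - \nabla_w\uH)_{\mathcal{T}}$, I would use $u\in H_0^1(\Omega)$ together with the boundary condition on $\bm{\phi}$ to annihilate the $\nabla u$-test after integration by parts, then split $\nabla_w\uH = \nabla_h\uH + (\nabla_w\uH - \nabla_h\uH)$. Elementwise integration by parts on the $\nabla_h\uH$ piece, paired against $\bm{\phi}-\bm{\phi}_I$ for a suitable conforming interpolant $\bm{\phi}_I$, generates the tangential jump $J_{t,e}$ appearing in \eqref{eq:eta}. Cauchy--Schwarz combined with Lemma \ref{lem:est-diff-weak} controls the correction $(\nabla\times\bm{\phi}, \nabla_w\uH - \nabla_h\uH)$ by $\|A^{-1/2}\nabla\times\bm{\phi}\|$ times $\bigl(\sum_e h_\tau^{-1}\|Q_b\jump{\uH}{}\|_e^2\bigr)^{1/2}$, and Lemma \ref{lem:est-jump} absorbs the latter into $\eta(\nabla_w\uH,\mathcal{T})$. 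The Helmholtz stability then cancels the trailing factor $\|A^{1/2}(\nabla u - \nabla_w\uH)\|$ from both halves to yield \eqref{eq:reliability}.

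The main obstacle I anticipate is reconciling the tangential jumps: the estimator records $J_{t,e}(\nabla_w\uH)$, whereas integration by parts against a smooth curl naturally produces $J_{t,e}(\nabla_h\uH)$. The discrepancy consists of edge terms proportional to $Q_b\jump{\uH}{}$, whose clean absorption into $\eta(\nabla_w\uH,\mathcal{T})$ relies on a second application of Lemmas \ref{lem:est-diff-weak} and \ref{lem:est-jump} in tandem, so that jump norms of $\uH$ get dominated by the residual and normal-flux pieces of the estimator. A secondary technical point is that the Helmholtz decomposition must be chosen so that the piecewise-constant coefficient $A$, possibly with large contrast across elements, enters constants only through element-wise ratios, keeping $C_U$ of the stated form.
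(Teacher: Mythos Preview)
Your plan is essentially the paper's proof: Helmholtz decomposition, partial orthogonality (Lemma \ref{lem:PartialOrthogonality}) plus Cl\'ement interpolation for the irrotational part, and an $H(\mathbf{curl})$-interpolant plus Lemmas \ref{lem:est-diff-weak}--\ref{lem:est-jump} for the solenoidal part. The curl-free half matches the paper exactly.

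For the solenoidal half the paper takes a slightly different, cleaner route than your split. It keeps $\nabla_w\uH$ intact, first observing from the weak-gradient definition that $(\nabla_w\uH,\nabla\times\bm{\xi}_{\mathcal T})_{\mathcal T}=0$ for any conforming $\bm{\xi}_{\mathcal T}\in\bm H(\mathbf{curl})$ (because $\nabla\times\bm{\xi}_{\mathcal T}$ is divergence-free with continuous normal component and $Q_b\{\uH\}$ is single-valued). This lets the interpolant be inserted \emph{before} any splitting; integrating by parts then yields the tangential jump $J_{t,e}(\nabla_w\uH)$ directly---exactly what the estimator contains, so no reconciliation is needed---together with an element term $\|\nabla_h\times(\nabla_w\uH)\|_\tau$, which is handled by writing $\nabla_h\times(\nabla_w\uH)=\nabla_h\times(\nabla_w\uH-\nabla_h\uH)$ and invoking an inverse inequality plus Lemmas \ref{lem:est-diff-weak}--\ref{lem:est-jump}.

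Your variant (split $\nabla_w\uH=\nabla_h\uH+(\nabla_w-\nabla_h)\uH$ first, then insert $\bm{\phi}_I$ against $\nabla_h\uH$) is workable but has one step you should make explicit: the insertion is \emph{not} free, since $(\nabla\times\bm{\phi}_I,\nabla_h\uH)_{\mathcal T}\neq 0$. The clean way to justify it is again the identity $(\nabla\times\bm{\phi}_I,\nabla_w\uH)_{\mathcal T}=0$, which gives $(\nabla\times\bm{\phi}_I,\nabla_h\uH)_{\mathcal T}=(\nabla\times\bm{\phi}_I,\nabla_h\uH-\nabla_w\uH)_{\mathcal T}$, and this residual is then absorbed by stability of the interpolant together with Lemmas \ref{lem:est-diff-weak}--\ref{lem:est-jump}. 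Once that is in place, your anticipated ``reconciliation'' of $J_{t,e}(\nabla_h\uH)$ versus $J_{t,e}(\nabla_w\uH)$ also goes through by the same pair of lemmas. In short, both routes rely on the same weak-gradient orthogonality; the paper exploits it earlier and thereby avoids the tangential-jump mismatch you flag.
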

\begin{proof} We first give an outline of our proof. The following Helmholtz decomposition
(\cite{Dari;Duran;Padra;Vampa:1996posteriori,
Carstensen;Bartels;Jansche:2002posteriori}, see also~\cite[Lemma 
4.2]{Chen;Wang;Ye:2014posteriori}, Chapter I Theorem 3.4 and Remark 
3.10 in~\cite{Girault;Raviart:1986Finite}) commonly used for nonconforming elements is applied to $\nabla u - \nabla_w \uH$:
\begin{equation}
\label{eq:decomp}
\nabla u - \nabla_w \uH = \nabla \psi + A^{-1} \nabla\times \bm{\xi},
\end{equation}
where $\psi\in H^1_0(\Omega)$, and $\bm{\xi}\in \bm{H}(\mathbf{curl}; \Omega)$. 
The 
decomposition satisfies
\begin{equation}
\label{eq:decomp-stability}
\bigl\|A^{1/2} (\nabla u- \nabla_w \uH) \bigr\|_{\mathcal{T}}^2
= \bigl\|A^{1/2}\nabla \psi \bigr\|_{\mathcal{T}}^2 + 
\bigl\|A^{-1/2}\nabla\times \bm{\xi} \bigr\|_{\mathcal{T}}^2.
\end{equation}
Moreover, $\bm{\xi}$ can be chosen to be divergence-free so that it is piecewise 
$\bm{H}^1$-smooth (e.g., see~\cite[Appendix A]{Cai;Cao:2015recovery-based}) such 
that 
\begin{equation}
\label{eq:decomp-curl}
\|A^{-1/2} \bm{\nabla} \bm{\xi}\|_{\mathcal{T}}\lesssim \|A^{-1/2} \nabla\times 
\bm{\xi}\|_{\mathcal{T}}.
\end{equation}
\begin{enumerate}
 \item For $\psi$ in \eqref{eq:decomp}, we shall construct a quasi-interpolant to $V^{\rm c}(\mathcal T)$ and use the orthogonality \eqref{eq:orthogonality-partial}. This part is similar to the \emph{a posteriori} error estimator of the conforming finite element which can be controlled by the element-wise residual and normal jump of the numerical flux. 

 \item For $\bm \xi$ in \eqref{eq:decomp}, we shall construct a similar quasi-interpolant, but this time leads to the jump of tangential derivatives. Comparing to results for WG~\cite{Chen;Wang;Ye:2014posteriori}, the element-wise $\nabla_h \times \nabla_w u_{\mathcal T}$ will be bounded by the stabilization term.
 \end{enumerate}

A key result (Lemma \ref{lem:est-jump}) is applied here to bound the stabilization term by the element-wise residual and the normal jump of the flux, where the quasi-interpolant 
to $V^{\rm nc}(\mathcal T)$ is used. As a result, we turn our focus onto the error without the stabilization as follows
\begin{equation}
\label{eq:est-I}
\bigl\|A^{1/2} (\nabla u- \nabla_w \uH) \bigr\|_{\mathcal{T}}^2
=\underbrace{\bigl(A(\nabla u- \nabla_w \uH), 
\nabla \psi\bigr)_{\mathcal{T}}}_{=:\mathfrak{I}_C}+
\underbrace{\bigl(\nabla u- \nabla_w \uH, 
\nabla\times \bm{\xi}\bigr)_{\mathcal{T}}}_{=:\mathfrak{I}_N}.
\end{equation}
Let $\psi_{\mathcal{T}}\in V^{\rm c}(\mathcal{T})$ be the robust Cl\'{e}ment-type 
quasi-interpolation (e.g., see 
\cite{Petzoldt:2002posteriori}) for $\psi$ such that 
the following estimates holds, 
\begin{equation}
\label{eq:est-interp-H1}
\begin{gathered}
A_{\tau}\left\|\psi-\psi_{\mathcal{T}}\right\|_{\tau}^2 
\lesssim 
h_{\tau}^2\big\|A^{1/2} \nabla \psi\big\|_{\omega(\tau)}^2,
\\
\text{ and }\quad A_{e}^{\max}\left\|\psi-\psi_{\mathcal{T}}\right\|_{e}^2 \lesssim
h_{\tau}\big\|A^{1/2} \nabla \psi\big\|_{\omega({e})}^2.
\end{gathered}
\end{equation}
Using the partial orthogonality \eqref{eq:orthogonality-partial}, integrating by 
parts for $\mathfrak{I}_C$, Cauchy-Schwarz inequality, and estimates 
in \eqref{eq:est-interp-H1}, we have
\begin{eqnarray*}
\begin{aligned}
\mathfrak{I}_C &= \bigl(A(\nabla u- \nabla_w \uH), 
\nabla (\psi - \psi_{\mathcal{T}})\bigr)_{\mathcal{T}}
\\
&=-\big( \nabla_h\cdot A(\nabla u- \nabla_w \uH), 
\psi-\psi_{\mathcal{T}}  \big)_{\mathcal{T}} + 
\sum_{\tau\in\mathcal{T}}\langle(A (\nabla u - \nabla_w \uH)\cdot\boldsymbol{n}, 
\psi-\psi_{\mathcal{T}} \rangle_{\partial \tau}
\\
&=(f +\nabla_h\cdot (A\nabla_w \uH), \psi - \psi_{\mathcal{T}})_{\mathcal{T}} 
- \sum_{e\in\mathcal{E}}
\left\langle\jump{(A\nabla_w \uH) \cdot\bm{n}}{}, \psi - \psi_{\mathcal{T}} 
\right\rangle_e
\\
&\leq \left(\sum_{\tau\in\mathcal{T}}
h_{\tau}^2 A_{\tau}^{-1}\|f +\nabla\cdot(A\nabla_w \uH)\|^2_{\tau}\right)^{1/2}
\left(\sum_{\tau\in\mathcal{T}} h_{\tau}^{-2}A_{\tau}\|\psi - 
\psi_{\mathcal{T}}\|^2_{\tau}\right)^{1/2}
  \\
 &\;+ 
\left(\sum_{e\in\mathcal{E}}  A_e^{-1}h_\tau \bigl\|\jump{A\nabla_w  
\uH\cdot \bm{n}}{}\bigr\|^2_e\right)^{1/2}
\left(\sum_{e\in\mathcal{E}}  A_e h_\tau^{-1}
\|\psi - \psi_{\mathcal{T}}\|^2_e\right)^{1/2}
\\
&\lesssim\;\eta(\nabla_w \uH, \mathcal{T}) \|A^{1/2} \nabla \psi\big\|_{\mathcal{T}}
\leq \eta(\nabla_w \uH, \mathcal{T}) \bigl\|A^{1/2} (\nabla u- \nabla_w \uH) 
\bigr\|_{\mathcal{T}},
\end{aligned}
\end{eqnarray*}
where the constant depends on the shape-regularity of $\mathcal{T}$.

For $\mathfrak{I}_N$ in \eqref{eq:est-I}, we need a robust Cl\'{e}ment-type 
interpolation $\bm{\xi}_{\mathcal{T}}\in \bm{H}(\mathbf{curl}; \Omega)$ (see 
\cite[Theorem 4.6]{Cai;Cao:2015recovery-based}) satisfying:
\begin{equation}
\label{eq:est-interp-Hcurl}
\begin{gathered}
A_{\tau}^{-1}\left\|\bm{\xi}-\bm{\xi}_{\mathcal{T}}\right\|_{\tau}^2
\lesssim 
\sum_{\tau \subset \omega({\tau})}h_{\tau}^2\big\|A^{-1/2} \bm{\nabla} 
\bm{\xi}\big\|_{\tau}^2,
\\
\text{and} 
\quad(A_{e}^{\min})^{-1}\left\|\bm{\xi}-\bm{\xi}_{\mathcal{T}}\right\|_{e}^2 \lesssim
\sum_{\tau \subset \omega({e})}
h_{\tau}^2 \big\|A^{1/2} \bm{\nabla} \bm{\xi}\big\|_{\tau}^2.
\end{gathered}
\end{equation}
It is straightforward to verify that, by the definition of the modified 
weak gradient \eqref{eq:mwg} and the fact that $\nabla\times 
\bm{\xi}_{\mathcal{T}}\in \bm{H}(\mathrm{div}; \Omega)$, we have
\[
\begin{aligned}
\bigl(\nabla_w  \uH, 
\nabla\times \bm{\xi}_{\mathcal{T}}\bigr)_{\mathcal{T}}
&= -\bigl(u_0, 
\nabla_h\cdot(\nabla\times \bm{\xi}_{\mathcal{T}})\bigr)_{\mathcal{T}}
+\sum_{\tau \in \mathcal{T}} \langle
Q_b\{u_0\}, \nabla\times \bm{\xi}_{\mathcal{T}}\cdot \bm{n}
\rangle_{\partial \tau}=0.
\end{aligned}
\]
Consequently, as $\nabla u\in \bm{H}(\mathbf{curl}; \Omega)$ and $\bm{\xi}_{\mathcal{T}}$ can 
be inserted to $\mathfrak{I}_N$  obtain the following:
\begin{equation}
\label{eq:curl-gradwu}
\mathfrak{I}_N 
=-\bigl(\nabla_w \uH, 
\nabla\times \bm{\xi} \bigr)_{\mathcal{T}}
= -\bigl(\nabla_w \uH, 
\nabla\times (\bm{\xi}-\bm{\xi}_{\mathcal{T}})\bigr)_{\mathcal{T}}.
\end{equation}
Upon an integration by parts, $\nabla_h\times (\nabla_w \uH)$ will appear in each element, in general, this is not zero as $\nabla_w \uH$ is computed in the sense of 
 distribution. To handle this term, simply notice that $\nabla_h\times (\nabla_h u_0) = 0$ element-wise, by Lemmas 
\ref{lem:est-diff-weak} and  \ref{lem:est-jump}, and an inverse inequality
\begin{eqnarray}\nonumber
\sum_{\tau \in \mathcal{T}}h_{\tau}^2 \|A^{1/2}\nabla\times (\nabla_w \uH)\|_{\tau}^2
&=&
\sum_{\tau \in \mathcal{T}}h_{\tau}^2 \|A^{1/2}\nabla\times (\nabla_h u_0 - 
\nabla_w \uH)\|_{\tau}^2
\\ \nonumber
&\lesssim& \|A^{1/2}(\nabla_h u_0 - \nabla_w \uH)\|_{\mathcal{T}} 
\\  \label{eq:est-curl-gradwu}
&\lesssim& \eta(\nabla_w \uH, \mathcal{T}).  
 \end{eqnarray}
Integrating by parts on \eqref{eq:curl-gradwu}, applying \eqref{eq:est-curl-gradwu}, 
we have
\[
\begin{aligned}
&\bigl(\nabla_w \uH, 
\nabla\times (\bm{\xi}-\bm{\xi}_{\mathcal{T}})\bigr)_{\mathcal{T}}
\\
=& \; \bigl(\nabla\times (\nabla_w \uH), 
\bm{\xi}-\bm{\xi}_{\mathcal{T}} \bigr)_{\mathcal{T}}
- \sum_{e\in \mathcal{E}} \langle \jump{\bm{n}\times \nabla_w \uH}{},
\bm{\xi}-\bm{\xi}_{\mathcal{T}}  \rangle_e
\\
\leq & \; \left(\sum_{\tau\in\mathcal{T}}
h_{\tau}^2 A_{\tau}\|\nabla\times (\nabla_w \uH)\|^2_{\tau}\right)^{1/2}
\left(\sum_{\tau\in\mathcal{T}} h_{\tau}^{-2}A_{\tau}^{-1}
\|\bm{\xi}-\bm{\xi}_{\mathcal{T}}\|^2_{\tau}\right)^{1/2}
  \\
 &\quad+ 
\left(\sum_{e\in\mathcal{E}}  h_\tau A_e^{\min} 
\bigl\|\jump{\nabla_w \uH\times \bm{n}}{}\bigr\|^2_e\right)^{1/2}
\left(\sum_{e\in\mathcal{E}}   h_\tau^{-1}(A_e^{\min})^{-1}
\|\bm{\xi}-\bm{\xi}_{\mathcal{T}}\|^2_e\right)^{1/2}
\\
\lesssim &\; \eta(\nabla_w \uH, \mathcal{T}) \|A^{-1/2} \bm{\nabla} \bm{\xi}\|_{\mathcal{T}}.
\end{aligned}
\]
Finally by \eqref{eq:decomp-curl}, the reliability \eqref{eq:reliability} follows.
\end{proof}

By Lemma \ref{lem:est-jump} and Theorem \ref{thm:est-reliability}, the upper bound of the 
error \eqref{eq:error} follows.
\begin{corollary}
Let $u$ be the solution of \eqref{eq:pb-model} and $\uH \in 
V_0(\mathcal{T})$ be the solution of \eqref{eq:pb-mwg}, then the 
error \eqref{eq:error}
\begin{eqnarray}
\tnorm{u - \uH }_{\mathcal{T}}^2\lesssim\eta^2(\nabla_w \uH, \mathcal{T}),
\end{eqnarray} 
where the constant depends on the shape regularity of $\mathcal{T}$ and the 
ratio of the coefficient $A$ across neighboring elements.
\end{corollary}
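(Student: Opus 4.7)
The plan is to split the error norm \eqref{eq:error} into its two natural summands and bound each by an estimate already established. Writing
\[
\tnorm{u - \uH}^2_{\mathcal{T}} = \bigl\|A^{1/2}(\nabla u - \nabla_{w} \uH) \bigr\|^2 + \sum_{\tau\in \mathcal{T}} h_\tau^{-1} \bigl\|Q_b\jump{u-\uH}{}\bigr\|^2_{\partial\tau},
\]
the two terms will be controlled by $\eta^2(\nabla_w \uH, \mathcal{T})$ separately.

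For the gradient part, Theorem \ref{thm:est-reliability} applies verbatim and yields the desired bound with constant $C_U$; nothing further is needed there.

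For the stabilization part, I would exploit the regularity of the continuous solution: since $u\in H_0^1(\Omega)$ is continuous across interior interfaces and vanishes on $\partial\Omega$, one has $\jump{u}{}|_e = 0$ for every $e\in \mathcal{E}$, and hence $Q_b\jump{u-\uH}{}|_e = -Q_b\jump{\uH}{}|_e$. Converting the sum over element boundaries $\partial\tau$ into a sum over $e\in \mathcal{E}$ (each interior face is counted twice, with shape regularity providing comparable $h_\tau$ on the two sides) reduces the task to bounding $\sum_{e\in \mathcal{E}} h_\tau^{-1}\bigl\|Q_b\jump{\uH}{}\bigr\|^2_e$. Since $A$ is piecewise constant on $\mathcal{T}_0$ with a uniform positive lower bound, the factor $A_e^{\max}$ appearing on the left-hand side of Lemma \ref{lem:est-jump} may be absorbed into the generic constant---this is precisely where the ``ratio of $A$ across elements'' dependence announced in the statement enters---and Lemma \ref{lem:est-jump} concludes the estimate by $\eta^2(\nabla_w \uH, \mathcal{T})$.

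There is no substantive obstacle in this argument; the corollary is essentially a bookkeeping combination of Theorem \ref{thm:est-reliability} and Lemma \ref{lem:est-jump}. The only point meriting attention is the identification $Q_b\jump{u-\uH}{} = -Q_b\jump{\uH}{}$, which relies directly on the interior continuity and the homogeneous boundary values of $u$ and allows the stabilization component of the total error to be expressed purely in terms of the discrete solution, at which point the previously proved jump bound takes over.
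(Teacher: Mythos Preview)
Your argument is correct and matches the paper's own approach exactly: the corollary is stated immediately after Theorem \ref{thm:est-reliability} with the one-line justification ``By Lemma \ref{lem:est-jump} and Theorem \ref{thm:est-reliability}, the upper bound of the error \eqref{eq:error} follows.'' Your additional remarks on why $\jump{u}{}=0$ and on absorbing the $A_e^{\max}$ weight simply spell out what the paper leaves implicit.
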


\subsection{Efficiency}

The standard bubble function technique is opted (see 
\cite{Verfurth:2013}) to 
derive the efficiency bound, while the tangential jump part's proof follows a 
standard argument of the \emph{a posteriori} error estimation for standard WG 
discretization in~\cite{Chen;Wang;Ye:2014posteriori}. As the proofs are standard, 
we only present the results here.

For $v_{\mathcal{T}}\in V(\mathcal{T})$ and $\tau\in \mathcal{T}$, the oscillation is defined to be
\begin{equation}\label{eq:osc1}
\mathrm{osc}_{\mathcal{T}}^2(v_{\mathcal{T}}, \tau):=h_{\tau}^2\|(Id-Q_m) R(\nabla_w v_{\mathcal{T}})\|^2_{\tau},
\end{equation}
where $Q_m$ denotes the $L^2$ projection onto the set of either the piecewise 
$P_m(\tau)$ on $\tau\in\mathcal{T}$, where 
$m=\ell-2$ if $\ell\geq 2$ and $m=0$ when $\ell=1$.

For any subset $\mathcal{M}\subseteq \mathcal{T}$, we define
\begin{equation}\label{eq:osc}
\mathrm{osc}_{\mathcal{T}}^2(v_{\mathcal{T}}, \mathcal{M}):=\sum_{\tau\in 
\mathcal{M}}\mathrm{osc}_{\mathcal{T}}^2(v_{\mathcal{T}}, \tau).
\end{equation}
Note that, on $\tau\in \mathcal{T}$, using the properties of the $L^2$-projection,  
the oscillation are dominated by the estimator; namely,
\begin{equation}\label{eq:osc-eta}
\mathrm{osc}_{\mathcal{T}}^2(v_{\mathcal{T}}, \tau)\leqslant \eta_{\mathcal{T}}^2(v_{\mathcal{T}}, \tau), \ 
\mbox{for}\ \ v_{\mathcal{T}}\in V(\mathcal{T}).
\end{equation}
\begin{theorem}(Local lower bound)
\label{thm:est-efficiency} 
For all $\tau\in \mathcal{T}$, there holds
\begin{eqnarray}\label{eq:LB}
C_{L}\eta(\nabla_w \uH, \tau) \leqslant 
\|A^{1/2}(\nabla u-\nabla_w \uH)\|_{\mathcal{T}} + 
\operatorname{osc}_{\mathcal{T}}^2(\uH, \mathcal{T})
\end{eqnarray}
where the constant $C_{L}$ only depends on the shape regular of $\mathcal{T}$.
\end{theorem}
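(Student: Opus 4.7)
The plan is to apply the standard bubble function technique of Verf\"urth \cite{Verfurth:2013} element-wise to each of the three pieces of $\eta(\nabla_w \uH, \tau)$ defined in \eqref{eq:eta}. The one conceptual ingredient that makes the argument simpler than for IPDG is that the estimator is phrased directly in terms of $\nabla_w \uH$. Consequently, integration by parts against a test function supported inside an element (for the residual) or in a patch $\omega(e)$ (for the jumps) produces, after the substitution $f = -\nabla\cdot(A\nabla u)$, exactly the bulk expression $(A(\nabla u - \nabla_w \uH), \nabla v)$, so no separate comparison of $\nabla_w \uH$ versus $\nabla_h \uH$ is required and Lemma \ref{lem:est-diff-weak} is not invoked for the lower bound.

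Step one handles the element residual $h_\tau^2 A_\tau^{-1}\|R(\nabla_w \uH)\|_\tau^2$. I would first replace $R(\nabla_w \uH)$ by its projection $Q_m R(\nabla_w \uH)$ at the cost of $\operatorname{osc}_{\mathcal{T}}^2(\uH, \tau)$, then lift the projected residual via the element bubble $v := b_\tau Q_m R(\nabla_w \uH)$, and use the usual norm equivalence $\|Q_m R(\nabla_w \uH)\|_\tau^2 \lesssim (R(\nabla_w \uH), v)_\tau$. Because $v$ vanishes on $\partial\tau$, substituting $f = -\nabla\cdot(A\nabla u)$ and integrating by parts on the divergence term give
\begin{equation*}
\big(R(\nabla_w \uH), v\big)_\tau = \big(A(\nabla u - \nabla_w \uH), \nabla v\big)_\tau,
\end{equation*}
and the Cauchy--Schwarz step, combined with the inverse estimate $\|\nabla v\|_\tau \lesssim h_\tau^{-1}\|v\|_\tau$, closes the bound by the local energy error plus oscillation.

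Step two handles the normal flux jump $h_\tau (A_e^{\max})^{-1}\|J_{n,e}(A\nabla_w \uH)\|_e^2$. Using a face bubble $b_e$ combined with a polynomial extension of $J_{n,e}(A\nabla_w \uH)$ into the patch $\omega(e) = \tau_1 \cup \tau_2$, I build a test function supported in $\omega(e)$, apply element-wise integration by parts, and reduce the $L^2(e)$ norm of the jump to the element residual pieces already controlled in step one, plus the bulk pairing $(A(\nabla u - \nabla_w \uH), \nabla v)_{\omega(e)}$ after substituting the PDE. Scaling and the trace inequality restore the edge weight $h_\tau (A_e^{\max})^{-1}$ appearing in $\eta$. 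For the tangential jump, the key observation is that $\nabla u$ carries no tangential jump across any interior face, hence $\jumpat{\nabla_w \uH\cdot \bm t_e}{e} = -\jumpat{(\nabla u - \nabla_w \uH)\cdot \bm t_e}{e}$, with the analogous identity for $\nabla_w \uH \times \bm n_e$ when $d=3$. Lifting this jump by a tangential edge bubble as in \cite{Chen;Wang;Ye:2014posteriori} bounds it directly by the local energy error on $\omega(e)$, with weight $h_\tau A_e^{\min}$ recovered by scaling.

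The main obstacle is bookkeeping rather than novelty: one must keep track of the weights $A_\tau$, $A_e^{\max}$, $A_e^{\min}$ so that the patchwise estimates assemble into the local bound of the stated form, and one must verify that the argument never requires a global test function so that the inequality is truly local up to the patch $\omega(\tau)$. The latter is then relaxed to the global right-hand side in \eqref{eq:LB}. Since $\omega(\tau)$ contains a bounded number of elements by shape regularity, summing the face contributions with the element contribution does not degrade the constant, and the proof concludes by combining the three pieces and invoking the dominance \eqref{eq:osc-eta} whenever the oscillation is absorbed. The resulting $C_L$ depends only on shape regularity and on $A_e^{\max}/A_e^{\min}$ through the jump-weight manipulations.
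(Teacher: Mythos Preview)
Your proposal is correct and follows exactly the approach the paper indicates: the paper does not give a detailed proof but states that the efficiency is obtained via the standard Verf\"urth bubble function technique for the residual and normal jump, together with the argument from \cite{Chen;Wang;Ye:2014posteriori} for the tangential jump, which is precisely the three-step outline you describe. Your observation that working directly with $\nabla_w \uH$ avoids invoking Lemma~\ref{lem:est-diff-weak} is a useful clarification beyond what the paper spells out.
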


\section{An Adaptive Modified Weak Galerkin Method}
\label{sec:convergence}
In this section, first we introduce an adaptive modified weak Galerkin method (AmWG). Next, a quasi-orthogonality is proved and is further exploited to derive the convergence of AmWG. At last, we shall present the discrete reliability and propose the quasi-optimality of the AmWG. 

Henceforth, the polynomial degree is chosen to be $\ell=1$, the reason for this is that we shall borrow some classical results for Crouzeix-Raviart element to establish a penalty parameter-free convergence. Notice that the penalty parameters in schemes based on discontinuous approximation spaces are indispensable not only for the coercivity (Remark \ref{remark:ipdg}), but also for the convergence of adaptive algorithms due to the lack of a direct orthogonality result (see e.g.,~\cite{Bonito;Nochetto:2010Quasi-optimal}). For a similar nonconforming method~\cite{Owens:2014Quasi-Optimal}, one still needs to choose a sufficiently large penalty parameter to prove the convergence. By bridging the connections between the lowest order WG method and Crouzeix-Raviart element, we are able to show the convergence without the presence of a sufficiently large penalty.

\subsection{Algorithm}\label{alg:AmWG}
\begin{algorithm}[htb]
\caption{An adaptive modified weak Galerkin finite element (AmWG) cycle $[u_J, 
\mathcal{T}_J]  = \mathrm{AmWG}  (\mathcal{T}_0, f, 
\mathrm{tol},\theta)$.}
\label{alg:amwg}
\begin{algorithmic}[1]
\Require $\mathcal{T}_0$, $f$, $\texttt{tol}$, $\theta \in (0, 1)$.
\Ensure $\mathcal{T}_J$, $u_J$.

\State{$\eta = 1, k = 0.$}

\While{$\eta\geqslant \texttt{tol}$}

\State{\textbf{SOLVE}:}
Solve \eqref{eq:pb-mwg} on $\mathcal{T}_k$ to 
get the solution $u_k$;

\State{\textbf{ESTIMATE}:}  Compute $\eta = \eta(\nabla_w u_k, \mathcal{T}_k 
)$;

\State{\textbf{MARK}:  } Seek a minimum cardinality $\mathcal{M}_k \subseteq 
\mathcal{T}_k$ such that
\begin{equation}
  \label{eq:mark}
\eta^2(\nabla_w u_k, \mathcal{M}_k )\geq\theta\eta^2(\nabla_w u_k, \mathcal{T}_k );
\end{equation}

\State{\textbf{REFINE}:  } Bisect/quadsect elements in 
$\mathcal{M}_k$ and the neighboring elements to form a conforming 
$\mathcal{T}_{k+1}$;

\State{$k\gets k+1$}
\EndWhile

\State{$u_J = u_k; \mathcal{T}_J = \mathcal{T}_k$.}

\end{algorithmic}
\end{algorithm} 

In the {\bf SOLVE} step, given a function ${f}\in {L}^2(\Omega)$ 
and a triangulation $\mathcal{T}$, the exact discrete solution is sought 
$u_\mathcal{T} = \mathbf{SOLVE}(\mathcal{T}, f)$. In this 
step, we assume that the discrete linear system associated with problem  
\eqref{eq:pb-mwg} can be solved exactly.

In the {\bf ESTIMATE} step, local error indicators $\{\eta(\nabla_w \uH, \tau)\}_{\tau\in\mathcal{T}}$ 
and the global estimator $\eta(\nabla_w \uH,  \mathcal{T})$ are calculated. 

In the {\bf MARK} step, a set of marked elements is obtained by the D\"orfler 
marking strategy~\cite{Dorfler:1996convergent} applied on the error indicators 
$\{\eta(\nabla_w \uH, \tau)\}_{\tau\in\mathcal{T}}$ on $\mathcal{T}$ obtained in the {\bf ESTIMATE} step. 

In the {\bf REFINE} step, different from traditional DG approaches which allow 
hanging nodes (e.g.,~\cite{Bonito;Nochetto:2010Quasi-optimal}), the marked 
elements, as well as their neighbors, are 
refined using bisection ($d=2,3$) or red-green refinement ($d=2$) while preserving the 
\emph{conformity} of the triangulation.  

In the paragraphs hereafter, the notation $\mathcal{T}_1 \leqslant \mathcal{T}_2$ 
stands for that $\mathcal{T}_2$ is a refinement of $\mathcal{T}_1$ following the 
marking strategy above, where $\mathcal{T}_1, \mathcal{T}_2\in 
\mathcal{C}(\mathcal{T}_0)$, 
and here $\mathcal{C}(\mathcal{T}_0)$ 
denotes the set of triangulations which are conforming (no hanging nodes), shape 
regular and refined from an initial triangulation $\mathcal{T}_0$. 

While showing the lemmas related to the convergence of the 
AmWG, for $\mathcal{T}, 
\mathcal{T}_*\in \mathcal{C}(\mathcal{T}_0)$ and 
$\mathcal{T}\leqslant \mathcal{T}_*$, the set of refined elements in 
$\mathcal{T}$, which become new elements in $\mathcal{T}_*$, is denoted as 
\[
\mathcal{R}_{\mathcal{T}\rightarrow\mathcal{T}_*}:=\{\tau\in \mathcal{T}: 
\;\tau\not\in \mathcal{T}_*\}\subset \mathcal{T}.
\]

Whenever the dependence of the weak gradient on two different meshes becomes relevant, the weak gradient's notation is changed accordingly to emphasize the mesh of the function defined on, for example, the piecewisely defined weak gradient is $\nabla_{w,*} v$ for any $v\in \mathcal{T}_*$. When its restriction to one element $\tau\in \mathcal T$ is of interest, $\nabla_{w,\tau} v_{\mathcal T}$ is used.

\begin{remark}
\label{remark:eta-gradw}
The reason why we opt for a notation $\eta(\nabla_w \vH, \mathcal{M})$, not $\eta(\vH, \mathcal{M})$ is as follows. As in the context of the convergence analysis, this chosen notation has a more consistent meaning when considering two meshes: one is refined from the other.
Note that for two nested triangulation $\mathcal{T}\leqslant \mathcal{T}_*$, the weak gradient of a coarse function $v_{\mathcal T}\in V(\mathcal T)$ on the fine mesh $\mathcal T_*$ is different with the weak gradient on the coarse grid $\mathcal T$. 
To be specific, on $\tau_*\subset \tau\in \mathcal{T}$ with $\tau_*\not \in \mathcal{T}$, $\nabla_{w,\tau_*} \vH$ is different from $(\nabla_{w,\tau} \vH)|_{\tau_*}$. We note that this is different from that of the piecewise gradient $\nabla_h$.
\end{remark}

\subsection{Reduction of error estimator}\label{sec:est-eta}
By $\ell=1$, the error estimator defined in \eqref{eq:eta} can be split as
\begin{equation}
\label{eq:eta2}
\eta^2(\nabla_w\vH, \tau)
:= F(f, \tau) + \tilde{\eta}^2(\nabla_w\vH, \tau)
\end{equation}
where 
\begin{eqnarray*}
\begin{aligned}
F(f, \tau):=h_\tau^2A_\tau^{-1}\|f\|_{0, \tau}^2;
\end{aligned}
\end{eqnarray*}
and
\begin{eqnarray*}
\begin{aligned}
\tilde{\eta}^2(\nabla_w\vH, \tau):=\sum_{e\subset\partial\tau}h_{\tau}\int_{e}
 \left(\big(A_e^{\max}\big)^{-1} J^2_{n,e}(A\nabla_w \vH) 
+ A_e^{\min} J^2_{t,e}(\nabla_w \vH)\right)\,\mathrm{d}s.
\end{aligned}
\end{eqnarray*}
For any subset $\mathcal{M}\subset \mathcal{T}$, define
\begin{eqnarray*}
F(f, \mathcal{M})=\sum_{\tau\in\mathcal{M}} F(f, \tau);\quad
\tilde{\eta}^2(\vH, \mathcal M)=\sum_{\tau\in\mathcal{M}}\tilde{\eta}^2(\vH, \tau).
\end{eqnarray*}

The next lemma shows the reduction of the error estimator after the mesh is refined.
On a refined mesh, the effect of changing the finite element function for $\tilde{\eta}^2(\vH, \mathcal{T})$ is as follows.
\begin{lemma}
\label{lem:est-eta-1}
For $\mathcal{T}, \mathcal{T}_*\in \mathcal{C}(\mathcal{T}_0)$ with 
$\mathcal{T}\leqslant \mathcal{T}_*$, for any $\zeta\in(0, 1)$, $v\in V(\mathcal{T})$, and $v_*\in V(\mathcal{T}_*)$, there exists a constant $C_{E}$ 
depending on the shape regularity $\mathcal{T}_{*}$ such that 
\begin{equation} 
\begin{aligned}\label{eq:etaestimate1}
\lefteqn{\tilde{\eta}^2(\nabla_{w, *}v_{*}, \mathcal{T}_*)}\\ 
&\leqslant
(1+\zeta)\tilde{\eta}^2(\nabla_{w,\tau}\vH, \mathcal{T}_{*}) 
 +C_{E}(1+\zeta^{-1})\|A^{1/2}(\nabla_{w, *}v_{*}-\nabla_{w, 
\tau}\vH)\|^2_{\mathcal{T}_{*}}.
\end{aligned}
\end{equation}
\end{lemma}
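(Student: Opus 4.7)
The plan is to establish \eqref{eq:etaestimate1} through a Young-inequality splitting of the jump terms, followed by element-local scaled trace/inverse estimates on the fine mesh $\mathcal{T}_*$. Since the normal jump $J_{n,e}(\cdot)$ and the tangential jump $J_{t,e}(\cdot)$ are both linear in their argument, I would first write, for each $e \in \mathcal{E}(\mathcal{T}_*)$,
\[
J_{n,e}(A\nabla_{w,*} v_*) = J_{n,e}(A\nabla_{w,\tau} \vH) + J_{n,e}\big(A(\nabla_{w,*} v_* - \nabla_{w,\tau} \vH)\big),
\]
and analogously for $J_{t,e}$. Applying the scalar inequality $(a+b)^2 \le (1+\zeta)a^2 + (1+\zeta^{-1}) b^2$ pointwise on $e$, then integrating and summing over all faces of $\mathcal{T}_*$, yields
\[
\tilde{\eta}^2(\nabla_{w,*} v_*, \mathcal{T}_*) \le (1+\zeta)\,\tilde{\eta}^2(\nabla_{w,\tau}\vH, \mathcal{T}_*) + (1+\zeta^{-1})\, R,
\]
where $R$ is the face-jump quantity with argument $\delta := \nabla_{w,*} v_* - \nabla_{w,\tau} \vH$.

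Next I would bound $R$ by $C_E \|A^{1/2}\delta\|^2_{\mathcal{T}_*}$. Because $\mathcal{T}_0\le\mathcal{T}\le\mathcal{T}_*$ and $A$ is $\mathcal{T}_0$-piecewise-constant, the vector field $\delta$ is a polynomial on every $\tau_*\in\mathcal{T}_*$ and $A\delta$ is too. The standard scaled inverse trace inequality for polynomials gives, for any $e\subset\partial\tau_*$,
\[
h_{\tau_*}\|p\|_e^2 \lesssim \|p\|_{\tau_*}^2,
\]
with the hidden constant depending only on the shape regularity of $\mathcal{T}_*$ and the polynomial degree. Applied edge-by-edge, this produces
\begin{align*}
h_{\tau_*}(A_e^{\max})^{-1}\|J_{n,e}(A\delta)\|_e^2 &\lesssim (A_e^{\max})^{-1}\sum_{\tau_*\subset\omega(e)} A_{\tau_*}^2 \|\delta\|_{\tau_*}^2,\\
h_{\tau_*} A_e^{\min}\|J_{t,e}(\delta)\|_e^2 &\lesssim \sum_{\tau_*\subset\omega(e)} A_e^{\min}\|\delta\|_{\tau_*}^2.
\end{align*}
Using $A_{\tau_*}\le A_e^{\max}$ in the first line to collapse $(A_e^{\max})^{-1}A_{\tau_*}^2 \le A_{\tau_*}$, and $A_e^{\min}\le A_{\tau_*}$ in the second, each right-hand side is controlled by $\|A^{1/2}\delta\|_{\tau_*}^2$. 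Summing over $e\in\mathcal{E}(\mathcal{T}_*)$ and invoking the finite overlap of the edge/face patches on a shape-regular mesh yields $R \le C_E\|A^{1/2}\delta\|^2_{\mathcal{T}_*}$.

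There is no serious obstacle here beyond bookkeeping. The only subtle point is arranging the coefficient weights $A_e^{\max}$ and $A_e^{\min}$ in the definition of $\tilde{\eta}^2$ precisely so that they absorb into the $A^{1/2}$-weighted element norm, rather than merely into the unweighted $L^2$-norm of $\delta$; this is exactly what the asymmetric choice of weights in \eqref{eq:eta} is designed to accomplish, and it is the reason the resulting constant $C_E$ depends only on the shape regularity of $\mathcal{T}_*$ and not on the global ratio of $A$ across the mesh. No special property of the weak gradient beyond the fact that $\delta$ is $\mathcal{T}_*$-piecewise-polynomial is needed, so the argument is insensitive to Remark 4.1 on the different weak gradients across meshes.
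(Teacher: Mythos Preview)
Your proposal is correct and is precisely the standard argument the paper has in mind: the paper in fact omits the proof of this lemma entirely, stating only that ``the corresponding techniques are quite standard and can be found, e.g., in \cite{CasconKreuzer08:2524}.'' Your Young-inequality splitting followed by the scaled inverse trace estimate and the coefficient bookkeeping $(A_e^{\max})^{-1}A_{\tau_*}^2\le A_{\tau_*}$, $A_e^{\min}\le A_{\tau_*}$ is exactly that standard route, and your observation that only the $\mathcal{T}_*$-piecewise-polynomial structure of $\delta$ is used is the right one.
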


We can also get the contraction of $\tilde{\eta}^2(\nabla_{w,\tau}\uH, \mathcal{T})$ if the weak flux of the solution remains invariant and is interpolated into a finer mesh refined using the D\"orfler marking strategy. 
\begin{lemma}
\label{lem:est-eta-2}
For $\mathcal{T}, \mathcal{T}_*\in \mathcal{C}(\mathcal{T}_0)$ with 
$\mathcal{T}\leqslant \mathcal{T}_*$. Let $\uH\in V_0(\mathcal{T})$ be 
the solution to \eqref{eq:pb-mwg}. For $\lambda\in(0, 1)$ defined in
 Lemma \ref{lem:estimate-f}, we have
\begin{eqnarray}
\label{eq:etaestimate2}
\tilde{\eta}^2(\nabla_{w,\tau}\uH, \mathcal{T}_{*})\leqslant
\tilde{\eta}^2(\nabla_{w,\tau}\uH, \mathcal{T})
-\lambda\tilde{\eta}^2(\nabla_{w,\tau}\uH, \mathcal{R}_{\mathcal{T}\rightarrow\mathcal{T}_*}).
\end{eqnarray}
\end{lemma}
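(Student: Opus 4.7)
My plan is to exploit that for $\ell = 1$ the modified weak gradient $\nabla_{w,\tau}\uH$ is a (constant) vector in $P_0(\tau)^d$ on every coarse element $\tau \in \mathcal{T}$, and then analyze the estimator $\tilde{\eta}^2$ edge-by-edge on the refined mesh $\mathcal{T}_*$. Combined with the refinement rule that shrinks the local mesh size on newly created elements by a uniform factor, this will give the desired contraction on $\mathcal{R}_{\mathcal{T}\to\mathcal{T}_*}$.

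First, I would classify each fine edge $e_* \in \mathcal{E}_*$ into three disjoint types: (i) $e_* \in \mathcal{E}$ with neither coarse neighbor refined; (ii) $e_*$ lies strictly inside some $\tau \in \mathcal{T}$ (a brand-new edge produced by refinement of $\tau$); (iii) $e_* \subsetneq e$ for some coarse edge $e \in \mathcal{E}$ that got subdivided. Edges of type (ii) contribute nothing to $\tilde{\eta}^2(\nabla_{w,\tau}\uH, \mathcal{T}_*)$, because the two fine elements sharing $e_*$ are both subsets of the same coarse $\tau$, so $(\nabla_{w,\tau}\uH)|_\tau$ takes identical constant values on the two sides of $e_*$ and both $J_{n,e_*}(A\nabla_{w,\tau}\uH)$ and $J_{t,e_*}(\nabla_{w,\tau}\uH)$ vanish. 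Edges of type (i) contribute exactly the same value as in $\tilde{\eta}^2(\nabla_{w,\tau}\uH, \mathcal{T})$, since data, mesh size and coefficients are unchanged.

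Edges of type (iii) are where the strict decrease happens. For $e_* \subset e \in \mathcal{E}$ shared by $\tau_1,\tau_2 \in \mathcal{T}$, the values of $\nabla_{w,\tau_i}\uH$ on each side are the same constants as on $e$, so the pointwise normal and tangential jumps on $e_*$ equal those on $e$. The coefficient-dependent weights $A_e^{\max}, A_e^{\min}$ are also preserved, because $A$ is piecewise constant on $\mathcal{T}_0$ and every $\tau_* \in \mathcal{T}_*$ is contained in a single $\mathcal{T}_0$-element. Consequently, on each such $e$,
\begin{equation*}
\sum_{e_* \subset e} h_{\tau_*} \int_{e_*}\!\!\Big((A_e^{\max})^{-1} J_{n,e}^2 + A_e^{\min} J_{t,e}^2\Big)\,\mathrm{d}s
\;=\;
\Big(\sum_{e_*\subset e} h_{\tau_*}|e_*|\Big)\cdot\Big((A_e^{\max})^{-1} J_{n,e}^2 + A_e^{\min} J_{t,e}^2\Big),
\end{equation*}
and the bisection/quadrisection rule together with shape regularity yields $h_{\tau_*} \leq q\, h_\tau$ for a uniform $q \in (0,1)$ whenever $\tau \supset e$ is refined. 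Hence the contribution of the sub-edges of $e$ is at most $q$ times the coarse contribution of $e$ to $\tilde{\eta}^2(\nabla_{w,\tau}\uH, \tau)$.

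Collecting the three cases and summing over $\tau_* \in \mathcal{T}_*$ (being mindful that each interior edge appears twice, once from each side), the unrefined part $\mathcal{T}\setminus \mathcal{R}_{\mathcal{T}\to\mathcal{T}_*}$ contributes identically on both meshes, while every edge touching an element of $\mathcal{R}_{\mathcal{T}\to\mathcal{T}_*}$ contributes at most $q$ of its coarse value. This gives
\begin{equation*}
\tilde{\eta}^2(\nabla_{w,\tau}\uH, \mathcal{T}_*) \leq \tilde{\eta}^2(\nabla_{w,\tau}\uH, \mathcal{T}) - (1-q)\,\tilde{\eta}^2(\nabla_{w,\tau}\uH, \mathcal{R}_{\mathcal{T}\to\mathcal{T}_*}),
\end{equation*}
which is \eqref{eq:etaestimate2} with $\lambda = 1-q \in (0,1)$, the same constant produced by Lemma \ref{lem:estimate-f}. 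The main delicacy I would treat carefully is the mixed case of a coarse edge $e$ with only one adjacent coarse element refined: on the refined side the weight drops from $h_\tau$ to $h_{\tau_*}$ while on the unrefined side it stays $h_\tau$, and one must verify via shape regularity that this still gives at least the factor $q$ reduction on $e$. The rest is routine bookkeeping.
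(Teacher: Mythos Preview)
Your argument is correct and is precisely the standard estimator-reduction proof that the paper explicitly omits (``we skip the proof of Lemmas \ref{lem:est-eta-1} and \ref{lem:est-eta-2}, since the corresponding techniques are quite standard and can be found, e.g.\ in \cite{CasconKreuzer08:2524}''). One small simplification: the ``mixed case'' you flag at the end dissolves if you compare element-by-element rather than edge-by-edge---an unrefined $\tau'\in\mathcal{T}\setminus\mathcal{R}_{\mathcal{T}\to\mathcal{T}_*}$ contributes identically to $\tilde\eta^2$ on both meshes (same edges, same $h_{\tau'}$, same coarse jump values), while each refined $\tau\in\mathcal{R}_{\mathcal{T}\to\mathcal{T}_*}$ contributes at most $q=2^{-1/d}$ times its coarse value (interior sub-edges have zero jump, boundary sub-edges carry the reduced $h_{\tau_*}$), so summing gives \eqref{eq:etaestimate2} directly with $\lambda=1-2^{-1/d}$ and no shape-regularity detour is needed.
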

Here we skip the proof of Lemmas \ref{lem:est-eta-1} and   \ref{lem:est-eta-2}, since the corresponding techniques are quite standard and can be found, e.g. in~\cite{CasconKreuzer08:2524}.

Hereafter the following short notations are adopted: on $\mathcal{T}_k$, $\nabla_{w, k}$ denotes the weak gradient, and $\nabla_k$ denotes the piecewise gradient $\nabla_h$, for quantities involving two levels of meshes, the subscript follows that of the coarse one.
\begin{eqnarray*}
& \varepsilon_{k}=\|A^{1/2}(\nabla u-\nabla_{w, k}u_{\mathcal{T}_k})\|^2;
\\
& E_k=\|A^{1/2}(\nabla_{w, k}u_{\mathcal{T}_k}-\nabla_{w, k+1}u_{\mathcal{T}_{k+1}})\|^2;
\\
& R_k=\mathcal{R}_{\mathcal{T}_k\rightarrow\mathcal{T}_{k+1}};
\\
& \eta_k=\eta(\nabla_{w, k} u_{\mathcal{T}_{k}}, \mathcal{T}_{k});\quad 
\eta_{R_k}=\eta(\nabla_{w, k} u_{\mathcal{T}_{k}}, \mathcal{R}_{\mathcal{T}_k\rightarrow\mathcal{T}_{k+1}});
\\
&\tilde{\eta}_k=\tilde{\eta}(\nabla_{w, k} u_{\mathcal{T}_{k}}, \mathcal{T}_{k});\quad
\tilde{\eta}_{R_k}=\tilde{\eta}(\nabla_{w, k} u_{\mathcal{T}_{k}}, 
\mathcal{R}_{\mathcal{T}_k\rightarrow\mathcal{T}_{k+1}});
\\
&F_k=F(f, \mathcal{T}_k);\quad
F_{R_k}=F(f, \mathcal{R}_{\mathcal{T}_k\rightarrow\mathcal{T}_{k+1}}).
\end{eqnarray*}

The following lemma summarizes the contraction of $\tilde{\eta}^2(\cdot, 
\cdot)$ by using Lemma \ref{lem:est-eta-1} and Lemma \ref{lem:est-eta-2}. 
\begin{lemma}
\label{lem:eta1} 
For the two consecutive triangulation $\mathcal{T}_{k}\leq \mathcal{T}_{k+1}$ in the 
AmWG cycle \eqref{alg:amwg}, and any $\zeta\in(0, 1)$, there exists a constant $\rho>0$ depending on the shape regularity $\mathcal{T}_{k+1}$  
 such that
\begin{equation}
\label{eq:est-eta1}
\tilde{\eta}_{k+1}^2
\leqslant
(1+\zeta)(\tilde{\eta}_k^2-\lambda\tilde{\eta}_{R_k}^2) + E_k/\rho.
\end{equation}
\end{lemma}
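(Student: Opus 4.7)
The plan is to chain Lemma \ref{lem:est-eta-1} and Lemma \ref{lem:est-eta-2}, both applied to the nested pair $\mathcal{T}_k \leq \mathcal{T}_{k+1}$ produced by one AmWG cycle. First I apply Lemma \ref{lem:est-eta-1} with $v = u_{\mathcal{T}_k} \in V(\mathcal{T}_k)$ and $v_{*} = u_{\mathcal{T}_{k+1}} \in V(\mathcal{T}_{k+1})$. The left-hand side equals $\tilde{\eta}^2(\nabla_{w,k+1} u_{\mathcal{T}_{k+1}}, \mathcal{T}_{k+1}) = \tilde{\eta}_{k+1}^2$, and the perturbation term is exactly $C_E(1+\zeta^{-1})\,E_k$ by the very definition of $E_k$. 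This yields
\[
\tilde{\eta}_{k+1}^2 \;\leq\; (1+\zeta)\,\tilde{\eta}^2(\nabla_{w,k} u_{\mathcal{T}_k}, \mathcal{T}_{k+1}) \;+\; C_E(1+\zeta^{-1})\,E_k.
\]

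Next I estimate the first term on the right by Lemma \ref{lem:est-eta-2} with $u_{\mathcal{T}} = u_{\mathcal{T}_k}$, which gives
\[
\tilde{\eta}^2(\nabla_{w,k} u_{\mathcal{T}_k}, \mathcal{T}_{k+1}) \;\leq\; \tilde{\eta}_k^2 \;-\; \lambda\,\tilde{\eta}_{R_k}^2.
\]
Combining the two displayed inequalities and setting $\rho := \bigl[C_E(1+\zeta^{-1})\bigr]^{-1}$ produces the claimed bound \eqref{eq:est-eta1}. The constant $\rho$ inherits the dependence of $C_E$ on the shape regularity of $\mathcal{T}_{k+1}$ and additionally depends on $\zeta$ through the factor $(1+\zeta^{-1})$.

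The only point that requires care is interpreting $\tilde{\eta}^2(\nabla_{w,k} u_{\mathcal{T}_k}, \mathcal{T}_{k+1})$ correctly: in line with Remark \ref{remark:eta-gradw}, here $\nabla_{w,k} u_{\mathcal{T}_k}$ on a child element $\tau_{*}\subset \tau$ must be read as the restriction $(\nabla_{w,\tau} u_{\mathcal{T}_k})|_{\tau_{*}}$, not as the fine weak gradient of the coarse function. This matches precisely the object $\nabla_{w,\tau} v_{\mathcal{T}}$ appearing on the right of \eqref{eq:etaestimate1} and the argument of the estimator on the refined mesh in Lemma \ref{lem:est-eta-2}, so the two lemmas compose consistently. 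Once this bookkeeping is fixed, the proof is a direct two-step application of the preceding reduction lemmas; I do not expect any genuine obstacle beyond this notational alignment.
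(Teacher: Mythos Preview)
Your proof is correct and matches the paper's own argument essentially verbatim: apply Lemma~\ref{lem:est-eta-1} with $\mathcal{T}=\mathcal{T}_k$, $\mathcal{T}_*=\mathcal{T}_{k+1}$, then Lemma~\ref{lem:est-eta-2}, and set $\rho^{-1}=C_E(1+\zeta^{-1})$. Your additional remark on interpreting $\nabla_{w,k} u_{\mathcal{T}_k}$ on the refined mesh is consistent with Remark~\ref{remark:eta-gradw} and simply makes explicit what the paper leaves implicit.
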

\begin{proof}
Let $\mathcal{T} = \mathcal{T}_{k}$ and $\mathcal{T}_{*} = \mathcal{T}_{k+1}$ in 
Lemmas \ref{lem:est-eta-1} and  \ref{lem:est-eta-2}, respectively. Then the desired 
result \eqref{eq:est-eta1} follows from letting $\rho^{-1}= C_{E}(1+\zeta^{-1})$.
\end{proof}

For the contraction of $F(\cdot, \cdot)$, there is no extra $\zeta$ factor as an artifact of Young's inequality, and this will play a key role in proving the convergence without any penalty parameter on 
the stabilization.
\begin{lemma}\label{lem:estimate-f}
Let $\mathcal{T}_{k+1}$ be the refinement of $\mathcal{T}_{k}$ produced in Algorithm \ref{alg:amwg}. There exists a constant $\lambda\in(0, 1)$ satisfying
\begin{equation}\label{eq:estimate-f}
F_{k+1}\leqslant F_k - \lambda F_{R_k}.
\end{equation}
\end{lemma}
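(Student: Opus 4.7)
The plan is to exploit three simple facts: (i) every element refined during one AmWG pass shrinks in diameter by a fixed factor $c<1$ that depends only on the refinement rule (bisection or red-green) and the shape regularity of $\mathcal T_0$; (ii) the coefficient $A$ is piecewise constant with respect to $\mathcal T_0$, so all descendants of a coarse element share the same value of $A$; and (iii) the $L^2(\Omega)$-norm of $f$ is additive over any partition of an element into its children.

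First I would split $F_{k+1}$ according to whether a fine-mesh element comes from an element of $\mathcal T_k$ that was refined or not:
\begin{equation*}
F_{k+1} \;=\; \sum_{\tau\in \mathcal T_k\setminus R_k} F(f,\tau)
 \;+\; \sum_{\tau\in R_k} \sum_{\tau'\in\mathcal T_{k+1},\,\tau'\subset\tau} F(f,\tau').
\end{equation*}
For the unrefined elements, $F(f,\tau)$ is unchanged. For a refined $\tau\in R_k$ with children $\{\tau'\}$ produced by bisection or red-green refinement, the shape regularity of $\mathcal T_{k+1}$ (inherited from $\mathcal T_0$) yields $h_{\tau'}\leq c\,h_\tau$ with a fixed $c\in(0,1)$ independent of $k$. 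Since $\tau\in\mathcal T_0$ or $\tau$ is a descendant of some element of $\mathcal T_0$, the piecewise-constant structure of $A$ gives $A_{\tau'}=A_\tau$.

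Combining these two observations with the partitioning identity $\sum_{\tau'\subset\tau}\|f\|_{0,\tau'}^2=\|f\|_{0,\tau}^2$, I get
\begin{equation*}
\sum_{\tau'\subset\tau} F(f,\tau')
\;=\; \sum_{\tau'\subset\tau} h_{\tau'}^2 A_{\tau'}^{-1}\|f\|_{0,\tau'}^2
\;\leq\; c^{2}\, h_\tau^2 A_\tau^{-1}\sum_{\tau'\subset\tau}\|f\|_{0,\tau'}^2
\;=\; c^{2}\, F(f,\tau).
\end{equation*}
Summing over $\tau\in R_k$ and recombining with the unrefined part,
\begin{equation*}
F_{k+1} \;\leq\; \sum_{\tau\in\mathcal T_k\setminus R_k} F(f,\tau) + c^{2} F_{R_k}
 \;=\; F_k - (1-c^{2})\, F_{R_k}.
\end{equation*}
Setting $\lambda:=1-c^{2}\in(0,1)$ gives the claim.

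There is no real obstacle here; the only subtlety is to remember that $R_k$ contains \emph{every} element of $\mathcal T_k$ that is subdivided, including neighbors of $\mathcal M_k$ refined to maintain conformity, but the bound $h_{\tau'}\le c\,h_\tau$ applies uniformly to all of them, so the argument is unaffected. The constant $c$ (and hence $\lambda$) is purely geometric: for newest-vertex bisection in $2$D one may take $c^{2}=1/2$, for $3$D bisection a correspondingly larger $c^{2}<1$, and for red-green refinement $c^{2}=1/4$.
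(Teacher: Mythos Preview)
Your proof is correct and follows essentially the same route as the paper's: split $F_{k+1}$ into contributions from unrefined and refined elements, use that $A$ is inherited by children, that $\|f\|_{0,\cdot}^2$ is additive under subdivision, and that each refined element's diameter shrinks by a fixed factor. The paper carries this out for a single bisection step (writing $h_{\tau'}^d=\tfrac12 h_\tau^d$ and obtaining $\lambda=1-2^{-1/d}$), while you phrase it with a generic contraction factor $c$ that also covers red--green refinement; the structure of the argument is identical.
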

\begin{proof}
For any $\tau\in \mathcal{T}\backslash\mathcal{T}_*$, we only 
need to consider the case where $\tau$ is subdivided into 
$\tau_*^1, \tau_*^2\in \mathcal{T}_*$ with $h_{\tau_*^1}^{d}
=h_{\tau_*^{2}}^{d}=\frac{1}{2}h_{\tau_*}^{d} (d=2, 3)$, we have
\begin{eqnarray*}
F_{k+1}&=&\sum_{\tau \in \mathcal{T}_{k+1}} h_{k+1}^2A_{k+1}^{-1}\|f\|_{0, \tau}^2
\\
& =& \sum_{\tau \in \mathcal{T}_{k+1}\cap\mathcal{T}_{k}} h_{k+1}^2A_{k+1}^{-1}\|f\|_{0, \tau}^2
+ \sum_{\tau \in \mathcal{T}_{k+1}\backslash\mathcal{T}_k} h_{k+1}^2A_{k+1}^{-1}\|f\|_{0, \tau}^2\\
&\leqslant& \sum_{\tau \in \mathcal{T}_k} h_{k}^2A_{k}^{-1}\|f\|_{0, \tau}^2 
-\sum_{\tau \in \mathcal{T}_k\backslash \mathcal{T}_{k+1}} h_{k}^2A_{k}^{-1}\|f\|_{0, \tau}^2\\
&&\quad +\sum_{\tau \in \mathcal{T}_k\backslash \mathcal{T}_{k+1}} 2^{-1/d} h_{k}^2A_{k}^{-1}\|f\|_{0, \tau}^2\\
&=& F_k - \lambda F_{R_k},
\end{eqnarray*}
where $\lambda:=1-2^{-1/d}\in (0, 1)$.
\end{proof} 

The following lemma summarizes the contraction of error estimator $\eta^2(\cdot, 
\cdot)$ by using Lemmas \ref{lem:eta1} and  \ref{lem:estimate-f}. 

\begin{lemma}\label{lem:eta}
There exist constants $\zeta\in(0, 1)$ and $\rho > 0$ such that
\begin{equation}
\label{eq:est-eta}
\eta^2_{k+1}
\leqslant 
(1+\zeta)(1-\theta\lambda) \eta^2_{k}-\zeta F_k + \zeta\lambda F_{R_k}
+E_k/\rho,
\end{equation}
where the parameters $\theta$ and $\lambda$ are given in the marking strategy \eqref{eq:mark}, Lemmas \ref{lem:estimate-f} and   \ref{lem:eta1}, respectively.
\end{lemma}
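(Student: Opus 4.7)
The plan is to decompose $\eta_{k+1}^2$ via \eqref{eq:eta2} into $F_{k+1} + \tilde\eta_{k+1}^2$, bound each summand by the two contraction results already proved (Lemmas \ref{lem:estimate-f} and \ref{lem:eta1}), and then convert the resulting ``refined set'' estimator $\eta_{R_k}^2$ into a fraction of the total estimator $\eta_k^2$ via the D\"orfler marking criterion \eqref{eq:mark}.

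Concretely, the first step is to add the two bounds
\[
F_{k+1} \leq F_k - \lambda F_{R_k}
\quad\text{and}\quad
\tilde\eta_{k+1}^2 \leq (1+\zeta)\tilde\eta_k^2 - (1+\zeta)\lambda \tilde\eta_{R_k}^2 + E_k/\rho,
\]
which together yield
\[
\eta_{k+1}^2 \leq F_k + (1+\zeta)\tilde\eta_k^2 - \lambda F_{R_k} - (1+\zeta)\lambda\tilde\eta_{R_k}^2 + E_k/\rho.
\]
The second step is the algebraic trick of rewriting $-\lambda F_{R_k} = -(1+\zeta)\lambda F_{R_k} + \zeta\lambda F_{R_k}$ so that the two ``refined set'' contributions fuse into $-(1+\zeta)\lambda\,\eta_{R_k}^2$ plus an extra $+\zeta\lambda F_{R_k}$ term. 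Since the REFINE step ensures $\mathcal{M}_k \subseteq \mathcal{R}_k$, the marking criterion \eqref{eq:mark} gives $\eta_{R_k}^2 \geq \eta^2(\nabla_w u_k,\mathcal{M}_k) \geq \theta\,\eta_k^2$, whence
\[
-(1+\zeta)\lambda\,\eta_{R_k}^2 \leq -(1+\zeta)\theta\lambda\,\eta_k^2.
\]
Finally, rewriting $F_k + (1+\zeta)\tilde\eta_k^2 = (1+\zeta)\eta_k^2 - \zeta F_k$ via \eqref{eq:eta2} and substituting into the previous inequality collapses everything to the desired
\[
\eta_{k+1}^2 \leq (1+\zeta)(1-\theta\lambda)\eta_k^2 - \zeta F_k + \zeta\lambda F_{R_k} + E_k/\rho.
\]

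There is no genuine obstacle in this proof: it is essentially bookkeeping once Lemmas \ref{lem:est-eta-1}--\ref{lem:estimate-f} are in place. The only subtlety worth flagging is conceptual, not technical: the data-oscillation--type term $F_k$ contracts in Lemma \ref{lem:estimate-f} \emph{without} an accompanying Young's factor $(1+\zeta)$, which is precisely what makes the slack term $-\zeta F_k$ appear on the right-hand side. This slack is the ingredient that later lets the convergence argument avoid introducing any penalty parameter on the stabilization — and is the reason for carrying $F$ and $\tilde\eta$ separately through the analysis rather than bounding $\eta$ in one shot.
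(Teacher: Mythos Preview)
Your proof is correct and follows essentially the same route as the paper: decompose $\eta_{k+1}^2 = \tilde\eta_{k+1}^2 + F_{k+1}$, apply Lemmas \ref{lem:eta1} and \ref{lem:estimate-f}, reassemble using $\tilde\eta^2 + F = \eta^2$ to pull out the slack $-\zeta(F_k - \lambda F_{R_k})$, and finish with the D\"orfler marking bound $\eta_{R_k}^2 \geq \theta\eta_k^2$. Your explicit observation that the $F$-contraction carries no $(1+\zeta)$ factor, and that this is what produces the $-\zeta F_k$ slack later exploited in Theorem \ref{thm:convergence}, is exactly the point.
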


\begin{proof}
Making use of the simplified notation of $\eta(\cdot, \cdot)$ in \eqref{eq:eta2}, Lemmas \ref{lem:estimate-f} and  \ref{lem:eta1}, we have
\begin{eqnarray}\nonumber
\eta^2_{k+1}
&=&
\tilde{\eta}^2_{k+1}+ F_{k+1}
\\ \nonumber
&\leqslant& (1+\zeta)\left(\tilde{\eta}^2_k -\lambda\tilde{\eta}^2_{R_k}\right)
+  F_k-F_{R_k}+ E_k/\rho
\\ \label{eq:eta1} 
&\leqslant &(1+\zeta)\left(\eta^2_k -\lambda\eta^2_{R_k}\right)
- \zeta( F_k-\lambda F_{R_k})+ E_k/\rho.
\end{eqnarray}

By \eqref{eq:mark} and $\mathcal{T}_{k+1}$ being refined at least 
once from $\mathcal{T}_k$, we 
have $\eta^2_{R_k}\geqslant \theta\eta^2_k$. In conjunction with \eqref{eq:eta1}, we obtain
\begin{eqnarray*}
\eta^2_{k+1}
\leqslant 
(1+\zeta)(1-\theta\lambda)\eta^2_{k} - \zeta F_k + \zeta\lambda F_{R_k}+ E_k/\rho,
\end{eqnarray*}
which completes the proof.
\end{proof}

\subsection{Quasi-orthogonality}
In this section,  we will show the contraction property of the energy error by using similar arguments in~\cite{Hu;Xu;2013}. 

First, a canonical interpolation operator $I_{\mathcal{T}}^{^{\rm CR}}$ is defined any $v\in H_0^1(\Omega)$: $I_{\mathcal{T}}^{^{\rm CR}} v\in V^{\mathrm{nc}}(\mathcal{T})$ satisfies 
\begin{equation}\label{eq:interpolation-pi}
\int_e I_{\mathcal{T}}^{^{\rm CR}} v =\int_e v, \quad \forall e\in \mathcal{E},
\end{equation}
and the interpolation admits the following estimate:
\begin{equation}\label{eq:I-interpolation1}
\big\|v-I_{\mathcal{T}}^{^{\rm CR}} v\big\|_{\tau} \lesssim h_\tau\|\nabla v\|_{\tau}, \quad \forall \tau\in \mathcal{T},
\end{equation}
where the constant depends only on the shape regularity of $\tau$.
\begin{lemma}\label{lem:I-projection1}
Assume that $\bm{\sigma}_{\mathcal{T}}$ is a constant vector on each element $\tau\in \mathcal{T}$, we have
\begin{equation}
(\nabla_h v, \bm{\sigma}_{\mathcal{T}})_{\mathcal{T}} =(\nabla_h I_{\mathcal{T}}^{^{\rm CR}} v, \bm{\sigma}_{\mathcal{T}})_{\mathcal{T}}.
\end{equation}
\end{lemma}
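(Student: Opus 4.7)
The plan is to prove the identity elementwise and then sum over the triangulation. Fix $\tau\in\mathcal{T}$. Since $\bm{\sigma}_{\mathcal{T}}|_\tau$ is a constant vector, it is divergence-free on $\tau$, so integration by parts on $\tau$ eliminates the interior term and leaves only a boundary contribution:
\begin{equation*}
(\nabla v, \bm{\sigma}_{\mathcal{T}})_{\tau}
= -(v, \nabla\cdot \bm{\sigma}_{\mathcal{T}})_{\tau} + \langle v, \bm{\sigma}_{\mathcal{T}}\cdot \bm{n}\rangle_{\partial\tau}
= \langle v, \bm{\sigma}_{\mathcal{T}}\cdot \bm{n}\rangle_{\partial\tau}.
\end{equation*}
Next I would use that on each edge/face $e\subset\partial\tau$ the quantity $\bm{\sigma}_{\mathcal{T}}\cdot \bm{n}$ is a single constant, call it $c_{\tau,e}$, so
\begin{equation*}
\langle v, \bm{\sigma}_{\mathcal{T}}\cdot \bm{n}\rangle_{\partial\tau}
= \sum_{e\subset\partial\tau} c_{\tau,e}\int_e v\,\mathrm{d}s.
\end{equation*}

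The key observation is then the defining property \eqref{eq:interpolation-pi} of the canonical Crouzeix--Raviart interpolant, namely $\int_e v\,\mathrm{d}s = \int_e I_{\mathcal{T}}^{^{\rm CR}} v\,\mathrm{d}s$ for every $e\in\mathcal{E}$. Substituting this in and running the same integration by parts backwards, using that $I_{\mathcal{T}}^{^{\rm CR}} v$ is piecewise polynomial and therefore $H^1(\tau)$-smooth on each element, yields
\begin{equation*}
(\nabla v, \bm{\sigma}_{\mathcal{T}})_{\tau}
= \sum_{e\subset\partial\tau} c_{\tau,e}\int_e I_{\mathcal{T}}^{^{\rm CR}} v\,\mathrm{d}s
= \langle I_{\mathcal{T}}^{^{\rm CR}} v, \bm{\sigma}_{\mathcal{T}}\cdot \bm{n}\rangle_{\partial\tau}
= (\nabla I_{\mathcal{T}}^{^{\rm CR}} v, \bm{\sigma}_{\mathcal{T}})_{\tau}.
\end{equation*}
Finally I would sum this elementwise equality over all $\tau\in\mathcal{T}$ to obtain the claimed identity.

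There is no real obstacle here: the statement reduces to two applications of integration by parts glued together by the defining average-preservation property of $I_{\mathcal{T}}^{^{\rm CR}}$. The only point that requires a sentence of care is that the boundary integrals must be evaluated face-by-face rather than via jump/average notation, because $\bm{\sigma}_{\mathcal{T}}\cdot \bm{n}$ need not be single-valued across interior faces; keeping the argument strictly local to each $\tau$ sidesteps any cancellation considerations and makes the polynomial degree $\ell-1=0$ of $\bm{\sigma}_{\mathcal{T}}\cdot \bm{n}|_e$ match the $P_0(e)$ test space implicit in \eqref{eq:interpolation-pi}.
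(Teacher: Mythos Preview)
Your proof is correct and follows exactly the approach indicated in the paper: elementwise integration by parts using $\nabla\cdot(\bm{\sigma}_{\mathcal{T}}|_\tau)=0$, combined with the edge-average preservation property \eqref{eq:interpolation-pi} of $I_{\mathcal{T}}^{^{\rm CR}}$. The paper's own proof is a one-line sketch of precisely this argument; your version simply spells out the face-by-face computation in more detail.
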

\begin{proof}
Note that $\bm{\sigma}_{\mathcal{T}}$ is a constant vector on each $\tau\in \mathcal{T}$, $\nabla\cdot (\bm{\sigma}_{\mathcal{T}}|_\tau)=0$. By applying integration by parts, we will get the desired result.
\end{proof}

For $v_0\in V^{\scriptscriptstyle{\rm DG}}(\mathcal{T})$, it is embedded into $V(\mathcal{T})$ by $(v_0, Q_b\{v_0\})$. Denote the interpolant $I_{\mathcal{T}}^{^{\rm CR}}$ for $V^{\mathrm{nc}}(\mathcal{T})$ satisfying 
\begin{equation}
\label{eq:I-interpolation2}
\int_e I_{\mathcal{T}}^{^{\rm CR}} v_0 =\int_e Q_b\{v_0\}, \quad \forall e\in \mathcal{E}.
\end{equation}
\begin{lemma}
\label{lem:I-projection2}
For any $v_0\in V^{\scriptscriptstyle{\rm DG}}(\mathcal{T})$ and $\vH=\{v_0, Q_b\{v_0\}\}\in V(\mathcal{T})$, the interpolation defined in \eqref{eq:I-interpolation2} satisfies
\begin{equation}\label{eq:I-projection2}
\nabla_{w, \tau} \vH=\nabla_h I_{\mathcal{T}}^{^{\rm CR}} v_0.
\end{equation}
\end{lemma}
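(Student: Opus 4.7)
The plan is to exploit the fact that for $\ell=1$ both sides of \eqref{eq:I-projection2} live in the same finite-dimensional space: namely, on each $\tau \in \mathcal{T}$, $\nabla_{w,\tau} v_{\mathcal{T}} \in (P_0(\tau))^d$ by Definition~\ref{Def:MWG}, and $\nabla_h I_{\mathcal{T}}^{^{\rm CR}} v_{\mathcal{T}} \in (P_0(\tau))^d$ since $I_{\mathcal{T}}^{^{\rm CR}} v_{\mathcal{T}}|_\tau$ is affine. It therefore suffices to test both quantities against an arbitrary constant vector $\bm{q} \in (P_0(\tau))^d$ and show the resulting scalars agree.

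First, I would plug $\bm{q}$ into Definition~\ref{Def:MWG}. Since $\nabla\cdot \bm{q} = 0$, the volume term drops out and
\[
(\nabla_{w,\tau} v_{\mathcal{T}}, \bm{q})_\tau
= \langle Q_b\{v_{\mathcal{T}}\},\bm{q}\cdot\boldsymbol{n}\rangle_{\partial\tau}
= \sum_{e\subset\partial\tau} (\bm{q}\cdot\boldsymbol{n}_e)\int_e Q_b\{v_{\mathcal{T}}\}\,\mathrm{d}s,
\]
where I used that $\bm{q}\cdot\boldsymbol{n}$ is constant on each face $e$ of a simplex to pull the scalar outside the integral.

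Second, I would do the analogous computation on the right-hand side. Integration by parts on $\tau$, again with $\nabla\cdot\bm{q}=0$, gives
\[
(\nabla_h I_{\mathcal{T}}^{^{\rm CR}} v_{\mathcal{T}}, \bm{q})_\tau
= \sum_{e\subset\partial\tau} (\bm{q}\cdot\boldsymbol{n}_e)\int_e I_{\mathcal{T}}^{^{\rm CR}} v_{\mathcal{T}}\,\mathrm{d}s.
\]
Now the defining property \eqref{eq:I-interpolation2} of the canonical Crouzeix--Raviart interpolant states precisely that $\int_e I_{\mathcal{T}}^{^{\rm CR}} v_{\mathcal{T}} = \int_e Q_b\{v_{\mathcal{T}}\}$ for every $e \in \mathcal{E}$, so the two edge sums agree term by term. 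Since this holds for every constant $\bm{q}$ on $\tau$, the two constant vectors $\nabla_{w,\tau} v_{\mathcal{T}}$ and $\nabla_h I_{\mathcal{T}}^{^{\rm CR}} v_{\mathcal{T}}$ coincide.

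There is no real obstacle here; the lemma is essentially a book-keeping identity that exploits (i) the vanishing of $\nabla\cdot\bm{q}$ for $\bm{q}$ constant (which is specific to the lowest-order case $\ell=1$) and (ii) the compatibility of the moment condition \eqref{eq:I-interpolation2} with the boundary term in the definition of $\nabla_w$. The only point worth flagging is that the identity is sharply tied to $\ell=1$: for higher $\ell$, one would need to test against non-constant $\bm{q}$, picking up interior contributions $-(I_{\mathcal{T}}^{^{\rm CR}} v_{\mathcal{T}}, \nabla\cdot\bm{q})_\tau$ versus $-(v_{\mathcal{T}}, \nabla\cdot\bm{q})_\tau$, which generally differ.
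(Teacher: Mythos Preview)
Your proof is correct and follows essentially the same route as the paper: test both sides against an arbitrary constant vector on $\tau$, use that the divergence of a constant vanishes to reduce the weak-gradient definition to a boundary term, invoke the moment condition \eqref{eq:I-interpolation2} on each face, and conclude by integration by parts that the two piecewise constants agree. Your added remark on why the argument is specific to $\ell=1$ is a useful observation.
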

\begin{proof}
 For any constant vector $\bm{\sigma}_{\mathcal{T}}$, using the definition \eqref{Def:MWG} leads to 
\begin{eqnarray*}
(\nabla_{w, \tau} \vH, \bm{\sigma}_{\mathcal{T}})_\tau
&=&(v_0, \nabla\cdot \bm{\sigma}_{\mathcal{T}})+\langle Q_b \{v_0\}, \bm{\sigma}_{\mathcal{T}}\cdot \bm{n}\rangle_{\partial \tau}
\\
&=&\langle I_{\mathcal{T}}^{^{\rm CR}} \vH, \bm{\sigma}_{\mathcal{T}}\cdot \bm{n}\rangle_{\partial \tau}
\\
&=&(\nabla_{h} I_{\mathcal{T}}^{^{\rm CR}} v_0, \bm{\sigma}_{\mathcal{T}})_\tau.
\end{eqnarray*}

As $\nabla_{w, \tau} \vH, \nabla_{h} I_{\mathcal{T}}^{^{\rm CR}} v_0$ are constant on each element $\tau\in \mathcal{T}$, we get \eqref{eq:I-projection2}.
\end{proof}

\begin{lemma}[Quasi-orthogonality]
\label{lem:quasi-orthogonality}
Let $u\in H_0^1(\Omega)$ is the weak solution of \eqref{eq:pb-model}. Let $\mathcal{T}_{k+1}$ be the refinement of  $\mathcal{T}_{k}$ produced in Algorithm \ref{alg:amwg},  
$u_k=\{u_0^k, u_b^k\}\in V_0(\mathcal{T}_k)$ and
$u_{k+1}=\{u_0^{k+1}, u_b^{k+1}\} \in V_0(\mathcal{T}_{k+1})$ be the solutions to mWG discretization 
\eqref{eq:pb-mwg}, respectively. For any positive constant $\delta\in(0, 1)$, we have
\begin{equation}
\label{eq:quasi-orthogonality}
(1-\delta)\varepsilon_{k+1} \leqslant \varepsilon_{k} -E_k+ ( 
C_1 F_{R_k}) /\delta,
\end{equation}
with constant $C_1$ depending on the shape regularity of $\mathcal{T}_{k+1}$.
\end{lemma}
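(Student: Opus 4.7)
The plan is to reduce the mWG quasi-orthogonality to the Crouzeix--Raviart (CR) quasi-orthogonality of Hu--Xu~\cite{Hu;Xu;2013}, via the identification provided by Lemmas~\ref{lem:I-projection1} and~\ref{lem:I-projection2}. First I would set $v_j := I_{\mathcal{T}_j}^{^{\rm CR}} u_j \in V^{\mathrm{nc}}_0(\mathcal{T}_j)$ for $j \in \{k, k+1\}$, so that $\nabla_{w,j} u_j = \nabla_h v_j$ by Lemma~\ref{lem:I-projection2}. For any CR test $\phi \in V^{\mathrm{nc}}_0(\mathcal{T}_j)$ embedded into $V_0(\mathcal{T}_j)$ as $(\phi, Q_b\{\phi\})$, the zero-mean property of CR (combined with $\ell = 1$) forces $Q_b\jump{\phi}{} = 0$, so the mWG stabilization vanishes, and together with $\nabla_{w,j}\phi = \nabla_h\phi$ the equation \eqref{eq:pb-mwg} specialised to CR tests yields the classical CR Galerkin relation $(A\nabla_h v_j, \nabla_h\phi)_{\mathcal{T}_j} = (f,\phi)$ for all $\phi \in V^{\mathrm{nc}}_0(\mathcal{T}_j)$. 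Consequently $\varepsilon_j = \|A^{1/2}(\nabla u - \nabla_h v_j)\|^2$ and $E_k = \|A^{1/2}\nabla_h(v_{k+1}-v_k)\|^2$ are honest CR error and Galerkin difference.

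Next I would use the Pythagorean identity
\[
\varepsilon_k = \varepsilon_{k+1} + E_k + 2X, \qquad
X := \bigl(A(\nabla u - \nabla_h v_{k+1}),\, \nabla_h(v_{k+1} - v_k)\bigr)_{\mathcal{T}_{k+1}},
\]
so that \eqref{eq:quasi-orthogonality} reduces, through Young's inequality $2|X| \leq \delta\varepsilon_{k+1} + C_1 F_{R_k}/\delta$, to the single bound $|X|^2 \leq C_1\,\varepsilon_{k+1}\,F_{R_k}$.

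To establish this bound I would adopt Hu--Xu's prolongation strategy: construct $\tilde v_k \in V^{\mathrm{nc}}_0(\mathcal{T}_{k+1})$ with new CR degrees of freedom assigned on fine edges near the refined region, so that $\tilde v_k - v_k$ is supported on $\omega(R_k)$. Splitting $\nabla_h(v_{k+1} - v_k) = \nabla_h(v_{k+1} - \tilde v_k) + \nabla_h(\tilde v_k - v_k)$ gives $X = X_1 + X_2$. Since $v_{k+1} - \tilde v_k \in V^{\mathrm{nc}}_0(\mathcal{T}_{k+1})$, Lemma~\ref{lem:I-projection1} applied with the piecewise-constant field $A\nabla_h(v_{k+1} - \tilde v_k)$ together with the fine-mesh CR Galerkin equation collapses $X_1$ into the CR consistency functional $\sum_{e\in\mathcal{E}_{k+1}} \int_e A\nabla u \cdot \bm{n}_e\, \jump{v_{k+1} - \tilde v_k}{}$; subtracting an edge-wise constant (allowed by the zero edge-mean of CR jumps) and applying a trace/inverse estimate (with $\nabla\cdot(A\nabla u)=-f$ used to absorb flux oscillations) yields $|X_1| \lesssim \sqrt{\varepsilon_{k+1}}\sqrt{F_{R_k}}$. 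For $X_2$, Cauchy--Schwarz directly gives $|X_2| \leq \sqrt{\varepsilon_{k+1}}\,\|A^{1/2}\nabla_h(\tilde v_k - v_k)\|_{\mathcal{T}_{k+1}}$.

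The hard part is the prolongation estimate $\|A^{1/2}\nabla_h(\tilde v_k - v_k)\|^2_{\mathcal{T}_{k+1}} \lesssim F_{R_k}$, which must convert the piecewise $H^1$-defect of the prolongation into the $L^2$-data residual on the refined region. I expect to import this step essentially verbatim from Hu--Xu~\cite{Hu;Xu;2013}: the prolongation defect on each $\tau\in R_k$ is arranged to sit in a local subspace whose discrete norm is controlled through the coarse CR Galerkin equation's element residual, giving an elementwise bound of order $h_\tau A_\tau^{-1/2}\|f\|_\tau$ that sums to $F_{R_k}$. The constant $C_1$ thus depends only on shape regularity and on the ratio of $A$ across adjacent elements.
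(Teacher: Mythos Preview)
Your reduction to the Crouzeix--Raviart setting via Lemmas~\ref{lem:I-projection1} and~\ref{lem:I-projection2} is correct, and the Pythagorean splitting leading to the cross term $X$ is fine. The gap is in your treatment of $X$: you invoke the Hu--Xu prolongation $\tilde v_k$ and claim the estimate $\|A^{1/2}\nabla_h(\tilde v_k - v_k)\|_{\mathcal{T}_{k+1}}^2 \lesssim F_{R_k}$. This is not what the prolongation gives. Lemma~\ref{lem:j-projection} (which \emph{is} the Hu--Xu estimate you cite) bounds this quantity by the tangential jumps $\sum_{\tau\in R_k}\sum_{e\subset\partial\tau} h_\tau\|J_{t,e}(\nabla_k v_k)\|_e^2$, i.e.\ by a piece of the estimator $\eta_{R_k}$, not by the data term $F_{R_k}$. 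There is no mechanism to convert tangential jumps of $\nabla_k v_k$ into $h_\tau^2 A_\tau^{-1}\|f\|_\tau^2$; the element residual argument you sketch does not do this. In Hu--Xu the prolongation is used for discrete reliability (where $\eta_{R_k}$ is the target), not for quasi-orthogonality (where $F_{R_k}$ is needed). Your $X_1$ bound is also shaky: the CR consistency functional $\sum_e\int_e A\nabla u\cdot\bm{n}_e\,\jump{\phi}$ after subtracting edge constants leaves an oscillation of the exact flux, which is not $F_{R_k}$ without further assumptions on $u$.

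The paper (following Hu--Xu's own quasi-orthogonality argument) bypasses prolongation altogether. It tests the cross term with the CR function $w := I_{k+1}^{^{\rm CR}}(u - u_{k+1})\in V^{\rm nc}(\mathcal{T}_{k+1})$: since $Q_b\jump{w}=0$ on fine edges, the fine mWG equation gives $(A\nabla_{w,k+1}u_{k+1},\nabla_{k+1}w)=(f,w)$; since $A\nabla_{w,k}u_k$ is piecewise constant on $\mathcal{T}_k$, Lemma~\ref{lem:I-projection1} and the coarse mWG equation give $(A\nabla_{w,k}u_k,\nabla_{k+1}w)=(f,I_k^{^{\rm CR}}w)$. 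Thus $X=(f,(I-I_k^{^{\rm CR}})w)$, which vanishes on unrefined elements and is bounded on $R_k$ directly by $F_{R_k}^{1/2}\|\nabla_{k+1}w\|\lesssim F_{R_k}^{1/2}\varepsilon_{k+1}^{1/2}$ via the CR interpolation estimate~\eqref{eq:I-interpolation1}. No prolongation, no consistency-error detour.
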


\begin{proof}
Let $v=u-u_0^k$. Notice that $\nabla_{w,k+1}u_{k+1}-\nabla_{w,k}u_k$ is a piecewise constant on $\tau \in \mathcal{T}_{k+1}$, we have 
\begin{eqnarray*}
\lefteqn{(A(\nabla u-\nabla_{w,k+1}u_{k+1}), \nabla_{w,k+1}u_{k+1}-\nabla_{w,k}u_k)_{\mathcal{T}_{k+1}}}\\
&\ \ \ \ =(A\nabla_{k+1} I_{k+1}^{^{\rm CR}} v, \nabla_{w,k+1}u_{k+1}-\nabla_{w,k}u_k)_{\mathcal{T}_{k+1}}.
\end{eqnarray*}
As $Q_b\jumpat{I_{k+1}^{^{\rm CR}}v}{e_{k+1}} =0$, $\forall e_{k+1}\in \mathcal{E}_{k+1}$, we get 
\begin{eqnarray*}
(A\nabla_{w,k+1}u_{k+1}, \nabla_{h} I_{k+1}^{^{\rm CR}} v)_{\mathcal{T}_{k+1}}=(f, \nabla_{h} I_{k+1}^{^{\rm CR}} v),
\end{eqnarray*} 
and 
\begin{eqnarray*} 
(A\nabla_{w,k}u_k, \nabla_{h} I_{k+1}^{^{\rm CR}} v)_{\mathcal{T}_{k+1}}
&=&(A\nabla_{w,k}u_k, \nabla_{k}  I_{k}^{^{\rm CR}} ( I_{k+1}^{^{\rm CR}} v))_{\mathcal{T}_{k+1}}\\
&=&(f, \nabla_k  I_{k}^{^{\rm CR}} ( I_{k+1}^{^{\rm CR}} v))_{\mathcal{T}_{k+1}}.
\end{eqnarray*}

For any $\tau\in \mathcal{T}_k\cap\mathcal{T}_{k+1}$, we have $ I_{k}^{^{\rm CR}}v|_\tau= I_{k+1}^{^{\rm CR}}v|_\tau$, then 
\begin{eqnarray*}
(A\nabla_{k+1} I_{k+1}^{^{\rm CR}} v, \nabla_{w,k+1}u_{k+1}-\nabla_{w,k}u_k)_{\mathcal{T}_{k+1}}=0.
\end{eqnarray*}
Applying Lemma \ref{lem:I-projection1}, Lemma \ref{lem:I-projection2}, the triangle inequality, Lemma 
\ref{lem:est-diff-weak}, and Lemma \ref{lem:est-jump} leads to 
\begin{eqnarray}\nonumber
\lefteqn{(A\nabla_{k+1} I_{k+1}^{^{\rm CR}} v, \nabla_{w,k+1}u_{k+1}-\nabla_{w,k}u_k)_{\mathcal{T}_{k+1}}}
\\ \nonumber
&&=(f, (I- I_{k}^{^{\rm CR}}) I_{k+1}^{^{\rm CR}} v)_{\mathcal{T}_{k+1}}
\\ \nonumber
&&\leqslant \sum_{\tau \in \mathcal{T}_{k+1}\backslash \mathcal{T}_k}h_{k+1}\|f\|_{\tau_{k+1}} \cdot h_{k+1}^{-1}\|(I- I_{k}^{^{\rm CR}}) I_{k+1}^{^{\rm CR}} v\|_{\tau_{k+1}}
\\ \nonumber
&&\lesssim\sum_{\tau \in \mathcal{T}_k\backslash 
\mathcal{T}_{k+1}}h_{k}\|f\|_{\tau_k} \cdot \|\nabla_{k+1} I_{k+1}^{^{\rm CR}} 
v\|_{\mathcal{T}_{k+1}}
\\ \label{eq:quasi-orthogonality1}
&&\leqslant \sqrt{C_1} F_{R_k}^{1/2}\cdot \varepsilon_{k+1}.
\end{eqnarray}

It follows the Young's inequality and \eqref{eq:quasi-orthogonality1} that
\begin{eqnarray*}
\varepsilon_{k+1}
&=&\varepsilon_{k}-E_k-2(A(\nabla u-\nabla_{w,k+1}u_{k+1}), \nabla_{w,k+1}u_{k+1}-\nabla_{w,k}u_k)_{\mathcal{T}_{k+1}}\\
&\leqslant &\varepsilon_{k}-E_k+{\delta}\varepsilon_{k+1} +{\delta}\eta_{k+1} + ( 
C_1F_{R_k}) /\delta.
\end{eqnarray*}
\end{proof}

\subsection{Convergence of AmWG}
In the following theorem, the convergence of Algorithm \ref{alg:amwg} is proved. The main idea is to use the negative term on the right to cancel the positive terms, and to use the reduction factor $\lambda\in(0, 1)$ in \eqref{eq:estimate-f} and \eqref{eq:est-eta}. 

\begin{theorem}\label{thm:convergence}
Given a marking parameter $\theta\in(0, 1)$ and an initial mesh $\mathcal{T}_0$. Let $u$ be the solution of \eqref{eq:pb-model}, $\{\mathcal{T}_k, u_k, \eta(\nabla_w u_k,\mathcal{T}_k)\}_{k\geq 0}$ be a sequence of meshes, finite element approximations and error estimators produced by Algorithm \ref{alg:amwg} with $\ell=1$, then there exist constants $\sigma\in (0,1)$, $\rho>0$, $C_2>0$ depending only on the shape regularity of $\mathcal{T}_0$, the marking parameter $\theta$, and $\delta$, such that if
\begin{eqnarray*}
0<\delta<\min\left(\dfrac{\rho (1-(1+\zeta)(1-\theta\lambda))}{C_U}, 1\right),
\end{eqnarray*}
then
\begin{eqnarray*}
(1 - \delta)\varepsilon_{k+1} + \rho\eta^2_{k+1} + C_2F_{k+1}
\leqslant\sigma\left((1 - \delta)\varepsilon_{k} + \rho\eta^2_k + C_2F_k\right),
\end{eqnarray*}
where the constant $C_U$ is given by Theorem \ref{thm:est-reliability}. 
\end{theorem}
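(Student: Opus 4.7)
The plan is to combine the three reduction-type estimates from Lemma~\ref{lem:quasi-orthogonality}, Lemma~\ref{lem:eta}, and Lemma~\ref{lem:estimate-f} with carefully chosen positive weights so that every unfavorable cross term either cancels or is absorbed, yielding a single contraction. Concretely, I would add one copy of the quasi-orthogonality, $\rho$ copies of the estimator reduction, and $C_{2}$ copies of the $F$-reduction. The resulting inequality has left-hand side exactly $(1-\delta)\varepsilon_{k+1}+\rho\eta_{k+1}^{2}+C_{2}F_{k+1}$.

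Next I would verify the right-hand side term by term. The $E_{k}$ contributions cancel exactly: the quasi-orthogonality contributes $-E_{k}$ while Lemma~\ref{lem:eta}, scaled by $\rho$, contributes $+E_{k}$. The coefficient of $F_{R_{k}}$ is $C_{1}/\delta+\rho\zeta\lambda-C_{2}\lambda$, which I would force to be nonpositive by taking $C_{2}\geq C_{1}/(\delta\lambda)+\rho\zeta$. The remaining right-hand side is then a linear combination of $\varepsilon_{k}$, $\eta_{k}^{2}$, and $F_{k}$ with coefficients $1$, $\rho(1+\zeta)(1-\theta\lambda)$, and $C_{2}-\rho\zeta$ respectively.

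The final step is to match this against $\sigma\bigl((1-\delta)\varepsilon_{k}+\rho\eta_{k}^{2}+C_{2}F_{k}\bigr)$. The $F_{k}$ slot imposes $\sigma\geq 1-\rho\zeta/C_{2}$, which can be met for any $\sigma$ close to $1$ since $C_{2}$ is fixed. For the $\varepsilon_{k}$ and $\eta_{k}^{2}$ block, the trick is to invoke the global upper bound of Theorem~\ref{thm:est-reliability}, $\varepsilon_{k}\leq C_{U}\eta_{k}^{2}$, to trade a part of $\eta_{k}^{2}$ for $\varepsilon_{k}$. A short calculation then reduces the requirement to
\[
\sigma\;\geq\;\frac{C_{U}+\rho(1+\zeta)(1-\theta\lambda)}{\rho+(1-\delta)C_{U}},
\]
and the strict inequality $\sigma<1$ is equivalent to $\delta C_{U}<\rho\bigl(1-(1+\zeta)(1-\theta\lambda)\bigr)$, which is exactly the hypothesis on $\delta$ stated in the theorem, provided $\zeta$ is first chosen small enough that $(1+\zeta)(1-\theta\lambda)<1$. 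This is always possible since $\theta\in(0,1)$ and $\lambda=1-2^{-1/d}\in(0,1)$, so $\theta\lambda>0$.

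The main obstacle is not any single inequality but the simultaneous parameter tuning: $\zeta$ must be small enough to keep the estimator reduction factor strictly below $1$, $C_{2}$ must be large enough to annihilate the $F_{R_{k}}$ term, and $\delta$ must be small enough that, after the reliability trade, the $(\varepsilon_{k},\eta_{k}^{2})$ block still contracts. The quantitative condition on $\delta$ in the statement is precisely the compatibility bound that guarantees a non-empty window for this balancing act, and once the parameters are fixed, the resulting $\sigma\in(0,1)$ depends only on $\theta$, $\lambda$, $\delta$, $C_{U}$, and the shape regularity of $\mathcal{T}_{0}$ (via $\rho$), as claimed.
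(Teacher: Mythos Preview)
Your proposal is correct and follows essentially the same route as the paper: combine the quasi-orthogonality, the estimator reduction, and the $F$-reduction with weights $1$, $\rho$, $C_2$; choose $C_2$ large to kill the $F_{R_k}$ term; then use the reliability bound $\varepsilon_k\le C_U\eta_k^2$ to trade between the $\varepsilon_k$ and $\eta_k^2$ coefficients, arriving at exactly the same contraction factor $\sigma_1=\dfrac{C_U+\rho(1+\zeta)(1-\theta\lambda)}{\rho+(1-\delta)C_U}$ and $\sigma_2=1-\rho\zeta/C_2$ that the paper obtains. The only cosmetic difference is that the paper first adds $\rho\eta_{k+1}^2$ and then $C_2F_{k+1}$ in two separate steps rather than stating the weighted combination up front.
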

\begin{proof}
By adding $\rho \eta^2_{k+1}$ to both sides of \eqref{eq:quasi-orthogonality}, then applying Lemma \ref{lem:eta}, we have  
\begin{eqnarray}\nonumber
\lefteqn{(1 - \delta)\varepsilon_{k+1} + \rho\eta^2_{k+1}}
\\ \label{eq:convergence1}
&&\leqslant \varepsilon_{k} + \rho(1+\zeta)(1-\theta\lambda)\eta^2_{k}+ 
\left(\zeta\lambda\rho+C_1/\delta\right)F_{R_k} -\rho\zeta F_k,
\end{eqnarray}
for any constant $\delta\in(0, 1)$. Let $C_2>0$ be a to-be-determined constant, by adding $C_2F_{k+1}$ in the both sides of \eqref{eq:convergence1} and applying Lemma \ref{lem:estimate-f}, we obtain
\begin{eqnarray}\nonumber
\lefteqn{(1 - \delta)\varepsilon_{k+1} + \rho\eta^2_{k+1}+C_2F_{k+1}}
\\ \label{eq:convergence2}
&&\leqslant \varepsilon_{k}  + \rho(1+\zeta)(1-\theta\lambda)\eta^2_{k}+ 
(\zeta\lambda\rho+C_1/\delta-\lambda C_2)F_{R_k}+(C_2-\rho\zeta) F_k. \ \quad \ \ \ \
\end{eqnarray}

The inequality above \eqref{eq:convergence2} along with a sufficiently large $C_2$ satisfying 
\begin{equation}\label{eq:convergence3}
\zeta\lambda\rho+C_1/\delta-\lambda C_2\leqslant 0,
\end{equation}
combining Theorem \ref{thm:est-reliability} with \eqref{eq:convergence2}, yields
\begin{eqnarray*} 
\begin{aligned}
\lefteqn{(1 - \delta)\varepsilon_{k+1} + \rho\eta^2_{k+1}+C_2F_{k+1}}\\
&\leqslant \varepsilon_{k}  + 
\rho(1+\zeta)(1-\theta\lambda)\eta^2_{k}+(C_2-\rho\zeta) F_k\\
&\leqslant \sigma_1(1 - \delta) \varepsilon_{k} + 
\big(C_U-C_U\sigma_1(1 - \delta) + \rho(1+\zeta)(1-\theta\lambda)\big) 
\eta^2_{k}+(C_2-\rho\zeta) F_k, 
\end{aligned}
\end{eqnarray*}
according to 
\begin{eqnarray*} 
\rho\sigma_1 = C_U-C_U\sigma_1(1 - \delta) + 
\rho(1+\zeta)(1-\theta\lambda),
\end{eqnarray*}
choose
\begin{eqnarray*} 
\sigma_1 = \dfrac{C_U+\rho(1+\zeta)(1-\theta\lambda)}{C_U+\rho - 
C_U\delta}.
\end{eqnarray*}
Choosing $\zeta$ satisfies $(1+\zeta)(1-\theta\lambda)(1+\delta/\rho)\in(0, 1)$ 
and the requirement 
$0<\delta<\min\left(\dfrac{\rho(1-(1+\zeta)(1-\theta\lambda))}{C_U}, 1\right)$ 
lead to $\sigma_1\in(0, 1)$. By \eqref{eq:convergence3}, we obtain 
$C_2-\rho\zeta>0$.  Now letting $\sigma_2 = (C_2- \rho\zeta)/C_2$ results 
$\sigma_2\in(0, 1)$, and
\begin{eqnarray*}
(1 - \delta)\varepsilon_{k+1} + \rho\eta^2_{k+1}+C_2F_{k+1}
\leqslant\sigma_1(1 - \delta)\varepsilon_{k} + \rho\sigma_1\eta^2_k + C_2\sigma_2F_k.
\end{eqnarray*}
We complete the proof by setting $\sigma= \max\{\sigma_1, \sigma_2\}\in(0, 1)$.
\end{proof}

By recursion, the decay of the error plus the estimator is as follows.

\begin{corollary}
Under the hypotheses of Theorem \ref{thm:convergence}, then we have
\begin{eqnarray*} 
(1 - \delta)\varepsilon_{k+1} + \rho\eta^2_{k+1}+C_2F_{k+1}
\leqslant C_0\sigma^k,
\end{eqnarray*}
where the constant $\rho, \delta$ are given in Theorem \ref{thm:convergence}, and 
$C_{0} = (1 - \delta)e_0 + \rho\eta^2_{0} + C_2F_0$. As a result, The AmWG in Algorithm \ref{alg:amwg} will terminate in finite steps.
\end{corollary}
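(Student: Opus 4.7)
The proof is essentially a one-line recursion on top of Theorem \ref{thm:convergence}, so the plan is short and mostly bookkeeping. First I would define the combined quantity
\[
G_k := (1-\delta)\varepsilon_k + \rho\eta_k^2 + C_2 F_k,
\]
so that Theorem \ref{thm:convergence} reads exactly $G_{k+1}\leqslant \sigma G_k$ with $\sigma\in(0,1)$. Since $\delta,\rho,C_2$ and $\sigma$ are constants that do not depend on the iteration index (they are determined from the shape-regularity of $\mathcal{T}_0$, the marking parameter $\theta$, and $\delta$), this single contraction inequality can be applied uniformly at every level $k$ of the AmWG loop.

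Next I would simply iterate from $k=0$: applying the contraction $k$ times gives
\[
G_{k+1}\leqslant \sigma G_k \leqslant \sigma^{2} G_{k-1}\leqslant\cdots\leqslant \sigma^{k} G_1 \leqslant \sigma^{k}\bigl((1-\delta)\varepsilon_0+\rho\eta_0^2+C_2 F_0\bigr)= C_0\sigma^k,
\]
which is exactly the claimed estimate. No other ingredient is needed for the bound itself.

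For the termination statement, I would argue as follows. Dropping the nonnegative terms $(1-\delta)\varepsilon_{k+1}$ and $C_2 F_{k+1}$ from the left-hand side yields $\rho\eta_{k+1}^2\leqslant C_0\sigma^k$, i.e. $\eta_{k+1}\leqslant (C_0/\rho)^{1/2}\sigma^{k/2}$. Since $\sigma\in(0,1)$, for any prescribed tolerance $\mathtt{tol}>0$ there exists an integer $K$ such that $(C_0/\rho)^{1/2}\sigma^{K/2}<\mathtt{tol}$, and the while-loop in Algorithm \ref{alg:amwg} terminates no later than step $K$.

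There is no genuine obstacle here: the hard work has already been done in Theorem \ref{thm:convergence}, in particular the delicate choice of $\delta$, $\zeta$, $\rho$ and $C_2$ that yields the contraction with a single factor $\sigma<1$. The only minor care required is to ensure that $C_0$ is written with the correct initial quantities (note the statement uses $e_0$ for what is elsewhere denoted $\varepsilon_0$, a notational slip one should reconcile) and that the indexing of the recursion is bookkept correctly so that the exponent on $\sigma$ is $k$ rather than $k-1$ or $k+1$.
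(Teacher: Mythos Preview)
Your proposal is correct and matches the paper's approach exactly: the paper gives no detailed proof of this corollary, stating only ``By recursion, the decay of the error plus the estimator is as follows,'' which is precisely the iteration argument you carry out. Your observation about the $e_0$ versus $\varepsilon_0$ notational slip is accurate, and the termination argument you supply is the intended one.
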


\subsection{Discrete Reliability}
In this section, we prove the discrete reliability. Let $\mathcal{T}_{k+1}$ be a refinement from $\mathcal{T}_{k}$, 
we recall the projection operator $J_{k+1}:V^{\mathrm{nc}}(\mathcal{T}_k)\rightarrow V^{\mathrm{nc}}(\mathcal{T}_{k+1})$ (see~\cite[Section 5]{Hu;Xu;2013}).
\begin{lemma}[\cite{Hu;Xu;2013}, Lemma 5.1]
\label{lem:j-projection}
For any $v_k\in V^{\mathrm{nc}}(\mathcal{T}_{k})$, it holds that
\begin{equation}\label{eq:j-projection}
\|\nabla_{k+1}(J_{k+1}v_k-v_k)\|_{\mathcal{T}}\lesssim 
\left(\sum_{\tau\in R_k} \sum_{e\subset \partial\tau}
  h_\tau\|J_{t,e}(\nabla_k v_k)\|_e^2\right)^{1/2}.
\end{equation}
\end{lemma}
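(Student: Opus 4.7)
The plan is to construct $J_{k+1}$ explicitly via edge-average degrees of freedom and argue locally on refined elements, following the route of Hu and Xu. Since any function in $V^{\mathrm{nc}}(\mathcal{T}_{k+1})$ is uniquely determined by its Crouzeix-Raviart edge averages $\gamma_{e'}(w):=|e'|^{-1}\int_{e'} w\,\mathrm{d}s$ for $e'\in\mathcal{E}_{k+1}$, the operator $J_{k+1}$ is defined by specifying these averages from $v_k$. For a fine edge $e'$ lying on an unrefined coarse edge $e\in\mathcal{E}_k$, set $\gamma_{e'}(J_{k+1}v_k)=\gamma_{e'}(v_k)$, which is well defined because $v_k$ has a single-valued average trace on $e$. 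For a new interior edge $e'$ produced by refining a coarse element $\tau\in R_k$, take $\gamma_{e'}(J_{k+1}v_k)$ to be the average of the one-sided traces of $v_k|_\tau$ from the two adjacent fine sub-elements. With this recipe $J_{k+1}v_k = v_k$ on every coarse element $\tau\notin R_k$, so the error is supported in $\omega(R_k)$ and it suffices to sum over $\tau \in R_k$.

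On each refined element $\tau\in R_k$, the difference $w:=J_{k+1}v_k-v_k$ lies in a finite-dimensional space on the fine sub-mesh of $\tau$. A standard scaling argument and norm equivalence on the reference patch yield
\begin{equation*}
\|\nabla_{k+1} w\|_\tau^2 \;\lesssim\; h_\tau^{-1}\sum_{e'\in \mathcal{E}_{k+1}(\tau)}|e'|\,|\gamma_{e'}(w)|^2.
\end{equation*}
The key structural observation is that if $v_k$ has no tangential jump across every coarse edge adjacent to $\tau$, then $v_k$ is locally $H^1$-conforming around $\tau$, all the DOF mismatches vanish, and $w\equiv 0$. Therefore each linear functional $\gamma_{e'}(w)$ annihilates the subspace of CR functions with continuous tangential gradient across $\partial\tau$, so by finite-dimensional duality it must be controlled by tangential-jump data.

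The main obstacle is making this last step quantitative while isolating only the tangential components of the jumps, since CR continuity already enforces that the average of the normal trace is single-valued on each coarse edge. The cleanest route is to rewrite $\gamma_{e'}(w)$ as a line integral of the tangential derivative of $v_k$ along the relevant coarse edge (using that $v_k$ is piecewise affine, so $\nabla_k v_k$ is piecewise constant), and then invoke a trace inequality to obtain
\begin{equation*}
|e'|\,|\gamma_{e'}(w)|^2 \;\lesssim\; h_\tau\sum_{\substack{e\subset\partial\tau\\ e\cap\overline{e'}\ne\emptyset}} \|J_{t,e}(\nabla_k v_k)\|_e^2.
\end{equation*}
Summing over $e'\subset\partial\tau$, then over $\tau\in R_k$, and absorbing the shape-regularity constants produces the stated estimate. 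The detailed bookkeeping of the contributions from the new interior edges created inside each refined $\tau$ is precisely the content of \cite[Lemma~5.1]{Hu;Xu;2013}, which we invoke as a black box.
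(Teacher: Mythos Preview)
The paper itself gives no proof of this lemma; it is quoted verbatim from \cite{Hu;Xu;2013} and used as a black box. In that sense your proposal, which ends by invoking \cite[Lemma~5.1]{Hu;Xu;2013}, is consistent with the paper's treatment. However, note that this makes the final sentence circular: you are citing the very result you are asked to prove.

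As for the sketch preceding that citation, there is a genuine gap in your definition of $J_{k+1}$. You cover (i) fine edges that coincide with an unrefined coarse edge and (ii) new edges interior to a refined coarse element $\tau$, but you omit the crucial third case: fine edges $e'$ that are halves of a \emph{bisected} coarse edge $e\in\mathcal{E}_k$. For such $e'$, $v_k$ has two distinct one-sided traces (one from each coarse neighbor), and $\gamma_{e'}(v_k)$ is not single-valued; Crouzeix--Raviart continuity only guarantees that the averages over the \emph{whole} coarse edge $e$ agree, not over each half. In Hu--Xu the prolongation assigns $\gamma_{e'}(J_{k+1}v_k)$ to be the mean of these two one-sided averages. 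This omitted case is precisely where the nonzero contribution comes from: on the genuinely interior bisector edges in case (ii), $v_k|_\tau$ is a single affine function, so the two one-sided traces coincide and $\gamma_{e'}(w)=0$ there. All of the action, and hence the appearance of $J_{t,e}(\nabla_k v_k)$, occurs on the sub-edges of split coarse edges, via the identity that the half-edge average mismatch equals (up to sign) $\tfrac{|e'|}{4}\jump{\nabla_k v_k\cdot\bm{t}_e}_e$ for piecewise affine $v_k$. Once you add this case to your construction, the scaling and norm-equivalence argument you outline does yield the bound.
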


\begin{remark}
Lemma \ref{lem:j-projection} was presented in~\cite{Hu;Xu;2013} for Stokes problem with a vector function in the Crouzeix-Raviart space, with a bound using $\|\jump{(\nabla_k \bm{v}_k)\bm{t}_e]}{}\|_e$ on an edge (2D). However, since the proof only relies on the scaling of the nodal basis function, and a partition of unity property of the basis on an edge, both of which holds in 3D tetrahedral nodal basis associated with faces, the result holds for scalar functions by choosing only 1 non-trivial component in the vectorial result, and acknowledging the fact that $\|\jump{\nabla_k v_k\times \bm{n}_e}{}\|_e = 
\|\jump{\operatorname{Proj}_e(\nabla_k v_k)}{}\|_e$ if $e$ is a face on $\partial \tau$.
\end{remark}

\begin{lemma}
\label{lem:discrete-reliability}
The following discrete reliability holds with constant $C_{dr}$ depending on the shape regularity of the mesh
\begin{equation}
E_k\leqslant C_{dr} \eta_{R_k}^2.
\end{equation}
\end{lemma}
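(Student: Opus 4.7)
The plan is to exploit Lemma \ref{lem:I-projection2} to translate the mWG energy into a piecewise-gradient energy of Crouzeix--Raviart functions. Write $U_k := I_{k}^{^{\rm CR}} u_k$ and $U_{k+1} := I_{k+1}^{^{\rm CR}} u_{k+1}$, so that $\nabla_{w,k} u_k = \nabla_k U_k$ and $\nabla_{w,k+1} u_{k+1} = \nabla_{k+1} U_{k+1}$, giving $E_k = \|A^{1/2}(\nabla_k U_k - \nabla_{k+1} U_{k+1})\|^2$. The key preliminary observation I need is that $U_j$ is the standard Crouzeix--Raviart Galerkin solution on $\mathcal{T}_j$ for $j \in \{k, k+1\}$: any $v\in V^{\rm nc}(\mathcal{T}_j)$ embeds into $V_0(\mathcal{T}_j)$ as $(v, Q_b\{v\})$, the stabilization $Q_b\jump{v}{}$ vanishes by the nonconforming constraint for $\ell=1$, and $\nabla_{w,j} v = \nabla_j v$ by Lemma \ref{lem:I-projection2}, so \eqref{eq:pb-mwg} collapses to $(A\nabla_j U_j, \nabla_j v)_{\mathcal{T}_j} = (f, v)$.

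With this in hand, introduce $w := U_{k+1} - J_{k+1} U_k \in V^{\rm nc}(\mathcal{T}_{k+1})$, taking $J_{k+1}$ from Lemma \ref{lem:j-projection} to act as the identity on $\mathcal{T}_k \cap \mathcal{T}_{k+1}$. Writing $\nabla_k U_k - \nabla_{k+1} U_{k+1} = \nabla_{k+1}(U_k - J_{k+1} U_k) - \nabla_{k+1} w$ and expanding gives
\[
E_k = \bigl(A\nabla_{k+1}(U_k - J_{k+1}U_k),\, \nabla_k U_k - \nabla_{k+1}U_{k+1}\bigr)_{\mathcal{T}_{k+1}} - \bigl(A\nabla_{k+1} w,\, \nabla_k U_k - \nabla_{k+1}U_{k+1}\bigr)_{\mathcal{T}_{k+1}}.
\]
The first inner product is immediately controlled by Cauchy--Schwarz, Lemma \ref{lem:j-projection}, and the identity $J_{t,e}(\nabla_k U_k) = J_{t,e}(\nabla_{w,k} u_k)$, yielding a bound of the form $C\,\eta_{R_k}\, E_k^{1/2}$.

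For the second inner product, fine-mesh Galerkin orthogonality tested against the nonconforming $w$ gives $(A\nabla_{k+1} U_{k+1}, \nabla_{k+1} w) = (f, w)$. To process $(A\nabla_k U_k, \nabla_{k+1} w)_{\mathcal{T}_{k+1}}$, I would define $\tilde w \in V^{\rm nc}(\mathcal{T}_k)$ by $\int_{e}\tilde w = \int_{e}\{w\}$ on every $e\in \mathcal{E}_k$. Because $A\nabla_k U_k$ is piecewise constant on $\mathcal{T}_k$ and $\int_{e}\jump{w}{} = 0$ on every coarse edge (summing the nonconforming constraint of $w$ over its sub-edges), element-wise integration by parts on $\mathcal{T}_k$ shows that both $(A\nabla_k U_k, \nabla_{k+1} w)_{\mathcal{T}_{k+1}}$ and $(A\nabla_k U_k, \nabla_k \tilde w)_{\mathcal{T}_k}$ reduce to the same sum $\sum_{e\in \mathcal{E}_k^{\rm int}} \jump{A\nabla_k U_k\cdot \bm{n}_e}{}\int_{e}\{w\}$, and coarse-mesh Galerkin orthogonality then identifies the common value with $(f, \tilde w)$. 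Consequently the second inner product equals $(f, \tilde w - w)$, which is supported only on $R_k$ because $w|_\tau \in P_1(\tau)$ on every $\tau \in \mathcal{T}_k\cap\mathcal{T}_{k+1}$ is exactly reproduced by the CR interpolant with averaged edge data.

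Finally, $(f, \tilde w - w)_{R_k}$ is bounded by weighting with $h_\tau A_\tau^{-1/2}$ and applying Cauchy--Schwarz to produce the factor $F_{R_k}^{1/2}\leq \eta_{R_k}$, together with a scaled interpolation estimate $h_\tau^{-1}\|\tilde w - w\|_\tau \lesssim \|\nabla_{k+1} w\|_{\tau}$ and the triangle inequality $\|A^{1/2}\nabla_{k+1} w\| \leq E_k^{1/2} + \|A^{1/2}\nabla_{k+1}(J_{k+1}U_k - U_k)\| \lesssim E_k^{1/2} + \eta_{R_k}$, via a second invocation of Lemma \ref{lem:j-projection}. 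Collecting the two estimates yields $E_k \lesssim \eta_{R_k} E_k^{1/2} + \eta_{R_k}^2$, and Young's inequality delivers the claim. The hardest step is the scaled interpolation estimate for $\tilde w - w$ on refined elements: $w$ is only piecewise $P_1$ across the fine subdivision of $\tau \in R_k$, so a careful Bramble--Hilbert argument that exploits the matching $\int_{e}(\tilde w - w|_\tau) = \tfrac{1}{2}\int_{e}\jump{w}{} = 0$ on coarse edges is required to push the fine-mesh gradient bound through, and one has to verify that $J_{k+1}$ indeed acts as the identity on $\mathcal{T}_k\cap\mathcal{T}_{k+1}$ (this is built into the construction from \cite{Hu;Xu;2013}).
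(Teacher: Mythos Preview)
Your proposal is correct and follows essentially the same route as the paper. Your $w$ is the paper's $v_{k+1}^{\scriptscriptstyle{\mathrm{CR}}}=I_{k+1}^{^{\mathrm{CR}}}u_{k+1}-J_{k+1}I_{k}^{^{\mathrm{CR}}}u_{k}$, your $\tilde w$ is exactly $I_{k}^{^{\mathrm{CR}}}v_{k+1}^{\scriptscriptstyle{\mathrm{CR}}}$, and the paper reaches the same residual term $(f,(I-I_{k}^{^{\mathrm{CR}}})v_{k+1}^{\scriptscriptstyle{\mathrm{CR}}})$ supported on $R_k$; the only cosmetic difference is that the paper invokes Lemma~\ref{lem:I-projection1} directly to equate $(A\nabla_{w,k}u_k,\nabla_{k+1}w)$ with $(A\nabla_{w,k}u_k,\nabla_k I_k^{^{\mathrm{CR}}}w)$ rather than spelling out the edge-integral identity and the explicit Crouzeix--Raviart reformulation as you do.
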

\begin{proof}
By Lemma \ref{lem:I-projection1} and \ref{lem:I-projection2}, we obtain 
\begin{eqnarray}\nonumber
\hspace{-0.28cm} E_k&=&\|A^{1/2}(\nabla_{w, k+1}u_{k+1}- \nabla_{w, k}u_{k})\|_{\mathcal{T}_{k+1}}^2
\\ \nonumber
&=&
(A(\nabla_{w, k+1}u_{k+1}- \nabla_{w, k}u_{k}), \nabla_{k+1}I_{k+1}^{^{\mathrm{CR}}}u_0^{k+1}-\nabla_{k}I_{k}^{^{\mathrm{CR}}}u_0^{k})_{\mathcal{T}_{k+1}}
\\ \nonumber
&=& (A(\nabla_{w, k+1}u_{k+1}- \nabla_{w, k}u_{k}), \nabla_{k+1}I_{k+1}^{^{\mathrm{CR}}}u_0^{k+1}-\nabla_{k+1}J_{k+1}I_{k}^{^{\mathrm{CR}}}u_0^{k})_{\mathcal{T}_{k+1}}
\\ \label{eq:dr-1}
&&\hspace{-0.18cm} +\  (A(\nabla_{w, k+1}u_{k+1}- \nabla_{w, k}u_{k}), \nabla_{k+1}J_{k+1} I_{k}^{^{\mathrm{CR}}}u_0^{k}-\nabla_{k}I_{k}^{^{\mathrm{CR}}}u_0^{k})_{\mathcal{T}_{k+1}}.\ \ \ 
\end{eqnarray}
For the first term on the right-hand side of the equation above,  denote 
$v_{k+1}^{\scriptscriptstyle{\mathrm{CR}}}=I_{k+1}^{^{\mathrm{CR}}}u_0^{k+1}-J_{k+1}I_{k}^{^{\mathrm{CR}}}u_0^{k}$, it follows from $Q_b\jumpat{v_{k+1}^{\scriptscriptstyle{\mathrm{CR}}}}{e_{k+1}} =0$, $\forall e_{k+1}\in \mathcal{E}_{k+1}$ that
\begin{eqnarray*}
(A\nabla_{w, k+1}u_{k+1}, \nabla_{k+1}v_{k+1}^{\scriptscriptstyle{\mathrm{CR}}})_{\mathcal{T}_{k+1}}=(f, v_{k+1}^{\scriptscriptstyle{\mathrm{CR}}}),
\end{eqnarray*}
and $(\nabla_{k+1} v_{k+1}^{\scriptscriptstyle{\mathrm{CR}}}, \bm{\sigma})_{\tau} = (\nabla_k I_k^{^{\mathrm{CR}}} v_{k+1}^{\scriptscriptstyle{\mathrm{CR}}}, \bm{\sigma})_\tau$ for a constant vector $\bm{\tau}$ on $\forall \tau\in \mathcal{T}_k\backslash \mathcal{T}_{k+1}$
\begin{eqnarray*}
\begin{aligned}
(A\nabla_{w, k}u_{k}, \nabla_{k+1}v_{k+1}^{\scriptscriptstyle{\mathrm{CR}}})_{\mathcal{T}_{k+1}}
=(A\nabla_{w, k}u_{k}, \nabla_{k}I_{k}^{^{\mathrm{CR}}}v_{k+1}^{\scriptscriptstyle{\mathrm{CR}}})_{\mathcal{T}_{k}}
=(f, I_{k}^{^{\mathrm{CR}}}v_{k+1}^{\scriptscriptstyle{\mathrm{CR}}}).
\end{aligned}
\end{eqnarray*}
Combining both further implies 
\begin{eqnarray} \nonumber
\lefteqn{
\big(A(\nabla_{w, k+1}u_{k+1}-\nabla_{w, k}u_{k}), \nabla_{k}v_{k}^{\scriptscriptstyle{\mathrm{CR}}}\big)_{\mathcal{T}_{k+1}}}
\\ \nonumber
&&=(f, (I-I_{k}^{^{\mathrm{CR}}})v_{k+1}^{\scriptscriptstyle{\mathrm{CR}}})
\\ \nonumber
&&\lesssim F_{R_{k}}^{1/2} \cdot \|A^{1/2}\nabla_{k+1}(I_{k+1}^{^{\mathrm{CR}}}u_0^{k+1}-J_{k+1}I_{k}^{^{\mathrm{CR}}}u_{k})\|_{\mathcal{T}_{k+1}}
\\ \nonumber
&& \leqslant F_{R_{k}}^{1/2} \|A^{1/2}(\nabla_{w, k+1}u_{k+1}- \nabla_{w, k}u_{k})\|_{\mathcal{T}_{k+1}}
\\  \label{eq:dr-2}
&&\quad + \|A^{1/2}(\nabla_{k+1} J_{k+1}I_{k}^{^{\mathrm{CR}}}u_0^{k} -\nabla_{k}I_{k}^{^{\mathrm{CR}}}u_0^{k})\|_{\mathcal{T}_{k+1}}).
\end{eqnarray}
For the second term in \eqref{eq:dr-1}, together with Lemma \ref{lem:I-projection2}, applying the Cauchy-Schwarz inequality implies
\begin{eqnarray}\nonumber
\lefteqn{(A(\nabla_{w, k+1}u_{k+1}- \nabla_{w, k}u_{k}), \nabla_{k+1}J_{k+1} I_{k}^{^{\mathrm{CR}}}u_0^{k}-\nabla_{k}I_{k}^{^{\mathrm{CR}}}u_0^{k})_{\mathcal{T}_{k+1}}}
\\ \nonumber
&&\leqslant \|A^{1/2}(\nabla_{w, k+1}u_{k+1}- \nabla_{w, k}u_{k})\|_{\mathcal{T}_{k+1}}
\\ \label{eq:dr-3}
&&\quad \cdot \|A^{1/2}(\nabla_{k+1} J_{k+1}I_{k}^{^{\mathrm{CR}}}u_0^{k} -\nabla_{k}I_{k}^{^{\mathrm{CR}}}u_0^{k})\|_{\mathcal{T}_{k+1}}.\ \hspace{2.38cm}
\end{eqnarray}

After inserting \eqref{eq:dr-2} and \eqref{eq:dr-3} into \eqref{eq:dr-1}, using the Young's inequality implies
\begin{equation}
E_k\lesssim F_{R_{k}} + \|A^{1/2}\nabla_{k+1}(J_{k+1}I_{k}^{^{\mathrm{CR}}}u_0^{k} -I_{k}^{^{\mathrm{CR}}}u_0^{k})\|_{\mathcal{T}_{k+1}}^2.
\end{equation}
At last, using Lemma \ref{lem:j-projection} leads to the desired result.
\end{proof}

\subsection{The Optimality of the AmWG}
In this section, the optimality of the AmWG Algorithm \ref{alg:amwg} will be shown. First a C\'ea-type lemma can be obtained as follows.
 
\begin{lemma}
There exists a constant $C_3$ depending only on the shape regularity of $\mathcal{T}$ such that
\begin{eqnarray}\nonumber
\lefteqn{\|A^{1/2}(\nabla u - \nabla_w u_{\mathcal{T}})\|_\mathcal{T}^2+ F(f, \mathcal{T})}\\ \label{eq:cea}
&&\leqslant
C_4\inf_{v_\mathcal{T}\in V(\mathcal{T})}\left(\|A^{1/2}(\nabla u - \nabla_w v_{\mathcal{T}})\|_{\mathcal{T}}^2 + F(f, \mathcal{T})\right).
\end{eqnarray}
\end{lemma}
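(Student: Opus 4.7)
The plan is to reduce \eqref{eq:cea} to the standard quasi-optimality of the Crouzeix--Raviart method by exploiting Lemma \ref{lem:I-projection2}, which states $\nabla_w v_\mathcal{T}=\nabla_h I_{\mathcal{T}}^{^{\rm CR}} v_\mathcal{T}$.

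First, I would identify $u^{\rm nc}:=I_{\mathcal{T}}^{^{\rm CR}} u_\mathcal{T}\in V_0^{\rm nc}(\mathcal{T})$ as the Crouzeix--Raviart Galerkin approximation of $u$. Any $v^{\rm nc}\in V_0^{\rm nc}(\mathcal{T})$, embedded into $V_0(\mathcal{T})$ by $\{v^{\rm nc},Q_b\{v^{\rm nc}\}\}$, causes the stabilization term in $a_\mathcal{T}$ to vanish, because $\int_e\jump{v^{\rm nc}}=0$ forces $Q_b\jump{v^{\rm nc}}=0$ at $\ell=1$, while $I_{\mathcal{T}}^{^{\rm CR}}$ fixes CR functions, so $\nabla_w v^{\rm nc}=\nabla_h v^{\rm nc}$. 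Testing \eqref{eq:pb-mwg} with $v^{\rm nc}$ therefore yields the CR Galerkin equation $(A\nabla_h u^{\rm nc},\nabla_h v^{\rm nc})_\mathcal{T}=(f,v^{\rm nc})$, and in particular $\nabla_w u_\mathcal{T}=\nabla_h u^{\rm nc}$. The same Lemma \ref{lem:I-projection2}, together with the surjectivity of $I_{\mathcal{T}}^{^{\rm CR}}:V(\mathcal{T})\to V^{\rm nc}(\mathcal{T})$ (the preimage of a CR function $v^{\rm nc}$ being simply $\{v^{\rm nc},Q_b\{v^{\rm nc}\}\}$), collapses the right-hand-side infimum in \eqref{eq:cea} to $\inf_{v^{\rm nc}\in V^{\rm nc}(\mathcal{T})}\|A^{1/2}(\nabla u-\nabla_h v^{\rm nc})\|_\mathcal{T}^2$.

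With the problem now phrased in CR terms, a Strang-type estimate closes the argument. For any $v^{\rm nc}\in V_0^{\rm nc}(\mathcal{T})$, set $w^{\rm nc}=u^{\rm nc}-v^{\rm nc}$; the CR Galerkin equation and an element-wise integration by parts on $(A\nabla u,\nabla_h w^{\rm nc})_\mathcal{T}$ using $-\nabla\cdot(A\nabla u)=f$ produce the consistency term $\sum_e\int_e(A\nabla u\cdot\bm n_e)\jump{w^{\rm nc}}$. Since $\int_e\jump{w^{\rm nc}}=0$ by CR continuity, one may subtract the edge mean $\overline{A\nabla u\cdot\bm n_e}$ and bound the remainder by $F(f,\mathcal{T})^{1/2}\|A^{1/2}\nabla_h w^{\rm nc}\|$ via a trace-plus-scaling estimate. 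Young's and the triangle inequalities then deliver \eqref{eq:cea}, after absorbing the small gap between $\inf_{V^{\rm nc}}$ and $\inf_{V_0^{\rm nc}}$ into the oscillation by using $u|_{\partial\Omega}=0$.

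The main obstacle is the oscillation bound $\sum_e h_e(A_e^{\max})^{-1}\|A\nabla u\cdot\bm n_e-\overline{A\nabla u\cdot\bm n_e}\|_e^2\lesssim F(f,\mathcal{T})$, which must control the tangential variation of the normal flux by the data alone. It follows from an element Poincar\'{e}/trace inequality anchored at $-\nabla\cdot(A\nabla u)=f$, but the piecewise-constant coefficient $A$ and the $A_e^{\max}/A_e^{\min}$ ratios baked into $F(f,\mathcal{T})$ and $\eta$ demand careful elementwise bookkeeping to match the exact weights.
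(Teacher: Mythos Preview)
Your reduction to the Crouzeix--Raviart problem via Lemma~\ref{lem:I-projection2} is correct and matches the paper's first step exactly: $\nabla_w u_\mathcal{T}=\nabla_h I_\mathcal{T}^{^{\rm CR}}u_\mathcal{T}$, and $I_\mathcal{T}^{^{\rm CR}}u_\mathcal{T}$ is precisely the CR Galerkin solution, so both sides of \eqref{eq:cea} are expressed in CR quantities and a Strang estimate is the right tool.

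The gap is in your treatment of the consistency error. After the elementwise integration by parts you write the residual as $\sum_e\int_e(A\nabla u\cdot\bm n_e)\jump{w^{\rm nc}}$ and then subtract the edge mean of $A\nabla u\cdot\bm n_e$. But for $f\in L^2$ and $u\in H^1_0(\Omega)$ you only know $A\nabla u\in H(\mathrm{div};\Omega)$; the normal trace on an edge lives in $H^{-1/2}$, not in $L^2$, so the quantity $\|A\nabla u\cdot\bm n_e-\overline{A\nabla u\cdot\bm n_e}\|_e$ need not even be finite. More importantly, the bound you flag as ``the main obstacle'' simply fails in general: a Poincar\'e/trace inequality on $e$ would require control of the \emph{tangential} derivative of $A\nabla u\cdot\bm n_e$, i.e.\ of second derivatives of $u$, and the equation $-\nabla\cdot(A\nabla u)=f$ controls only the divergence. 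On an L-shaped domain with smooth $f$ the normal-flux oscillation near the reentrant corner is $O(h_\tau^{s-1/2})$ with $s<1$, while $F(f,\tau)^{1/2}=O(h_\tau)$, so the claimed inequality cannot hold with a shape-regularity constant.

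The paper avoids this regularity obstacle by the ``medius'' device of Hu--Xu \cite{Hu;Xu;2013}: it introduces a conforming interpolation $\Upsilon_\mathcal{T}:V^{\rm nc}(\mathcal{T})\to V^{\rm c}(\mathcal{T})\subset H^1_0(\Omega)$ (into a $P_3$ space) that preserves edge $P_1$-moments and element means. Because $\Upsilon_\mathcal{T} I_\mathcal{T}^{^{\rm CR}}v_\mathcal{T}\in H^1_0$, one can invoke the \emph{continuous} weak formulation $(A\nabla u,\nabla\Upsilon_\mathcal{T} I_\mathcal{T}^{^{\rm CR}}v_\mathcal{T})=(f,\Upsilon_\mathcal{T} I_\mathcal{T}^{^{\rm CR}}v_\mathcal{T})$ and never perform an elementwise integration by parts on $A\nabla u$. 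The consistency residual then reduces to volume terms only; the moment-preservation of $\Upsilon_\mathcal{T}$ kills the piecewise-constant contribution $(A\nabla_w u_\mathcal{T},\nabla_h(I_\mathcal{T}^{^{\rm CR}}v_\mathcal{T}-\Upsilon_\mathcal{T} I_\mathcal{T}^{^{\rm CR}}v_\mathcal{T}))$, and the remaining pieces are bounded by $\|A^{1/2}(\nabla u-\nabla_w u_\mathcal{T})\|$ and $F(f,\mathcal{T})^{1/2}$ using only the approximation property \eqref{eq:r-projection2}. That is the missing ingredient you need to replace your edge argument.
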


\begin{proof}
The application of Strang's lemma~\cite{Ciarlet:1978Finite} yields
\begin{eqnarray}\nonumber
\lefteqn{\|A^{1/2}(\nabla u - \nabla_w u_{\mathcal{T}})\|_{\mathcal{T}}^2}
\\ \nonumber
&&=\|A^{1/2}(\nabla u - \nabla_\mathcal{T} I_{\mathcal{T}}^{^{\mathrm{CR}}} u_{0})\|_{\mathcal{T}}^2
\\  \nonumber
&&\lesssim\|A^{1/2}(\nabla u - \nabla_\mathcal{T} I_{\mathcal{T}}^{^{\mathrm{CR}}} v_{0})\|_{\mathcal{T}}^2
\\ \label{eq:cea_1}
&&\quad+\sup_{v_{\mathcal{T}}\in V(\mathcal{T})}\frac{(A\nabla u, \nabla_\mathcal{T} I_{\mathcal{T}}^{^{\mathrm{CR}}} v_{0})- (f, I_{\mathcal{T}}^{^{\mathrm{CR}}} v_{0})}{\|\nabla_\mathcal{T} I_{\mathcal{T}}^{^{\mathrm{CR}}} v_{0}\|} .
\end{eqnarray}
We need to define the following higher order conforming finite element space
\begin{equation}\label{eq:S}
V^{\mathrm{c}}(\mathcal{T}):=\{v\in H_0^1(\Omega), v|_\tau\in (P_3(K))^d, d=2, 3, \forall \tau\in \mathcal{T}\},
\end{equation}
there exists an interpolation $\Upsilon_\mathcal{T}: V^{\mathrm{nc}}(\mathcal{T})\rightarrow V^{\mathrm{c}}(\mathcal{T})$ with following properties (see~\cite[Section 6]{Hu;Xu;2013})
\begin{equation}
\begin{aligned}\label{eq:r-projection1}
&\int_e (w_\mathcal{T}-\Upsilon_\mathcal{T}w_\mathcal{T})\cdot c_e \mathrm{d}s=0, \quad \forall c_e\in P_1(e),
\\
&\int_\tau(w_\mathcal{T}-\Upsilon_\mathcal{T}w_\mathcal{T})\mathrm{d} x=0,
\end{aligned}
\end{equation}
for $w_\mathcal{T}\in V^{\mathrm{nc}}(\mathcal{T})$,  edge/face $e$ and $\tau\in \mathcal{T}$. We also have
\begin{equation}\label{eq:r-projection2}
\begin{aligned}
\|w_\mathcal{T}-\Upsilon_\mathcal{T}w_\mathcal{T}\|_{\mathcal{T}} + h_\tau \|\nabla \Upsilon_\mathcal{T}w_\mathcal{T}\|_{\mathcal{T}} 
\lesssim h_\tau \|\nabla_h w_\mathcal{T}\|_{0, \omega(\tau)}.
\end{aligned}
\end{equation}
For any $v_{\mathcal{T}}=\{v_0, v_b\}\in V^(\mathcal{T})$, the following decomposition holds:
\begin{eqnarray*}
\begin{aligned}
 \lefteqn{(A\nabla u, \nabla_\mathcal{T} I_{\mathcal{T}}^{^{\mathrm{CR}}} v_{0})- (f, I_{\mathcal{T}}^{^{\mathrm{CR}}}v_{0})}\\
 &=(A\nabla u -A\nabla_\mathcal{T} I_{\mathcal{T}}^{^{\mathrm{CR}}} u_{0}, \nabla_\mathcal{T} (I_{\mathcal{T}}^{^{\mathrm{CR}}} v_{0}-\Upsilon_\mathcal{T} I_{\mathcal{T}}^{^{\mathrm{CR}}} v_{0}))\\
 &\quad- (f, I_{\mathcal{T}}^{^{\mathrm{CR}}} v_{0}-\Upsilon_\mathcal{T} I_{\mathcal{T}}^{^{\mathrm{CR}}} v_{0})) 
+ (A\nabla_\mathcal{T} I_{\mathcal{T}}^{^{\mathrm{CR}}} u_{0}, \nabla_\mathcal{T} (I_{\mathcal{T}}^{^{\mathrm{CR}}} v_{0}-\Upsilon_\mathcal{T} I_{\mathcal{T}}^{^{\mathrm{CR}}} v_{0})), 
\end{aligned}
\end{eqnarray*}
By the properties \eqref{eq:r-projection1} and \eqref{eq:r-projection2}, we have
\begin{eqnarray}\nonumber
  \lefteqn{(A\nabla u, \nabla_\mathcal{T} I_{\mathcal{T}}^{^{\mathrm{CR}}} v_{0})- (f, I_{\mathcal{T}}^{^{\mathrm{CR}}}v_{\mathcal{T}})}
 \\ 
\label{eq:cea_2}
&&\qquad  \lesssim\|\nabla_\mathcal{T} I_{\mathcal{T}}^{^{\mathrm{CR}}} v_{0}\|  \cdot \|A^{1/2}(\nabla u - \nabla_\mathcal{T} I_{\mathcal{T}}^{^{\mathrm{CR}}} v_{0})\|_{\mathcal{T}} + F^{1/2}(f, \mathcal{T}) \cdot \|\nabla_\mathcal{T} I_{\mathcal{T}}^{^{\mathrm{CR}}} v_{0}\|.\ \ \ \ \ \ \ \ 
\end{eqnarray}
After inserting \eqref{eq:cea_2} into \eqref{eq:cea_1}, we use Young's inequality to have the desired result \eqref{eq:cea}.
\end{proof}

Let $\mathbb{T}_N$ be the set of all partitions $\mathcal{T}$ which is refined from $\mathcal{T}_0$
and $\#\mathcal{T} \leqslant N$. For a given partition $\mathcal{T}$, we introduce the following semi-norm:
\begin{equation}\label{eq:u-fnorm}
|u, f|_s^2=\sup_{N>s}N^s\inf_{\mathcal{T}\in\mathbb{T}_N}\left(\inf_{v_\mathcal{T}\in V(\mathcal{T})}\|A^{1/2}(\nabla u -\nabla_w v_{\mathcal{T}})\|_\mathcal{T}^2 + F(f, \mathcal{T}) \right),
\end{equation}
and the approximation class is then defined as follows, for $s > 0$:
\begin{equation}\label{eq:As}
\mathbb{A}_s:=\{(u, f): |u, f|_s< +\infty\}.
\end{equation}

In this case, we recall all ingredients needed for the optimality of the adaptive procedure:
\begin{description}
  \item[(1)] Quasi-orthogonality in Lemma \ref{lem:quasi-orthogonality}:
  \begin{eqnarray*}
 (A(\nabla u-\nabla_{w,k+1}u_{k+1}), \nabla_{w,k+1}u_{k+1}-\nabla_{w,k}u_k)_{\mathcal{T}_{k+1}}
 \leqslant \sqrt{C_1} F_{R_k}^{1/2}\cdot \varepsilon_{k+1};
  \end{eqnarray*}
  \item[(2)] Discrete Reliability in Lemma \ref{lem:discrete-reliability}:
\begin{eqnarray*}
E_k\leqslant C_{dr} \eta_{R_k}^2;
  \end{eqnarray*}
  \item[(3)] The lower bound when the polynomial degree $\ell=1$ : 
\begin{eqnarray*}
C_{L}\eta(\nabla_w \uH, \mathcal{T})^2 \leqslant 
\|A^{1/2}(\nabla u-\nabla_w \uH)\|_{\mathcal{T}}^2 + 
F(f, \mathcal{T})
  \end{eqnarray*}
\end{description}

Thanks to these preparations, following~\cite{Huang;Xu;2013}, the optimality result is as follows:
\begin{theorem}
Let $u$ be the solution of \eqref{eq:pb-model}, $\{\mathcal{T}_N, u_N, \eta(\nabla_w u_k,\mathcal{T}_N)\}_{N\geq 0}$ be a sequence of meshes, finite element approximations and error estimators produced by Algorithm \ref{alg:amwg}. For $(u, f)\in \mathbb{A}_s$ with
\begin{equation}
\theta\in\left(0, \frac{\min(1, C_L)}{\min(1, C_L)+C_1+1}\right),
\end{equation}
then it holds that
\begin{equation} 
\|A^{1/2}(\nabla u-\nabla_w u_{\mathcal{T}_N})\|_{\mathcal{T}_N}^2 + F(f, \mathcal{T}_N) \lesssim |u, f|_s^2(\#\mathcal{T}_N-\#\mathcal{T}_0)^{-2s}.
\end{equation}
\end{theorem}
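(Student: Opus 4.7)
The plan is to follow the now-standard optimality machinery for AFEM (Stevenson; Cascon--Kreuzer--Nochetto--Siebert; Huang--Xu), assembling the three ingredients labeled (1)--(3) in the text together with the C\'ea-type lemma and the contraction from Theorem \ref{thm:convergence}. The argument proceeds in three stages.

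\textbf{Stage 1 (Quasi-optimality of D\"orfler marking).} I would first show that there is a threshold $\mu_0 = \mu_0(C_1, C_{dr}, C_L, \theta) > 0$ such that whenever $\mathcal{T}_k \leq \mathcal{T}_*$ are two conforming refinements satisfying
\[
\|A^{1/2}(\nabla u - \nabla_{w,*} u_{\mathcal{T}_*})\|_{\mathcal{T}_*}^2 + F(f,\mathcal{T}_*) \;\leq\; \mu_0\bigl(\varepsilon_k + F_k\bigr),
\]
the refined set $\mathcal{R}_{\mathcal{T}_k\to\mathcal{T}_*}$ automatically satisfies the D\"orfler bulk criterion $\eta^2(\nabla_{w,k}u_k,\mathcal{R}_{\mathcal{T}_k\to\mathcal{T}_*}) \geq \theta\,\eta_k^2$. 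To do so, I combine the quasi-orthogonality (1), which relates $\varepsilon_k-\varepsilon_*$ to $E_k$ up to a $\sqrt{F_{R_k}}\,\varepsilon_*$ perturbation, with the discrete reliability (2), which bounds $E_k \lesssim \eta_{R_k}^2$; then the efficiency (3) allows me to replace $\varepsilon_k + F_k$ by the estimator $\eta_k^2$ up to an oscillation that is absorbed into $F_k$. The ceiling on $\theta$ stated in the theorem, $\theta < \min(1,C_L)/(\min(1,C_L)+C_1+1)$, is exactly the algebraic slack needed for a nontrivial fraction of $\eta_k^2$ to survive on the left-hand side.

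\textbf{Stage 2 (Cardinality of the marked sets).} For each iteration $k$, the assumption $(u,f)\in \mathbb{A}_s$ together with \eqref{eq:u-fnorm} yields a partition $\mathcal{T}_\epsilon\in \mathcal{C}(\mathcal{T}_0)$ with
\[
\inf_{v_\mathcal{T}\in V(\mathcal{T}_\epsilon)}\|A^{1/2}(\nabla u-\nabla_w v_{\mathcal{T}_\epsilon})\|_{\mathcal{T}_\epsilon}^2 + F(f,\mathcal{T}_\epsilon) \;\leq\; \mu_0(\varepsilon_k + F_k),
\qquad \#\mathcal{T}_\epsilon - \#\mathcal{T}_0 \;\lesssim\; \bigl(\mu_0(\varepsilon_k+F_k)\bigr)^{-1/(2s)} |u,f|_s^{1/s}.
\]
The C\'ea-type estimate \eqref{eq:cea} promotes this to the corresponding Galerkin approximation error. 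Taking the overlay $\mathcal{T}_k\oplus\mathcal{T}_\epsilon$ and applying Stage 1 shows that $\mathcal{R}_{\mathcal{T}_k\to\mathcal{T}_k\oplus\mathcal{T}_\epsilon}$ is a D\"orfler set for $\mathcal{T}_k$; by the minimum cardinality requirement in \eqref{eq:mark},
\[
\#\mathcal{M}_k \;\leq\; \#(\mathcal{T}_k\oplus\mathcal{T}_\epsilon) - \#\mathcal{T}_k \;\leq\; \#\mathcal{T}_\epsilon - \#\mathcal{T}_0 \;\lesssim\; (\varepsilon_k+F_k)^{-1/(2s)}|u,f|_s^{1/s}.
\]

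\textbf{Stage 3 (Summation via NVB complexity).} I invoke the Binev--Dahmen--DeVore / Stevenson complexity bound for newest vertex bisection, $\#\mathcal{T}_N - \#\mathcal{T}_0 \lesssim \sum_{k=0}^{N-1}\#\mathcal{M}_k$, then substitute the Stage 2 bound and use the geometric contraction $(1-\delta)\varepsilon_k + \rho\eta_k^2 + C_2 F_k \leq C_0 \sigma^k$ from Theorem \ref{thm:convergence} to sum the resulting geometric series in $\sigma^{-k/(2s)}$. The final inequality is dominated by its last term, giving $\#\mathcal{T}_N-\#\mathcal{T}_0 \lesssim |u,f|_s^{1/s}(\varepsilon_N+F_N)^{-1/(2s)}$, equivalent to the claimed rate after inverting and applying reliability.

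The main obstacle is Stage 1: unlike the conforming setting, the quasi-orthogonality of Lemma \ref{lem:quasi-orthogonality} carries an unremovable data term $F_{R_k}$, so the natural ``error'' being contracted is $\varepsilon + F$, not $\varepsilon$ alone. Consequently the cancellation producing the D\"orfler condition must be carried out against the composite quantity, and the precise constants (in particular the $C_1+1$ appearing in the admissible range for $\theta$) must be tracked. Once that bookkeeping is done, Stages 2 and 3 are essentially identical to the arguments in \cite{Hu;Xu;2013,Huang;Xu;2013}.
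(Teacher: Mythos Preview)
Your proposal is correct and follows exactly the approach the paper takes: the paper does not give a detailed proof at all, but simply lists the three ingredients (quasi-orthogonality, discrete reliability, lower bound) together with the C\'ea-type lemma and states that the result follows ``following \cite{Huang;Xu;2013}.'' Your three-stage outline (D\"orfler-optimality of the marked set, cardinality bound via overlay and the approximation class, summation via the BDD/Stevenson complexity estimate and the contraction of Theorem~\ref{thm:convergence}) is precisely the standard machinery the paper is invoking, and your observation that the composite quantity $\varepsilon+F$ must replace $\varepsilon$ throughout because of the $F_{R_k}$ term in the quasi-orthogonality is the one nonstandard bookkeeping point that distinguishes this setting.
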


\section{Numerical Experiments}
\label{sec:numerics}
In this section, with the aid of the MATLAB software package iFEM~\cite{Chen2009ifem}, we implement the following numerical experiments to verify the convergence and quasi-optimality of the Algorithm \ref{alg:AmWG}. 

\begin{example}\label{exa:2}
In this example, we choose a square  domain $\Omega = (-1, 1)^2$ and coefficient $A = \mathbf{I}$, the exact solution of \eqref{eq:pb-model} is $u(x, y)=\frac{y(x^2-1)(y^2-1)}{x^2+y^2+0.01}$ . 
\end{example} 

\RV{On the left of Figure \ref{fig:AmWG.2} shows the initial mesh $\mathcal{T}_0$ for Example \ref{exa:2}; on the right of Figure \ref{fig:AmWG.2} shows the refined mesh after $k=18$ iterations for the Example \ref{exa:2} with $\theta=0.5$. }

\begin{figure}[H]
	\centering
	\subfigure{
		\begin{minipage}[t]{0.4\linewidth}
			\centering
			\includegraphics[width=2.3in]{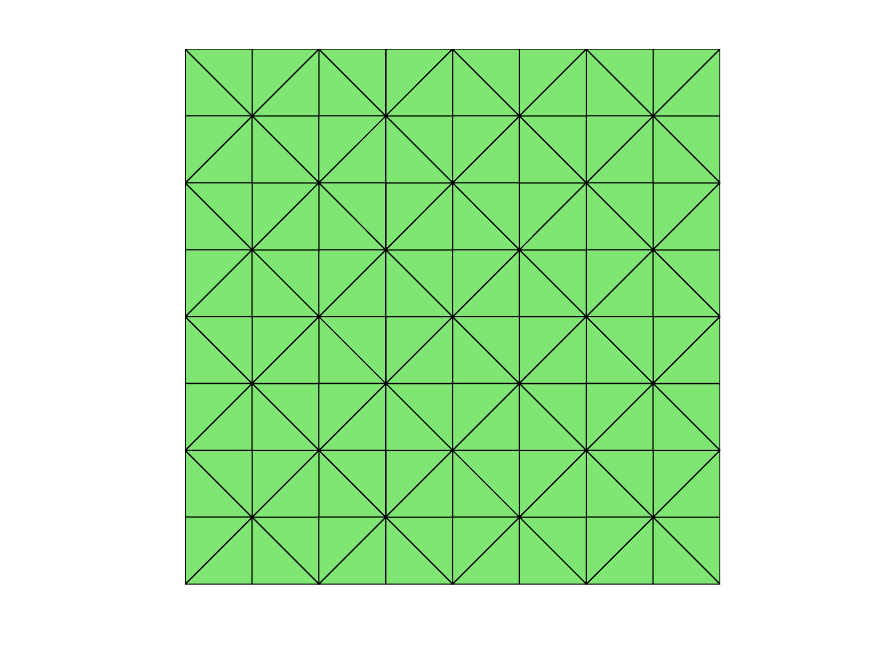}
	\end{minipage}}
	\subfigure{
		\begin{minipage}[t]{0.4\linewidth}
			\centering
			\includegraphics[width=2.3in]{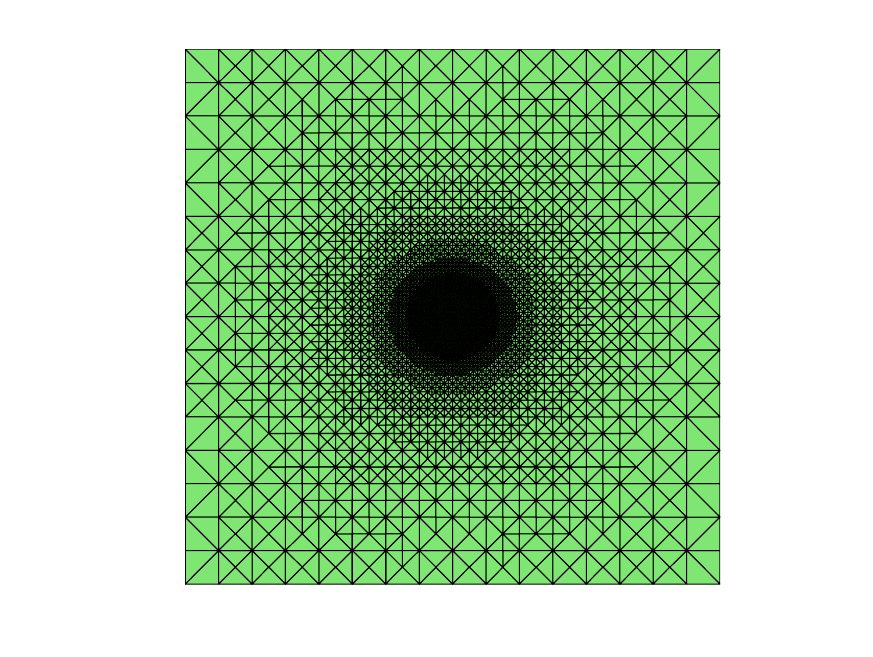}
		\end{minipage}%
	}%
	\centering
	\caption{\small{The initial mesh (left); Adaptively refined mesh after $k=18$ iterations(right)} for Example \ref{exa:2}.}\label{fig:AmWG.2}
\end{figure}

Figure \ref{fig:error2} shows the rate of $\ln\# \mathcal{T}_k$ v.s. $\ln \|A^{1/2}(\nabla u-\nabla_w u_{\mathcal{T}_k})\|_{\mathcal{T}_{k}}$ with different marking parameters $\theta =0.3, 0.5$ and $0.7$, where $\#\mathcal{T}_k$  and $u_{{k}}$ represent the number of elements and the corresponding solution, respectively, gotten from the Algorithm \ref{alg:AmWG}.

\begin{figure}[H]
	\centering
	\includegraphics[width=4in]{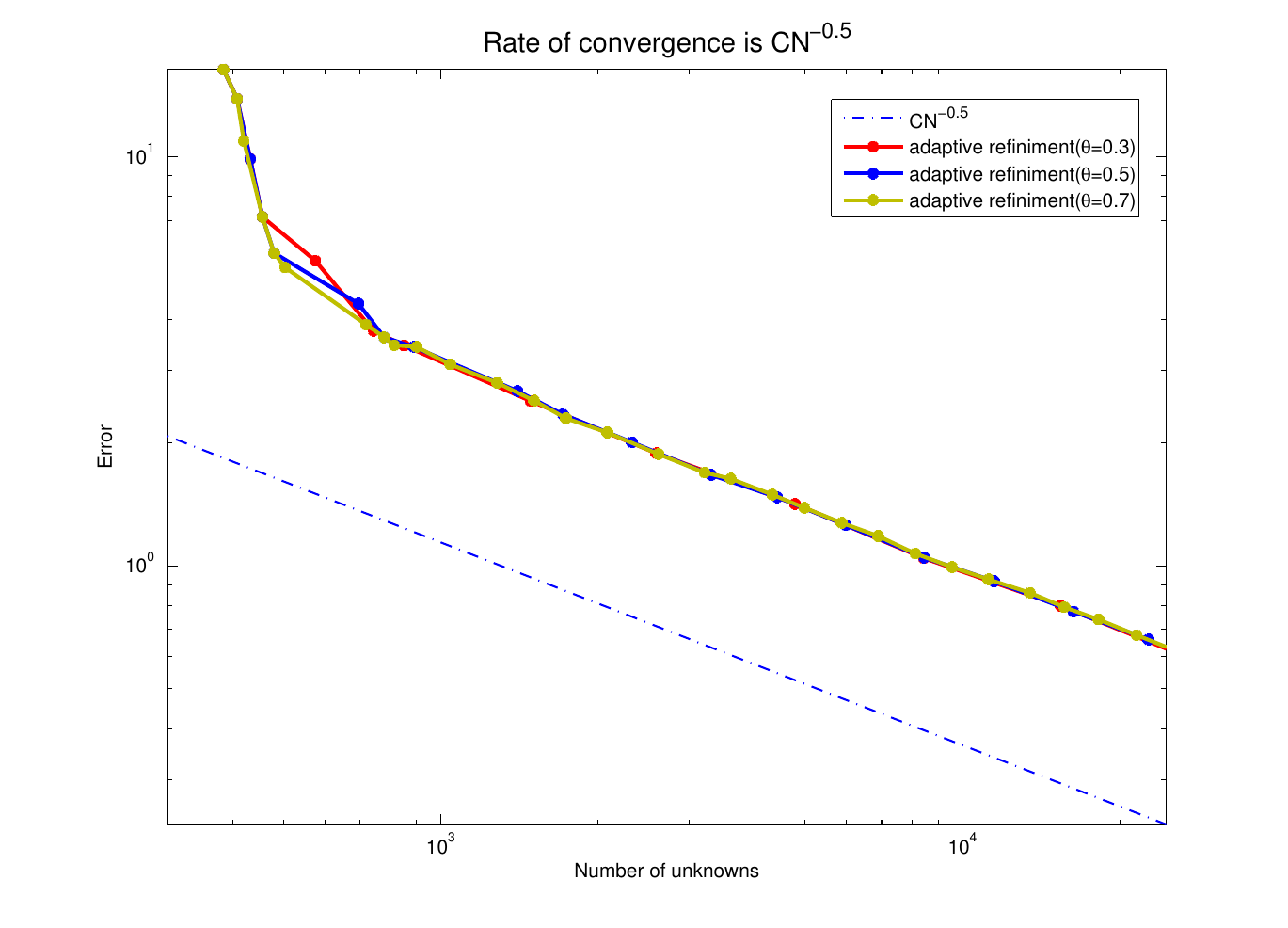}
	\caption{\small{Quasi-optimality of the adaptive mesh refinements with marking parameters $\theta=0.3, 0.5, 0.7$}.}\label{fig:error2}
\end{figure}

\begin{example}\label{exa:1}
In this example, we choose the L-shape domain $\Omega = (-1, 1)^2/([0, 1)\times(-1,0])$ and coefficient $A = \mathbf{I}$, the exact solution of \eqref{eq:pb-model} is $u(x, y)=r^{2/3}\sin(\frac{2\theta}{3})$. 
\end{example}

\RV{On the left of Figure \ref{fig:AmWG.1} shows the initial mesh $\mathcal{T}_0$ for Example \ref{exa:1};  on the right of Figure \ref{fig:AmWG.1} shows the refined mesh after $k=20$ iterations for the Example \ref{exa:1} with $\theta=0.7$.}

\begin{figure}[htbp]
	\centering
	\subfigure{
		\begin{minipage}[t]{0.4\linewidth}
			\centering
			\includegraphics[width=2.3in]{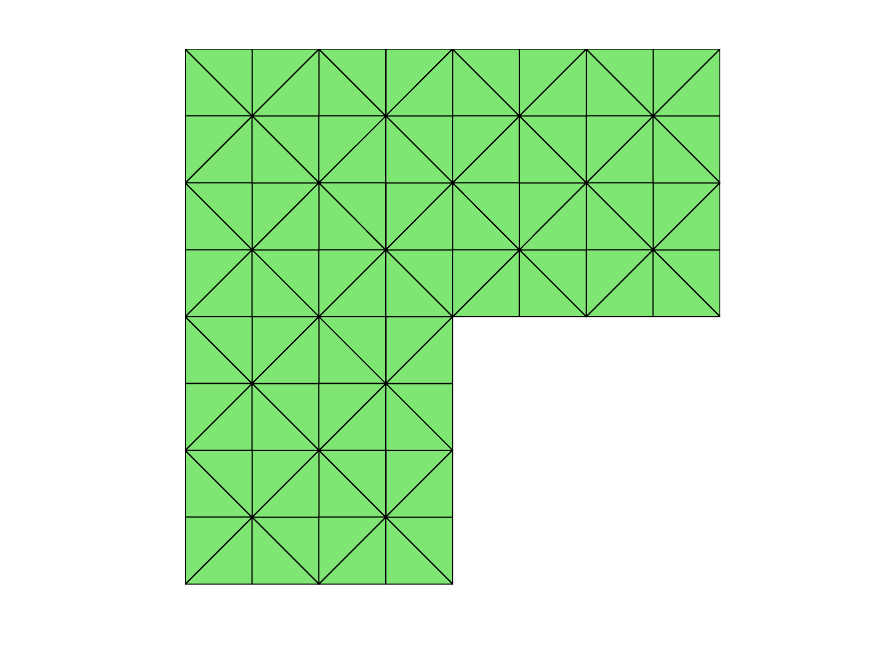}
	\end{minipage}}
	\subfigure{
		\begin{minipage}[t]{0.4\linewidth}
			\centering
			\includegraphics[width=2.3in]{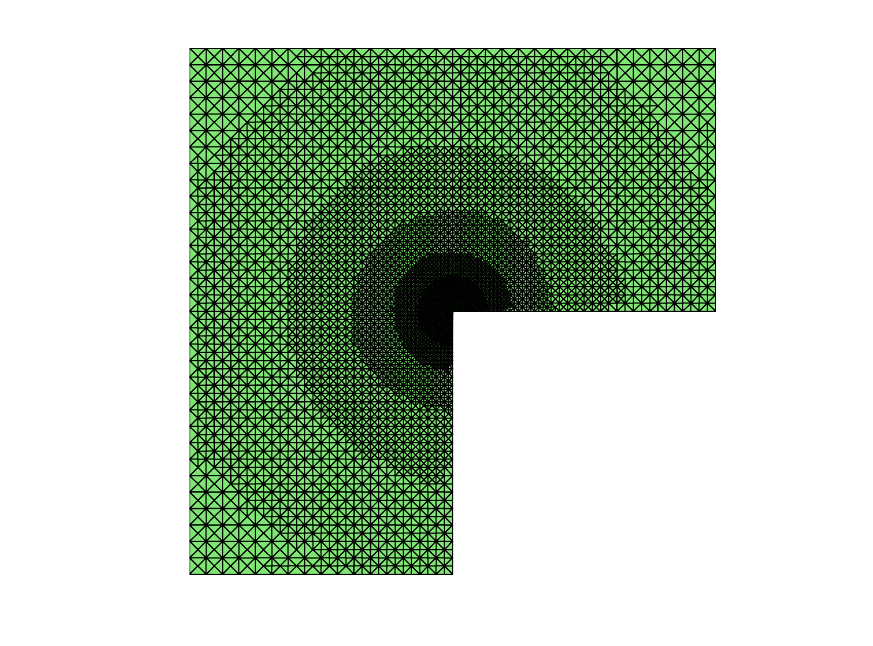}
		\end{minipage}%
	}%
	\centering
	\caption{\small{The initial mesh (left); Adaptively refined mesh after $k=20$ iterations(right)} for Example \ref{exa:1}.}\label{fig:AmWG.1}
\end{figure}

Figure \ref{fig:error} shows the rate of $\ln\# \mathcal{T}_k$ v.s. $\ln \|A^{1/2}(\nabla u-\nabla_w u_{\mathcal{T}_k})\|_{\mathcal{T}_{k}}$ with different marking parameters $\theta =0.3, 0.5$ and $0.7$, where $\#\mathcal{T}_k$  and $u_{{k}}$ represent the number of elements and the corresponding solution, respectively, gotten from the Algorithm \ref{alg:AmWG}.

\begin{figure}[htbp]
	\centering
	\includegraphics[width=4in]{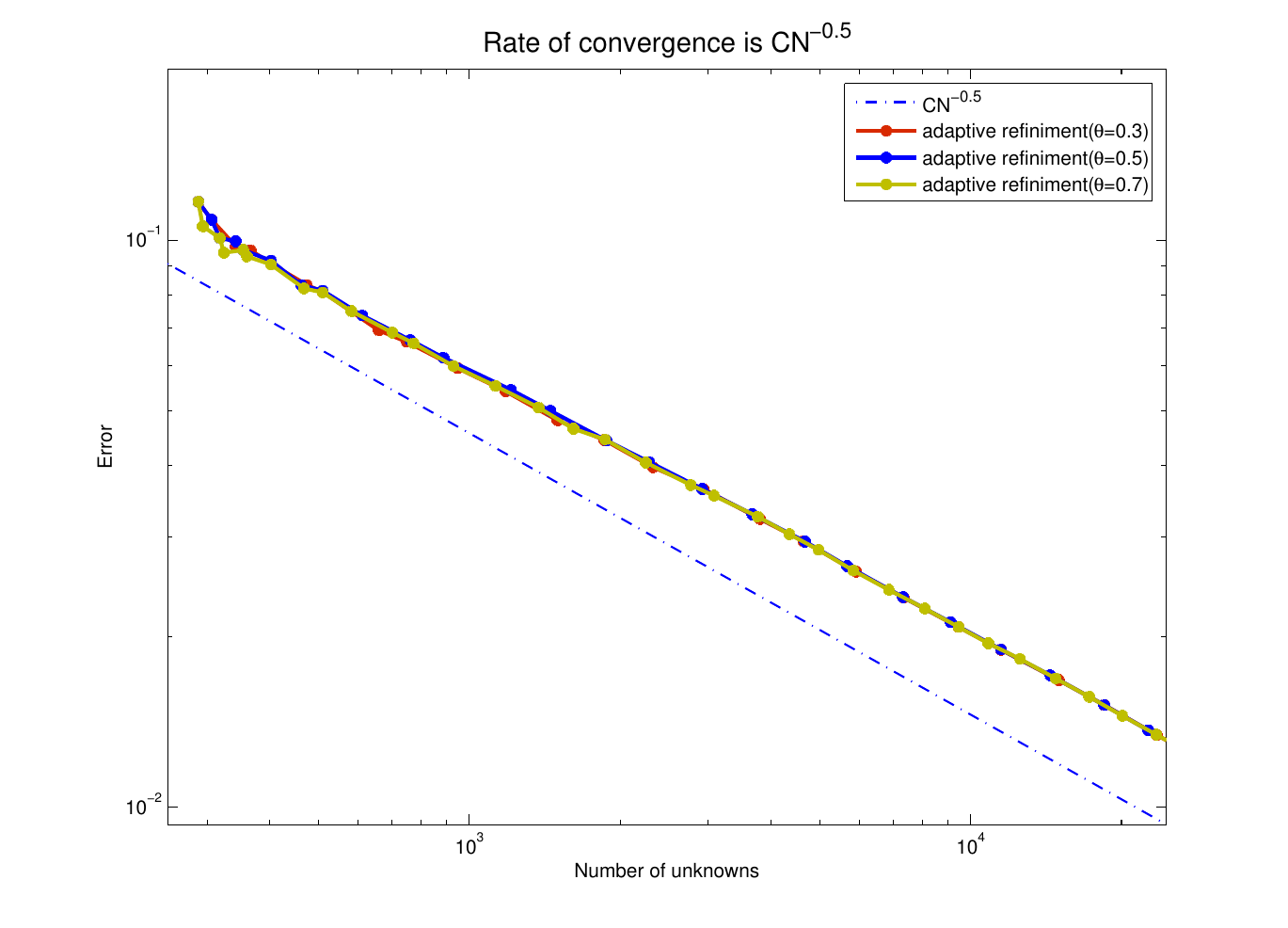}
	\caption{\small{Quasi-optimality of the adaptive mesh refinements with marking parameters $\theta=0.3, 0.5, 0.7$}.}\label{fig:error}
\end{figure}

The right one of Figures  \ref{fig:AmWG.2} and \ref{fig:AmWG.1} show that the mesh is locally refined. And the curves in Figures  \ref{fig:error2} and \ref{fig:error} indicate that the convergence and the quasi-optimality  of the 
Algorithm \ref{alg:AmWG}, namely
$$
\|A^{1/2}(\nabla u-\nabla_w u_{\mathcal{T}_k})\|_{\mathcal{T}_{k}} \leq C(\#\mathcal{T}_k)^{-1/2}.
$$


\begin{thebibliography}{10}

\bibitem{Alonso96:385}
A.~Alonso.
\newblock Error estimators for a mixed method.
\newblock {\em Numer. Math.}, 74(4):385--395, 1996.

\bibitem{Arnold:1982}
D.~N. Arnold.
\newblock An interior penalty finite element method with discontinuous
  elements.
\newblock {\em SIAM J. Numer. Anal.}, 19(4):742--760, 1982.

\bibitem{Ayuso-de-Dios;Lipnikov;Manzini:2016nonconforming}
B.~Ayuso~de Dios, K.~Lipnikov, and G.~Manzini.
\newblock The nonconforming virtual element method.
\newblock {\em ESAIM: Mathematical Modelling and Numerical Analysis (M2AN)},
  50(3):879--904, 2016.

\bibitem{2019BogrekWangSuperconvergence}
Betul Bogrek and Xiaoshen Wang.
\newblock Superconvergence of a modified weak galerkin approximation for second
  order elliptic problems by l2 projection method.
\newblock {\em Journal of Computational and Applied Mathematics}, 346:53--62,
  2019.

\bibitem{Bonito;Nochetto:2010Quasi-optimal}
A.~Bonito and R.~H. Nochetto.
\newblock Quasi-optimal convergence rate of an adaptive discontinuous
  {Galerkin} method.
\newblock {\em SIAM J. Numer. Anal.}, 48(2):734--771, 2010.

\bibitem{Cai;Cao:2015recovery-based}
Z.~Q. Cai and S.~H. Cao.
\newblock A recovery-based a posteriori error estimator for {$H(curl)$}
  interface problems.
\newblock {\em Computer Methods in Applied Mechanics and Engineering},
  296(1):169--195, 2015.

\bibitem{2022CaoWangWangnew}
Shuhao Cao, Chunmei Wang, and Junping Wang.
\newblock A new numerical method for div-curl systems with low regularity
  assumptions.
\newblock {\em Computers \& Mathematics with Applications}, 114:47--59, 2022.

\bibitem{Carstensen;Bartels;Jansche:2002posteriori}
C.~Carstensen, S.~Bartels, and S.~Jansche.
\newblock A posteriori error estimates for nonconforming finite element
  methods.
\newblock {\em Numer. Math.}, 92(2):233--256, 2002.

\bibitem{CasconKreuzer08:2524}
J.~M. Cascon, C.~Kreuzer, R.~H. Nochetto, and K.~G. Siebert.
\newblock Quasi-optimal convergence rate for an adaptive finite element method.
\newblock {\em SIAM J. Numer. Anal.}, 46(5):2524--2550, 2008.

\bibitem{Chen2009ifem}
L~Chen.
\newblock {iFEM}: an integrated finite element methods package in {MATLAB}.
\newblock {\em University of California at Irvine}, 2009.

\bibitem{Chen;Holst;Xu:2009}
L.~Chen, M.~Holst, and J.~C. Xu.
\newblock Convergence and optimality of adaptive mixed finite element methods.
\newblock {\em Math. Comp.}, 78(265):35--53, 2009.

\bibitem{Chen;Wang;Ye:2014posteriori}
L.~Chen, J.~P. Wang, and X.~Ye.
\newblock A posteriori error estimates for weak {Galerkin} finite element
  methods for second order elliptic problems.
\newblock {\em J. Sci. Comput.}, 59(2):496--511, 2014.

\bibitem{2015ChenWangWangEtAlauxiliary}
Long Chen, Junping Wang, Yanqiu Wang, and Xiu Ye.
\newblock An auxiliary space multigrid preconditioner for the weak galerkin
  method.
\newblock {\em Computers \& Mathematics with Applications}, 70(4):330--344,
  2015.

\bibitem{Chen;Dai:2002efficiency}
Z.~M. Chen and S.~B. Dai.
\newblock On the efficiency of adaptive finite element methods for elliptic
  problems with discontinuous coefficients.
\newblock {\em SIAM Journal on Scientific Computing}, 24(2):443--462, 2002.

\bibitem{Ciarlet;Dunkl;Sauter:2018family}
P.~Ciarlet, C.~F. Dunkl, and S.~A. Sauter.
\newblock A family of {Crouzeix-Raviart} finite elements in {3D}.
\newblock {\em Analysis and Applications}, 16(5):649--691, 2018.

\bibitem{Ciarlet:1978Finite}
P.~G. Ciarlet.
\newblock {\em The Finite Element Method for Elliptic Problems}, volume~4 of
  {\em Studies in Mathematics and its Applications}.
\newblock North-Holland Publishing Co., 1978.

\bibitem{Ciarlet;Ciarlet;Sauter;Simian:2016Intrinsic}
P.~G. Ciarlet, P.~Ciarlet, S.~A. Sauter, and C.~Simian.
\newblock Intrinsic finite element methods for the computation of fluxes for
  {Poisson's} equation.
\newblock {\em Numerische Mathematik}, 132(3):433--462, 2016.

\bibitem{Crouzeix;Raviart:1973}
M.~Crouzeix and P.~A. Raviart.
\newblock Conforming and nonconforming finite element methods for solving the
  stationary {Stokes} equations {I}.
\newblock {\em Revue fran{\c{c}}aise dautomatique informatique recherche
  op{\'e}rationnelle. Math{\'e}matique}, 7(R3):33--75, 1973.

\bibitem{2021CuiYeZhangmodified}
Ming Cui, Xiu Ye, and Shangyou Zhang.
\newblock A modified weak galerkin finite element method for the biharmonic
  equation on polytopal meshes.
\newblock {\em Communications on Applied Mathematics and Computation},
  3(1):91--105, 2021.

\bibitem{Dari;Duran;Padra;Vampa:1996posteriori}
E.~Dari, R.~Duran, C.~Padra, and V.~Vampa.
\newblock A posteriori error estimators for nonconforming finite element
  methods.
\newblock {\em Mathematical Modelling and Numerical Analysis}, 30(4):385--400,
  1996.

\bibitem{Dorfler:1996convergent}
W.~D{\"o}rfler.
\newblock A convergent adaptive algorithm for {Poisson's} equation.
\newblock {\em SIAM Journal on Numerical Analysis}, 33(3):1106--1124, 1996.

\bibitem{Du;Zhang:2017}
Y.~Du and Z.~M. Zhang.
\newblock A numerical analysis of the weak {Galerkin} method for the
  {Helmholtz} equation with high wave number.
\newblock {\em Commun. Comput. Phys.}, 22(1):133--156, 2017.

\bibitem{Gao;Wang;2014}
F.~Z. Gao and X.~S. Wang.
\newblock A modified weak galerkin finite element method for a class of
  parabolic problems.
\newblock {\em J. Comput. Appl. Math.}, 271(1):1--19, 2014.

\bibitem{Girault;Raviart:1986Finite}
V.~Girault and P.-A. Raviart.
\newblock {\em Finite Element Methods for Navier-Stokes Equations: Theory and
  Algorithms}.
\newblock Springer-Verlag, 1986.

\bibitem{2022GuoShengWangEtAlmodified}
Liming Guo, Qiwei Sheng, Cheng Wang, and Ziping Huang.
\newblock A modified weak galerkin finite element method for nonmonotone
  quasilinear elliptic problems.
\newblock {\em Journal of Computational and Applied Mathematics}, 406:113928,
  2022.

\bibitem{Hu;Xu;2013}
J.~Hu and J.~C. Xu.
\newblock Convergence and optimality of the adaptive nonconforming linear
  element method for the stokes problem.
\newblock {\em J. Sci. Comput.}, 55(1):125--148, 2013.

\bibitem{Hu;Mu;Ye:2019}
X.~Z. Hu, L.~Mu, and X.~Ye.
\newblock A weak {Galerkin} finite element method for the {Navier-Stokes}
  equations.
\newblock {\em J. Comput. Appl. Math.}, 362:614--625, 2019.

\bibitem{Huang;Xu;2013}
J.~G. Huang and Y.~F. Xu.
\newblock Convergence and complexity of arbitrary order adaptive mixed element
  methods for the {Poisson} equation.
\newblock {\em Sci. China Math.}, 55(5):1083--1098, 2012.

\bibitem{2022HussainWangAlTaweelstudy}
Saqib Hussain, Xiaoshen Wang, and Ahmed Al-Taweel.
\newblock A study of mixed problem for second order elliptic problems using
  modified weak galerkin finite element method.
\newblock {\em Journal of Computational and Applied Mathematics}, 401:113770,
  2022.

\bibitem{2016LiXieBPX}
Binjie Li and Xiaoping Xie.
\newblock Bpx preconditioner for nonstandard finite element methods for
  diffusion problems.
\newblock {\em SIAM Journal on Numerical Analysis}, 54(2):1147--1168, 2016.

\bibitem{2018LiChenHuangnew}
Guanrong Li, Yanping Chen, and Yunqing Huang.
\newblock A new weak galerkin finite element scheme for general second-order
  elliptic problems.
\newblock {\em Journal of Computational and Applied Mathematics}, 344:701--715,
  2018.

\bibitem{2022LiChenHuangrobust}
Guanrong Li, Yanping Chen, and Yunqing Huang.
\newblock A robust modified weak galerkin finite element method for
  reaction-diffusion equations.
\newblock {\em Numer. Math. Theor. Meth. Appl}, 15:68--90, 2022.

\bibitem{Li;Mu;Ye:2019}
H.G. Li, L.~Mu, and X.~Ye.
\newblock A posteriori error estimates for the weak {Galerkin} finite element
  methods on polytopal meshes.
\newblock {\em Comm. Comput. Phys.}, 26(2):558--578, 2019.

\bibitem{Liu;Li;Chen:2018}
X.~Liu, J.~Li, and Z.~X. Chen.
\newblock A weak galerkin finite element method for the {Navier-Stokes}
  equations.
\newblock {\em J. Comp. Appl. Math.}, 333:442--457, 2018.

\bibitem{Mu:2019}
L.~Mu.
\newblock Weak {Galerkin} based a posteriori error estimates for second order
  elliptic interface problems on polygonal meshes.
\newblock {\em J. Comput. Appl. Math.}, 361:413--425, 2019.

\bibitem{2013MuWangWangEtAlcomputational}
L.~Mu, J.~P. Wang, Y.~Q. Wang, and X.~Ye.
\newblock A computational study of the weak {Galerkin} method for second-order
  elliptic equations.
\newblock {\em Numer. Algorithms}, 63(4):753--777, 2013.

\bibitem{Mu;Wang;Wei;Zhao:2013}
L.~Mu, J.~P. Wang, G.~W. Wei, and S.~Zhao.
\newblock Weak {Galerkin} methods for second order elliptic interface problems.
\newblock {\em Int. J. Numer. Anal. Model.}, 250:106--125, 2013.

\bibitem{Mu;Wang;Ye:2014biharmonic}
L.~Mu, J.~P. Wang, and X.~Ye.
\newblock Weak {Galerkin} finite element methods for the biharmonic equation on
  polytopal meshes.
\newblock {\em Numer. Methods Partial Differential Equations},
  30(3):1003--1029, 2014.

\bibitem{Mu;Wang;Ye:2015Galerkin}
L.~Mu, J.~P. Wang, and X.~Ye.
\newblock {A weak Galerkin finite element method with polynomial reduction}.
\newblock {\em J. Comp. Appl. Math.}, 285:45--58, 2015.

\bibitem{Mu;Wang;Ye:2015Helmholtz}
L.~Mu, J.~P. Wang, and X.~Ye.
\newblock A new weak {Galerkin} finite element method for the {Helmholtz}
  equation.
\newblock {\em IMA J. Numer. Anal.}, 35(3):1228--1255, 2015.

\bibitem{Mu;Wang;Ye:2015}
L.~Mu, J.~P. Wang, and X.~Ye.
\newblock Weak {Galerkin} finite element methods on polytopal meshes.
\newblock {\em Int. J. Numer. Anal. Model.}, 12(1):31--53, 2015.

\bibitem{Mu;Wang;Ye;Zhang:2014biharmonic}
L.~Mu, J.~P. Wang, X.~Ye, and S.~Y. Zhang.
\newblock A {$C^{0}$}-weak {Galerkin} finite element method for the biharmonic
  equation.
\newblock {\em J. Sci. Comput.}, 30(3):473--495, 2014.

\bibitem{Mu;Wang;Ye;Zhao:2014}
L.~Mu, J.~P. Wang, X.~Ye, and S.~Zhao.
\newblock A numerical study on the weak {Galerkin} method for the {Helmholtz}
  equation.
\newblock {\em Commun. Comput. Phys.}, 15(5):1461--1479, 2014.

\bibitem{2015MuWangYeEtAlweak}
Lin Mu, Junping Wang, Xiu Ye, and Shangyou Zhang.
\newblock A weak galerkin finite element method for the maxwell equations.
\newblock {\em Journal of Scientific Computing}, 65(1):363--386, 2015.

\bibitem{NochettoSiebert09:409}
R.~H. Nochetto, K.~G. Siebert, and A.~Veeser.
\newblock {\em Multiscale, Nonlinear and Adaptive Approximation}, chapter
  Theory of adaptive finite element methods: an introduction, pages 409--542.
\newblock Springer, 2009.

\bibitem{Owens:2014Quasi-Optimal}
L.~Owens.
\newblock Quasi-optimal convergence rate of an adaptive weakly over-penalized
  interior penalty method.
\newblock {\em J. Sci. Comput.}, 59(2):309--333, 2014.

\bibitem{Petzoldt:2002posteriori}
M.~Petzoldt.
\newblock A posteriori error estimators for elliptic equations with
  discontinuous coefficients.
\newblock {\em Adv. Comput. Math.}, 16(1):47--75, 2002.

\bibitem{Shahbazi:2005explicit}
K.~Shahbazi.
\newblock An explicit expression for the penalty parameter of the interior
  penalty method.
\newblock {\em J. Comput. Phys.}, 205(2):401--407, 2005.

\bibitem{2017ShieldsLiMachorroWeak}
Sidney Shields, Jichun Li, and Eric~A Machorro.
\newblock Weak galerkin methods for time-dependent maxwell’s equations.
\newblock {\em Computers \& Mathematics with Applications}, 74(9):2106--2124,
  2017.

\bibitem{2022TangEtAlmodified}
M.~Tang, L.~Q. Zhong, and Y.~Y. Xie.
\newblock A modified weak {Galerkin} method for h(curl)-elliptic problem.
\newblock {\em Comput. Math. Appl.}, 2022.

\bibitem{Tian;Zhai;Zhang:2018}
T.~Tian, Q.~L. Zhai, and R.~Zhang.
\newblock A new modified weak {Galerkin} finite element scheme for solving the
  stationary {Stokes} equations.
\newblock {\em J. Comput. Appl. Math.}, 329:268--279, 2018.

\bibitem{Verfurth:2013}
R.~Verf{\"u}rth.
\newblock {\em A review of a posteriori error estimation and adaptive mesh
  refinement techniques}.
\newblock Wiley Teubner, Chichester and Newyork, 1996.

\bibitem{Wang;Ye:2013}
J.~P. Wang and X.~Ye.
\newblock A weak {Galerkin} finite element method for second-order elliptic
  problems.
\newblock {\em J. Comp. Appl. Math.}, 241:103--115, 2013.

\bibitem{Wang;Ye:2014Galerkin}
J.~P. Wang and X.~Ye.
\newblock A weak {G}alerkin mixed finite element method for second order
  elliptic problems.
\newblock {\em Math. Comp.}, 83(289):2101--2126, 2014.

\bibitem{2014WangMalluwawaduGaoEtAlmodified}
X.~Wang, N.~S. Malluwawadu, F.~Gao, and T.~C. McMillan.
\newblock A modified weak {Galerkin} finite element method.
\newblock {\em J. Comput. Appl. Math.}, 271:319--327, 2014.

\bibitem{2023WangMengZhangEtAlmodified}
Xiuli Wang, Xianglong Meng, Shangyou Zhang, and Huifang Zhou.
\newblock A modified weak galerkin finite element method for the linear
  elasticity problem in mixed form.
\newblock {\em Journal of Computational and Applied Mathematics}, 420:114743,
  2023.

\bibitem{2022XieEtAlConvergence}
Y.~Y Xie, L.~Q. Zhong, and Y.~P. Zeng.
\newblock Convergence of an adaptive modified {WG} method for second-order
  elliptic problem.
\newblock {\em Numer. Algorithms}, 90(2):789--808, 2022.

\bibitem{Zeng;Chen;Wang:2017}
Y.~P. Zeng, J.~R. Chen, and F.~Wang.
\newblock Convergence analysis of a modified weak {Galerkin} finite element
  method for {Signorini} and obstacle problems.
\newblock {\em Numer. Methods Partial Differential Equations},
  33(5):1459--1474, 2017.

\bibitem{Zhang;Chen:2018}
T.~Zhang and Y.~L. Chen.
\newblock A posteriori error analysis for the weak {Galerkin} method for
  solving elliptic problems.
\newblock {\em Int. J. Comput. Methods}, 15(8):1850075, 2018.

\bibitem{Zhang;Lin:2017posteriori}
T.~Zhang and T.~Lin.
\newblock A posteriori error estimate for a modified weak galerkin method
  solving elliptic problems.
\newblock {\em Numer. Methods for Partial Differential Equations},
  33(1):381--398, 2017.

\bibitem{Zheng;Xie2017}
X.~B. Zheng and X.~P. Xie.
\newblock A posteriori error estimator for a weak {Galerkin} finite element
  solution of the {Stokes} problem.
\newblock {\em East Asian Journal on Applied Mathematics}, 7(3):508--529, 2017.

\end{thebibliography}
\end{document}